\theoremstyle{plain}
\newtheorem{thm}{Theorem}[section]
\newtheorem{cor}[thm]{Corollary}
\newtheorem{prop}[thm]{Proposition}
\newtheorem{conj}[thm]{Conjecture}
\newtheorem{theoremalph}{Theorem}
\theoremstyle{definition}
\newtheorem{exl}[thm]{Example}
\newtheorem{defn}[thm]{Definition}
\newtheorem{quest}[thm]{Question}
\newtheorem{prob}[thm]{Problem}
\newtheorem{rem}[thm]{Remark}
\DeclareMathOperator{\lk}{lk}
\DeclareMathOperator{\Sig}{\Sigma}
\DeclareMathOperator{\C}{\mathcal{C}}
\DeclareMathOperator{\N}{\mathbb{N}}
\DeclareMathOperator{\Z}{\mathbb{Z}}
\DeclareMathOperator{\Q}{\mathbb{Q}}
\DeclareMathOperator{\Wh}{Wh}
\DeclareMathOperator{\M}{M}
\DeclareMathOperator{\Bl}{Bl}
\title{Homomorphism obstructions for satellite maps}
\author{Allison N.\ Miller}
\address{Department of Mathematics, Rice University, Houston, TX, United States}
\email{allison.miller@rice.edu}
\subjclass[2010]{57M25, 57N70}
\keywords{concordance, satellite knots, Casson-Gordon invariants.}
\begin{document}
\maketitle
\begin{abstract}
A knot in a solid torus defines a map on the set of (smooth or topological) concordance classes of knots in $S^3$.
This set admits a group structure, but a conjecture of Hedden suggests that satellite maps never induce interesting homomorphisms: we give new evidence for this conjecture in both categories.  First, we use Casson-Gordon signatures to give the first obstruction to a slice pattern  inducing a homomorphism on the topological concordance group,  constructing examples with every winding number besides $\pm 1$. 
We then provide subtle examples of satellite maps which map arbitrarily deep into the $n$-solvable filtration of \cite{Cochran-Orr-Teichner:1999-1}, act like homomorphisms on arbitrary finite sets of knots, and yet which still do not induce homomorphisms. 
 Finally, we verify Hedden's conjecture in the smooth category for all but one small crossing number satellite operator. \end{abstract}

\section{Introduction}

A knot $P$ in the solid torus $S^1 \times D^2$ defines a function on the set of knots in $S^3$ by the well-known satellite construction: 
given a knot $K$, let $i_K \colon S^1 \times D^2 \to \nu(K) \subseteq S^3$ be an identification of the standard solid torus with a 0-framed tubular neighborhood of $K$ and define $P(K)$ to be $i_K(P)$. 

\begin{figure}[h!]
$\begin{array}{ccc}
\begin{array}{c} \includegraphics[height=2.8cm]{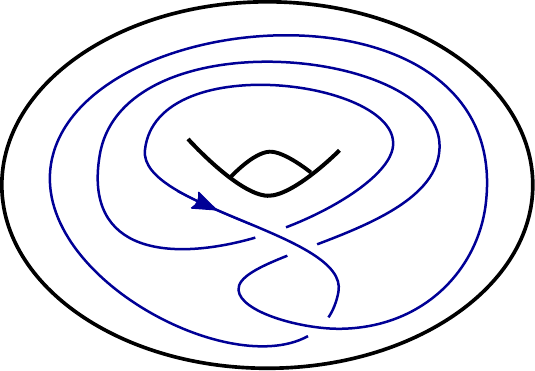} \end{array}
&\begin{array}{c} \includegraphics[height=2.8cm]{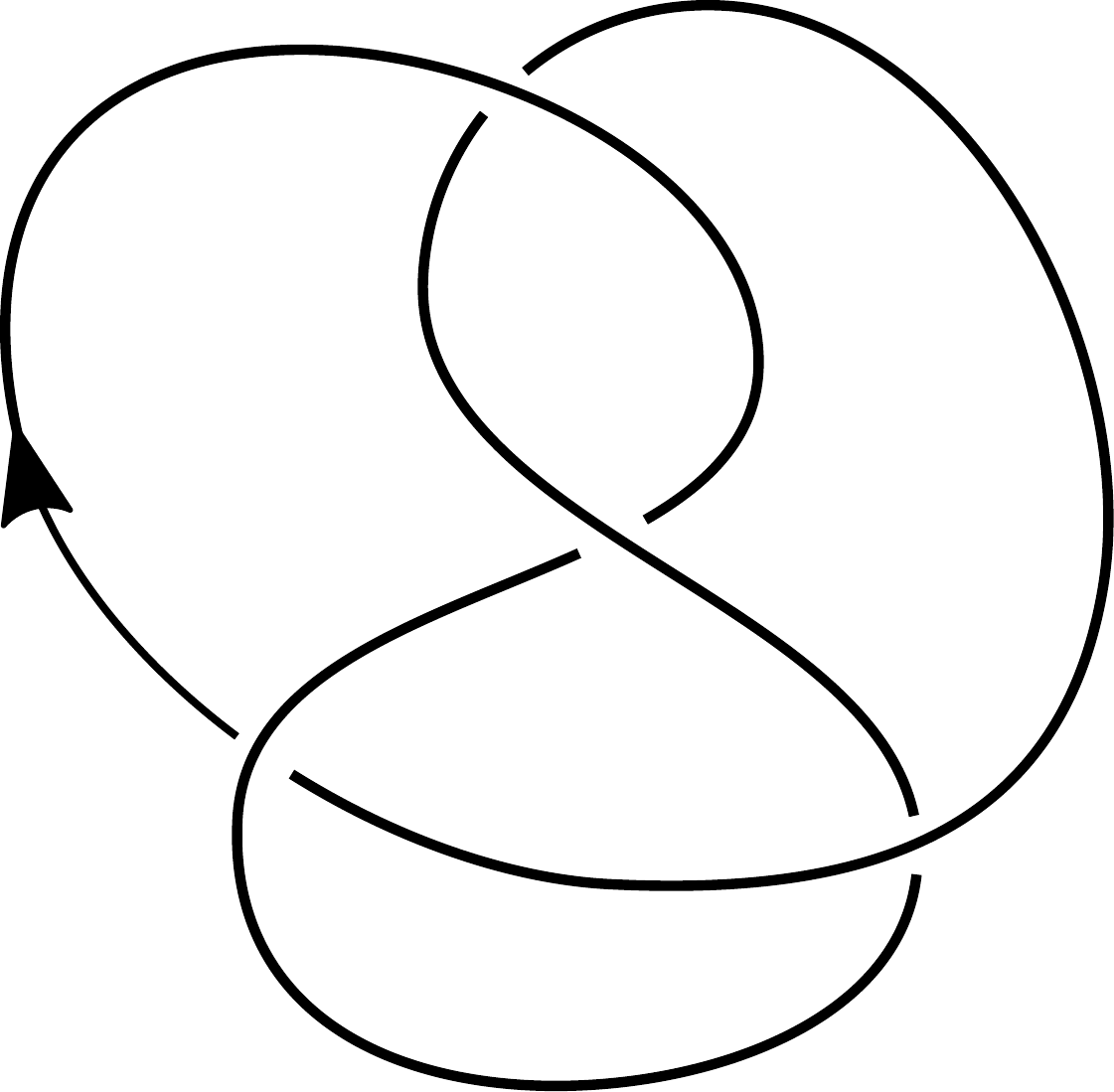} \end{array} &
\begin{array}{c} \includegraphics[height=2.8cm]{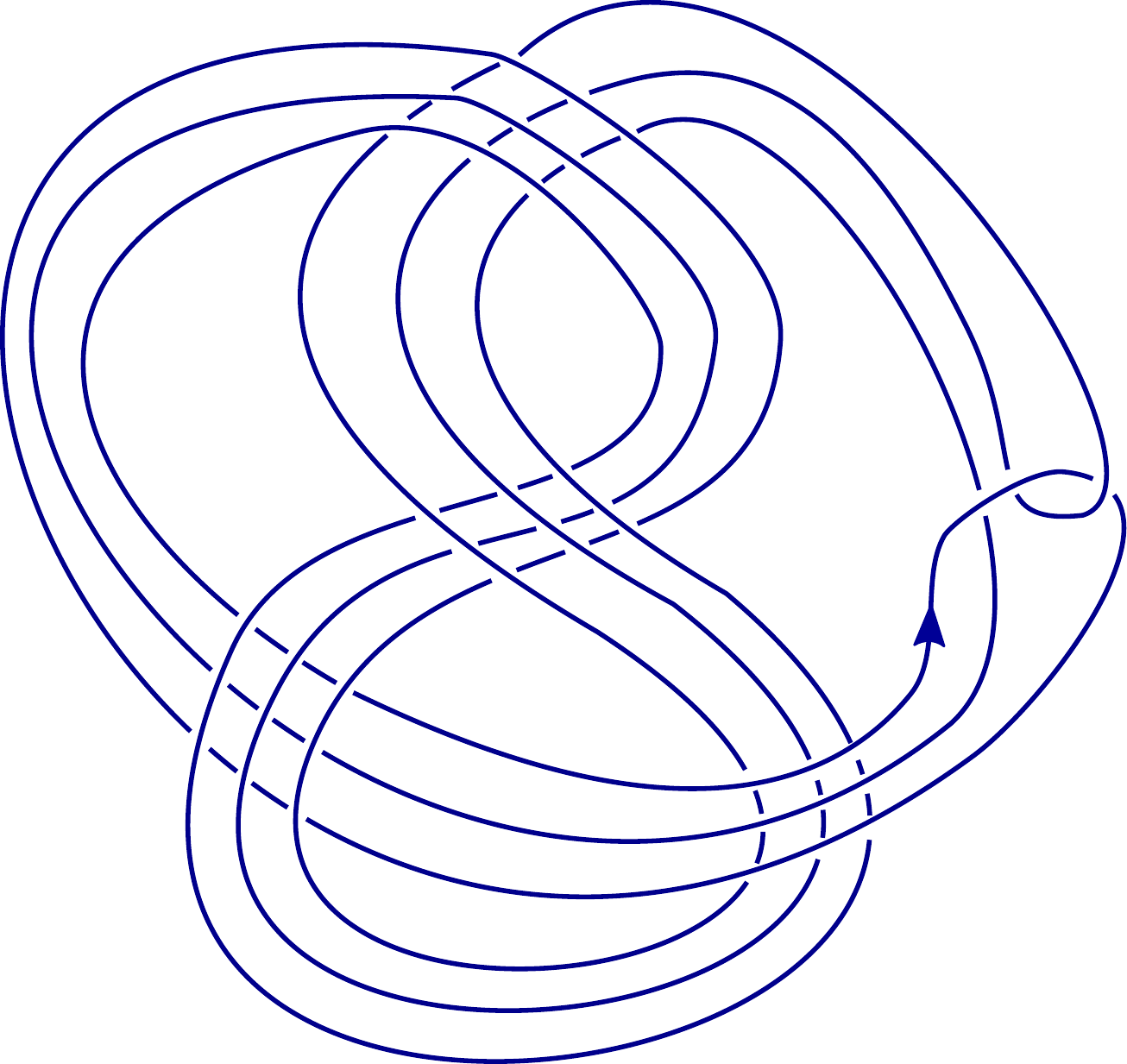} \end{array}
\end{array}$
\caption{A pattern $P$ (left) and companion knot $K$ (center) combine to give the satellite knot $P(K)$ (right).}
\end{figure}

The map $K \mapsto P(K)$ descends to a well-defined function on the collection of (smooth or topological) concordance classes of knots. These satellite maps are essential tools in the modern study of knot concordance and in 3- and 4-manifold topology more generally. To sample just a few results, the satellite construction features prominently in the first evidence for a fractal structure on concordance~\cite{Cochran-Harvey-Leidy:2011-02}; the first examples of  non-smoothly concordant knots with homeomorphic 0-surgeries~\cite{Yas15}; and the first knots in homology spheres which do not bound  PL discs in any contractible 4-manifold~\cite{Lev16}. As a result,  satellite operations have become an object of study in their own right, with recent work in the area focusing on the
 existence of interesting bijective satellite maps~\cite{GM95, MP17}, the behavior of the 4-genera of knots under satelliting~\cite{CH17,Piccirillo-shakegenus, Mil19, FellerMillerPC19}, and on satellite maps with image of infinite rank~\cite{HeddenPinzon}. 

Nonetheless, a fundamental question remains almost entirely open. The collection of concordance classes of knots famously has the structure of an abelian group, with addition induced by connected sum and the inverse operation induced by taking the mirror-reverse of a knot, and it is natural to ask how a satellite operator interacts with this additional structure.

\begin{quest}
When does a pattern induce a homomorphism of the concordance group?
\end{quest}

The following three \emph{standard} patterns evidently induce homomorphisms in both categories:
\[
\begin{array}{ccccc}
\includegraphics[height=2cm]{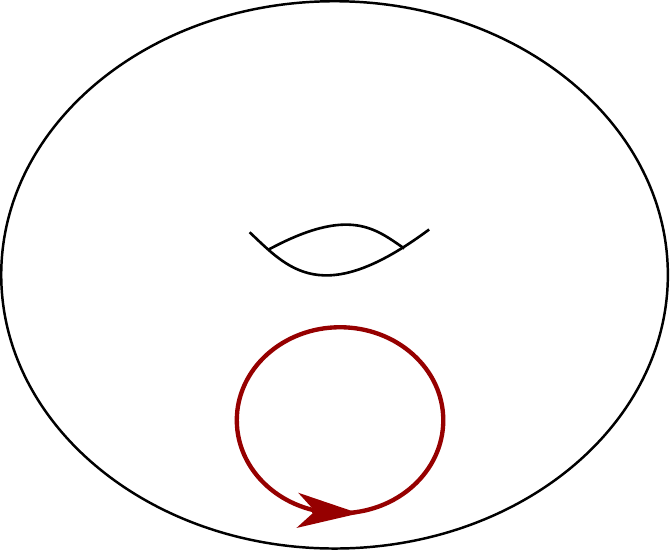} &&
\includegraphics[height=2cm]{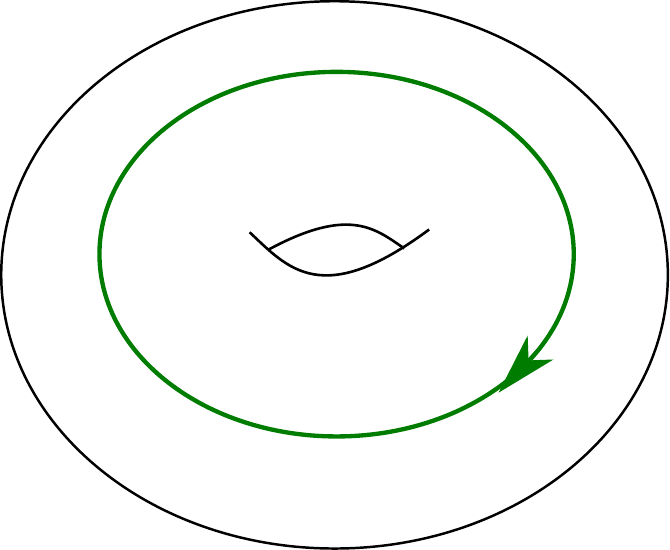} && 
\includegraphics[height=2cm]{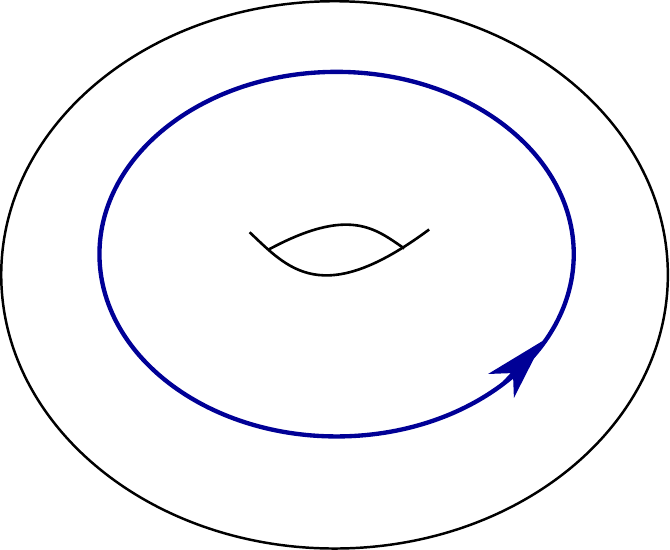} \\
K \mapsto U, && K \mapsto K, && K \mapsto K^r.
\end{array}
\]
 Hedden conjectured that these simple maps are the only homomorphisms  induced by satelliting. 

\begin{conj}[\cite{MPIM16, HeddenPinzon}]\label{conj:nohom}
Let $P$ be a pattern which induces a homomorphism on concordance. 
Then $P$ induces one of the three standard maps on concordance, i.e. the action of $P$ is given by one of
$[K] \mapsto [U]$, $[K] \mapsto [K]$, or $[K] \mapsto [K^{r}].$
\end{conj}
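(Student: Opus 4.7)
The plan is to attack the conjecture by dividing on the winding number $w = w(P)$ and using the homomorphism hypothesis itself to restrict the structure of $P$ in each regime. From $P$ inducing a homomorphism on $\C$ one immediately obtains that $P(U)$ is slice and that $[P(-K)] = -[P(K)]$ for every knot $K$, and combining these with additivity on connected sums quickly constrains how $P$ acts on Blanchfield forms and classical signatures. The target is a trichotomy: (i) if $w=0$, then $P$ must be the zero map $[K] \mapsto [U]$; (ii) if $|w|=1$, then $P$ must act as $[K]\mapsto [K]$ or $[K]\mapsto [K^r]$; and (iii) no pattern with $|w|\geq 2$ can induce a homomorphism.

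For (iii), I would generalize the paper's Casson-Gordon argument from slice patterns to arbitrary patterns. Given $P$ with $|w|\geq 2$, choose a companion $K$ whose double branched cover supports characters of large Casson-Gordon signature, and compare the Casson-Gordon invariants of $P(K\# K)$ with those of $P(K)\# P(K)$: the satellite/infection formula forces a discrepancy whose size grows with $|w|$, contradicting additivity. For (i), since $w=0$ kills the Alexander polynomial of every satellite, the obstruction must come from higher-order invariants; here I would pass to the $n$-solvable filtration of \cite{Cochran-Orr-Teichner:1999-1} and use $L^2$-signatures to produce, for each nontrivial winding-zero $P$, a companion $K$ for which $P(K)\# P(-K)$ has nonzero von Neumann $\rho$-invariant. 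For (ii), $P$ agrees with the identity or mirror-reverse on the Blanchfield form and on all classical signatures, so one iterates: combine Casson-Gordon with $\rho$-invariants at each level of $\{\mathcal{F}_n\}$ to detect any deviation from the standard patterns.

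The main obstacle is precisely the $|w|=1$ case. The paper's second family of examples shows that a putative counterexample $P$ can approximate the identity modulo arbitrarily deep levels of $\{\mathcal{F}_n\}$, can agree with the identity on any prescribed finite set of companions, and yet still fail to be a homomorphism. Any proof must therefore employ a concordance invariant that is stable under infinite towers of infections but is not visible at any finite level of $\{\mathcal{F}_n\}$; no such invariant is presently known in the topological category. The real work is either to construct one, or to reformulate the additivity hypothesis so that its failure becomes visible at a single, controllable stage of the filtration, at which point the strategies sketched above would combine to give a complete proof.
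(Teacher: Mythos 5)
The statement you are trying to prove is Conjecture~\ref{conj:nohom}, which is an open conjecture: the paper does not prove it, and only supplies partial evidence (Theorems~\ref{thm:topobstruction}--\ref{thm:smallhoms}), so there is no ``paper proof'' for your argument to be measured against. Your proposal is a strategy sketch rather than a proof, and each of its three branches has a genuine gap. For case (iii), the claim that a Casson--Gordon comparison of $P(K\# K)$ with $P(K)\# P(K)$ ``forces a discrepancy whose size grows with $|w|$'' is not true without hypotheses: Litherland's formula (Proposition~\ref{prop:Litherlandspecialcase}) makes the Casson--Gordon signatures of satellites behave \emph{additively} in many situations, which is exactly why the paper needs the metabolizer-versus-character analysis of Proposition~\ref{prop:strongobstruction}, and that analysis requires the lifts of $\eta$ to carry enough of $H_1(\Sigma_p(P(U)))$. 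The paper's own remark lists the failure modes ($\eta$ in the second derived subgroup, $\Delta_{P(U)}(t)=1$, winding number $\pm 1$), and it explicitly poses as an open problem whether the $(p,1)$-cable induces a homomorphism on $\C_t$ --- a winding number $p\geq 2$ pattern your case (iii) claims to dispose of. (Also note that the slice hypothesis is not something to ``generalize away'': if $P(U)$ is not slice then $P$ trivially fails to be a homomorphism, so the slice case is the whole problem.)

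For case (i), the $L^2$-signature/solvable-filtration machinery of \cite{Cochran-Harvey-Leidy:2011-02} likewise needs nondegeneracy input such as $\Bl(\eta,\eta)\neq 0$ (Theorem~\ref{thm:chl} and Corollary~\ref{cor:nonhomviapnk}), so it cannot produce the required companion $K$ for an arbitrary nontrivial winding number $0$ pattern; indeed no invariant is currently known that distinguishes every such $P$ from the zero map on $\C_t$ (the Whitehead pattern already induces the zero map topologically despite being nontrivial as a pattern). For case (ii) you essentially restate the conjecture and, as you acknowledge, the paper's Theorems~\ref{thm:nsolvhom} and~\ref{thm:homlike} show that any finite-level filtration argument of the kind you propose cannot close it: there exist non-homomorphism patterns that look like homomorphisms modulo arbitrarily deep terms of $\{\mathcal{F}_n\}$ and on arbitrary finite sets of companions, and for winding number $\pm 1$ there is currently no slice pattern known not to induce the identity on $\C_t$. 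So the proposal identifies the right tools but overstates what they can deliver in cases (i) and (iii), and it does not contain a proof in case (ii); as it stands it establishes nothing beyond what the paper's partial results already give.
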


We remark that it remains almost entirely open whether a pattern is determined up to concordance in $(S^1 \times D^2) \times I$ by its action on the concordance group. The exception is the winding number 0 case in the topological category, where we know for example that the Whitehead double pattern is not concordant to the trivial pattern but does induce the zero map on topological concordance. One might therefore hope to strengthen Conjecture~\ref{conj:nohom} to the statement that any pattern inducing a homomorphism must be concordant to a standard pattern, at least in the smooth category. 

We call a pattern $P$ \emph{slice} if $P(U)$ is a slice knot; this is an obvious prerequisite for a pattern to induce a homomorphism.  Perhaps surprisingly, any slice pattern induces a homomorphism of Levine's algebraic concordance group~\cite{Lith84}, 
and in particular `looks like' a homomorphism  from the perspective of classical invariants such as the Tristram-Levine signatures and Alexander polynomial.
Nonetheless, various  modern smooth technologies have been used to show that the simplest non-standard patterns--the Whitehead pattern \cite{Gompf:1986-1}, the Mazur pattern\footnote{This follows immediately from \cite{Lev16}, though we expect it was known to the experts for some time before.}, and the cable $C_{p,1}$ for $p>1$ \cite{Hedden09}--do not  induce  homomorphisms on the smooth concordance group. 

In this paper, we give new evidence for Conjecture~\ref{conj:nohom} in both the smooth and topological categories. In Section~\ref{ss:cg},  we use Casson-Gordon signatures to give the first obstruction to a slice pattern $P$ inducing a homomorphism on the topological concordance group. 

\begin{theoremalph}\label{thm:topobstruction}
Let $P$ be a slice pattern described by an unknot $\eta$ in the complement of $P(U)$ (i.e. $P=P(U) \subset S^3 \smallsetminus \nu(\eta) \cong S^1 \times D^2$). 
 Suppose that there exists some prime $p$ dividing the winding number of $P$ such that  the lifts of $\eta$ to $\Sigma_p(P(U))$ generate the nontrivial group $H_1(\Sigma_p(P(U)))$. 
Then $P$ does not induce a homomorphism on the topological concordance group. 
\end{theoremalph}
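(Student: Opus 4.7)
My plan is to combine Casson-Gordon signatures with Litherland's additivity formula for the behavior of these invariants under the satellite operation. Assume for contradiction that $P$ induces a homomorphism on the topological concordance group. Since $P(U)$ is slice, $P$ sends $[U] \mapsto [U]$, and so for any knot $K$ with concordance inverse $K^*$ we have $[P(K)] + [P(K^*)] = [P(K \# K^*)] = [U]$; equivalently, $L_K := P(K) \# P(K^*)$ is topologically slice for every $K$. My goal is to produce a single $K$ for which this fails.

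Set $A := H_1(\Sigma_p(P(U)))$ with linking form $\lambda$, and write $\tilde\eta_0, \ldots, \tilde\eta_{p-1}$ for the $p$ lifts of $\eta$ to $\Sigma_p(P(U))$ (there are $p$ of them because $p$ divides the winding number). By hypothesis these generate $A$, so any nonzero character on $A$ takes a nonzero value on some $\tilde\eta_i$. Because $P(U)$ is slice, there is a metabolizer $H_0 \subset A$ for $\lambda$ such that $\sigma(P(U), \chi) = 0$ for $\chi \in H_0^\perp$. The Casson-Gordon slice obstruction says: if $L_K$ is topologically slice, then there is a metabolizer $M \subseteq H_1(\Sigma_p(L_K))$ for the linking form such that the Casson-Gordon signature $\sigma(L_K, \chi) = 0$ for every prime-power-order character $\chi$ vanishing on $M$. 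Litherland's additivity formula gives, for a character $\chi$ with $(A \oplus A)$-restriction $(\chi_1, \chi_2)$,
\[
\sigma(L_K, \chi) = \sigma(P(U), \chi_1) + \sigma(P(U), \chi_2) + \sum_{i=0}^{p-1}\bigl[\sigma_{\chi_1(\tilde\eta_i)}(K) - \sigma_{\chi_2(\tilde\eta_i)}(K)\bigr],
\]
where $\sigma_\omega$ denotes the Tristram-Levine signature of $K$ at $\omega$.

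For a carefully chosen $K$---say a connected sum of torus knots whose Tristram-Levine signature function $\omega \mapsto \sigma_\omega(K)$ is large at prescribed prime-power roots of unity and vanishes elsewhere---I would, for each candidate metabolizer $M$, construct a prime-power-order character $(\chi_1, \chi_2) \in M^\perp \cap (H_0^\perp \oplus H_0^\perp)$ whose values on the lifts $\tilde\eta_i$ force the Tristram-Levine sum above to be nonzero. The generating hypothesis is crucial here: because the $\tilde\eta_i$ generate $A$, characters on $A$ have enough freedom in their values on the lifts to ensure the sum can be made nontrivial. The resulting nonzero Casson-Gordon signature would then contradict sliceness of $L_K$.

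The main obstacle is handling ``diagonal-type'' metabolizers $M_\phi = \{(a, \phi(a)) : a \in A\}$ arising as graphs of anti-isometries $\phi\colon (A, \lambda) \to (A, -\lambda)$. For characters $(\chi, -\chi \circ \phi^{-1}) \in M_\phi^\perp$, the Tristram-Levine sum reduces to $\sum_i [\sigma_{\chi(\tilde\eta_i)}(K) - \sigma_{\chi(\phi^{-1}(\tilde\eta_i))}(K)]$ (using $\sigma_{-\omega} = \sigma_\omega$), which vanishes identically if $\phi^{-1}$ happens to permute the set $\{\tilde\eta_i\}$. Ruling such metabolizers out requires using that $M$ is Lagrangian for the linking form on the \emph{full} $H_1(\Sigma_p(L_K))$---including the contributions from the $K$-infection regions---so that $M \cap (A \oplus A)$ is further constrained and the Casson-Gordon signatures of $K$ itself enter the obstruction via the extended Litherland formula. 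Making this algebraic argument precise, and tracking the interplay between $\phi$ and the lifts of $\eta$, will be the most delicate part of the proof.
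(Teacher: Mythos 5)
Your strategy (Casson--Gordon signatures plus Litherland's satellite formula) is the right general framework, but the specific test you propose cannot be pushed through from the hypotheses of Theorem~\ref{thm:topobstruction}, and the obstacle you yourself flag is fatal rather than ``delicate.'' You only use the consequence that $L_K=P(K)\# P(K^*)$ is slice, and when $p$ divides the winding number Litherland's theorem (Proposition~\ref{prop:Litherlandspecialcase}) gives a canonical \emph{linking-form-preserving} isomorphism $H_1(\Sigma_p(P(K)))\cong H_1(\Sigma_p(P(U)))$: the companion exteriors contribute nothing to first homology, so $H_1(\Sigma_p(L_K))$ with its linking form is exactly $(H,\lambda)\oplus(H,\lambda)$, independent of $K$. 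Hence your proposed escape --- extra constraints on $M$ coming from ``the $K$-infection regions'' --- does not exist, and a graph metabolizer $M_\phi$ whose anti-isometry (up to covering transformations and sign) permutes the lifts of $\eta$ kills the Tristram--Levine sum for every character vanishing on $M_\phi$ and every choice of $K$. Structurally, sliceness of $P(K)\#P(K^*)$ for all $K$ is precisely the pseudo-homomorphism property, and the paper exhibits non-standard patterns that do induce pseudo-homomorphisms, so an argument that only inputs this property cannot suffice without further work. A secondary over-constraint: you ask for characters in $M^\perp\cap(H_0^\perp\oplus H_0^\perp)$; a counting argument does not guarantee such characters exist nontrivially (and none is needed --- the paper simply bounds $|\sigma(P(U),\cdot)|$ by a constant $C$ and swamps it with large signatures).

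The paper's proof avoids all of this by testing the \emph{three-fold} sum $L=P(J)\#P(K)\#-P(J\#K)$, which is slice if $P$ is a homomorphism, so that the relevant group is $H\oplus H\oplus -H$. For any invariant metabolizer $M$, a counting argument with $q$-primary annihilators ($|A_q(M)|=q^{3k}$ and $|A_q(H\oplus 0\oplus 0)|=q^{4k}$ inside a group of order $q^{6k}$, and $3k+4k>6k$) produces a nonzero character $\chi=(\chi_1,\chi_2,\chi_3)$ vanishing on $M$ with $\chi_1=0$; since the lifts of $\eta$ generate $H$, some $\chi_2(z_i)$ or $\chi_3(z_i)$ is nonzero, so the three multiplicity sets differ --- this is exactly the hypothesis of Proposition~\ref{prop:strongobstruction}, and it is the third summand that makes the count work (in your two-summand setting $2k+2k=4k$ guarantees nothing) and neutralizes diagonal metabolizers. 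Then, instead of one knot, one takes two knots $J$ and $K$ whose Tristram--Levine signatures at $q$-th roots of unity form two rapidly growing, staggered hierarchies $m_j$ and $n_j$, so that the top index where the multiplicities $\delta_1,\delta_2,\delta_3$ differ dominates all lower terms and the constant $C_\chi$, forcing $\sigma(L,\alpha\circ\chi)\neq 0$ and contradicting Theorem~\ref{thm:cassongordon}. If you replace $L_K$ by this three-fold sum, drop the requirement that characters annihilate a metabolizer of $P(U)$, and adopt the two-knot signature hierarchy, your outline becomes the paper's argument.
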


In Section~\ref{ss:cgex} we give examples of patterns of every winding number besides $\pm1$ satisfying the conditions of Theorem~\ref{thm:topobstruction}, as well as examples of patterns obstructed from acting as homomorphisms by Proposition~\ref{prop:strongobstruction}, a stronger but harder to state version of Theorem~\ref{thm:topobstruction}.
  We remark that the outstanding case of winding number $\pm1$  seems quite difficult:  remarkably, there are no slice winding number 1 patterns that are known to not induce the identity on topological concordance! Moreover, (non)-existence of such patterns is closely related to longstanding questions such as the topological Akbulut-Kirby and homotopy ribbon conjectures (see~\cite{GM95, MP17}).

The $n$-solvable filtration of~\cite{Cochran-Orr-Teichner:1999-1} plays a central role in the current understanding of  topological concordance; while we omit a precise definition, knots that are $n$-solvable for large $n \in \mathbb{N}$ are `close' to being topologically slice. The Casson-Gordon style techniques of Theorem~\ref{thm:topobstruction} cannot obstruct satellite maps with image deep in the $n$-solvable filtration from inducing homomorphisms. However, we apply results of~\cite{Cochran-Harvey-Leidy:2011-02} to give many examples of patterns mapping arbitrarily deep in the filtration which do not induce homomorphisms.
\begin{theoremalph}\label{thm:nsolvhom}
For any $n \in \mathbb{N}$, there exist infinitely many patterns $P$ with $P(U)$ slice which do not induce homomorphisms on the topological concordance group and which have image contained within  $\mathcal{F}_n$, the collection of $n$-solvable knots.
\end{theoremalph}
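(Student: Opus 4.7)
The plan is to construct $P = P_n$ as an $n$-fold iterated infection operator in the style of Cochran-Harvey-Leidy \cite{Cochran-Harvey-Leidy:2011-02} and to detect failure of homomorphy via their higher-order $L^2$-signature machinery. For each $i = 0, \dots, n-1$, take a ribbon knot $R_i$ together with an unknot $\eta_i \subset S^3 \smallsetminus R_i$ chosen so that $\eta_i$ represents a class in $\pi_1(S^3 \smallsetminus R_i)^{(1)}$ on which the Blanchfield form of $R_i$ is nontrivial, with the Alexander polynomials of the $R_i$ pairwise strongly coprime. Each pair $(R_i,\eta_i)$ gives a winding-number-zero slice pattern $Q_i$ with $Q_i(U) = R_i$; the composition $P := Q_{n-1} \circ \cdots \circ Q_0$ is again a slice pattern, and by the $n$-solvability-raising property of such iterated operators \cite{Cochran-Harvey-Leidy:2011-02} the image of $P$ lies in $\mathcal{F}_n$. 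Infinitely many such $P$ arise by varying $R_0$ through an infinite family of ribbon knots with pairwise strongly coprime Alexander polynomials, so all that remains is to verify each resulting $P$ is not a homomorphism.

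To show this, it suffices to exhibit a companion $J$ with $P(J) \# P(-J)$ not topologically slice, since if $P$ were a homomorphism this sum would represent $[P(J \# -J)] = [P(U)] = 0 \in \C$. I would obstruct even $n.5$-solvability of $P(J) \# P(-J)$ using a von Neumann $\rho$-invariant $\rho(M,\phi)$, with $M$ the zero-surgery and $\phi$ a representation of $\pi_1(M)$ into a poly-torsion-free-abelian group $\Gamma$ of derived length $n+1$ canonically associated to the iterated infection structure. Following the CHL expansion of $\rho$ under iterated infection, the intrinsic contributions from the slice $R_i$ vanish, leaving an integral of classical Tristram-Levine signatures of $J$ (and of $-J$) weighted against measures on $S^1$ determined by the Blanchfield tower of the intermediate satellites. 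Taking $J$ to be a large connected sum of $(2,2k+1)$-torus knots with $k$ large makes $|\rho_0(J,\chi)|$ exceed the Cheeger-Gromov universal bound $C_{R_{n-1}}$ for every character $\chi$ appearing in the formula.

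The main obstacle is non-cancellation: naively $\sig_{-J} = -\sig_J$ and so the $J$- and $-J$-sides of the connect sum appear to sum to zero. The resolution is that the top-level representation $\phi$ does not split cleanly across the connect sum---its restrictions to the $P(J)$- and $P(-J)$-summands are determined by the internal Blanchfield data of each side---and choosing the $\eta_i$ so that the pairs $(R_i,\eta_i)$ are non-amphichiral as Blanchfield pairs forces the measures against which $\sig_J$ and $\sig_{-J}$ are integrated to differ between the two sides. Executing the bookkeeping---tracking how $\phi$ restricts through every level of infection on each summand and how the Blanchfield asymmetry propagates up the tower---is the chief technical burden, and once in place it forces $\rho(M,\phi) \neq 0$, completing the argument.
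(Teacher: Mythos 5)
Your construction of the patterns themselves is close to the paper's: iterated winding-number-zero infection operators built from slice knots $R_i$ with infection curves $\eta_i$ lying in the first derived subgroup and having $\Bl(\eta_i,\eta_i)\neq 0$, with the solvability containment coming from the raising theorem of Cochran--Harvey--Leidy (Theorem~\ref{thm:chl}). Two small points there: that theorem only raises solvability once the input already lies in some $\mathcal{F}_k$, so for arbitrary companions you need a base case (the paper uses that the innermost satellite is genus one and algebraically slice, hence $1$-solvable by \cite{DMOP16}; alternatively, winding number zero forces Arf $=0$, i.e.\ $0$-solvability, at the cost of one level), and with $n$ operators you then land in $\mathcal{F}_{n-1}$ or $\mathcal{F}_n$ accordingly---a harmless off-by-one once the base case is supplied.

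The genuine gap is the non-homomorphism step. You propose to show $P(J)\#P(-J)$ is not slice via higher-order $\rho$-invariants, you correctly identify the cancellation problem ($\sigma_{-J}=-\sigma_J$), and you then defer its resolution to unexecuted ``bookkeeping'' driven by non-amphichirality of the pairs $(R_i,\eta_i)$. That deferred step is exactly the hard part, and it is not clear it can be carried out: the obstruction theory requires that for \emph{every} compatible choice of metabolizers---including the diagonal-type metabolizers of the summed linking/Blanchfield forms that mix the $P(J)$ and $P(-J)$ factors---some signature term survives, and on those metabolizers the contributions of $J$ and $-J$ really do cancel; ``non-amphichiral as Blanchfield pairs'' is a qualitative hypothesis that the CHL machinery cannot convert into a quantitative lower bound on $\rho$. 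The paper avoids this entirely by a different and much simpler mechanism: the operators are chosen, via the genus-one Seifert surface trick making $R_J(J)$ slice, so that the composite $P$ \emph{kills a specific companion} $K_0$ with $\rho_0(K_0)\neq 0$ (e.g.\ $P(T_{2,3})\sim U$). Then Theorem~\ref{thm:chl} is applied as a black box: since $\rho_0(mK_0)=m\rho_0(K_0)$ eventually exceeds the Cheeger--Gromov constant $C$, the knot $P(mK_0)$ is not in $\mathcal{F}_{n+1}$ and in particular not slice, whereas a homomorphism would force $P(mK_0)\sim \#^m P(K_0)\sim U$ (Corollary~\ref{cor:nonhomviapnk} and Proposition~\ref{prop:nsolvablenonhom}). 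Your patterns, being generic iterated infections, are not arranged to send any knot with nonvanishing $\rho_0$ to a slice knot, so this shortcut is unavailable to you; as written, the proposal does not establish that your $P$ fails to be a homomorphism.
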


We also consider the extent to which non-standard patterns can act like homomorphisms on subsets of the concordance group, proving the following. 
\begin{theoremalph}\label{thm:homlike}
Let $\{K_i\}_{i=1}^m$ be any finite collection of knots.
Then there exists a pattern $P$ that does not induce a homomorphism on the topological concordance group but which has the property that $P(K_i \# K_j)$ is smoothly concordant to $P(K_i) \# P(K_j)$ for all $1 \leq i, j \leq m$. 
\end{theoremalph}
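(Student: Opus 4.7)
My plan is to refine the iterated-infection construction underlying Theorem~\ref{thm:nsolvhom}. In that construction, one starts with a base slice pattern $P_0$ in $S^1 \times D^2$ and infects it by knots $J_\ell$ along unknotted curves $\eta_\ell$ in the exterior; the resulting pattern $P_{\vec{J}}$ is still slice, and its failure to induce a homomorphism on topological concordance is detected by higher-order $L^2$-$\rho$ invariants applied to the defect knot
\[D_P(K,L) := P(K \# L) \# -P(K) \# -P(L).\]
My strategy is to tune the data $(P_0, \eta_\ell, J_\ell)$ so that on each pair $(K_i, K_j)$ in the prescribed finite set the defect is tautologically smoothly slice, while a higher-order obstruction survives on some carefully chosen auxiliary pair.

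First, I would fix $P_0$ to be a standard (homomorphism-inducing) pattern, so that $D_{P_0}(K,L) = 0$ for all $K, L$. For any choice of infection data $(\eta_\ell, J_\ell)$, the standard satellite formulas express $D_{P_{\vec{J}}}(K,L)$ as a connected sum of copies of $\pm J_\ell$, indexed by the preimages of the $\eta_\ell$ in the various covers of $K$, $L$, and $K \# L$ used to build $P_{\vec{J}}(K)$, $P_{\vec{J}}(L)$, and $P_{\vec{J}}(K \# L)$.

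Next, given $\{K_i\}_{i=1}^m$, I would pair up the curves $\eta_\ell$ so that in each of the finitely many defects $D_{P_{\vec{J}}}(K_i, K_j)$ a paired index $(\ell,\ell')$ contributes summands $+J_\ell$ and $-J_{\ell'}$. Imposing $J_\ell = J_{\ell'}$ for each paired index makes each such contribution $J_\ell \# -J_\ell$, which is smoothly slice via the obvious product ribbon disc. This amounts to finitely many equalities on the tuple $\vec{J}$, so the constraints can be arranged, and under them every $D_{P_{\vec{J}}}(K_i, K_j)$ is visibly smoothly slice. Hence $P_{\vec{J}}(K_i \# K_j)$ is smoothly concordant to $P_{\vec{J}}(K_i) \# P_{\vec{J}}(K_j)$ for all $i,j$.

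Finally, I would adjoin one additional curve $\eta_0$ (with infecting knot $J_0$) chosen so that its preimages in the covers associated to $K_i$ and $K_i \# K_j$ bound disjoint discs, making $\eta_0$ contribute trivially to every $D_{P_{\vec{J}}}(K_i, K_j)$, while its preimages in the covers associated to an auxiliary pair $(A,B)$ contribute a single unpaired copy of $\pm J_0$. Taking $J_0$ with sufficiently large higher-order $L^2$-$\rho$ invariant, in the style of Theorem~\ref{thm:nsolvhom}, then forces $D_{P_{\vec{J}}}(A,B)$ to be topologically non-slice, obstructing $P_{\vec{J}}$ from inducing a homomorphism on topological concordance. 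I expect the main technical obstacle to be independence: one must ensure that the $\rho$-invariant detecting non-homomorphism-ness on $(A,B)$ is insensitive to the paired infection data used to enforce cancellation on $\{K_i\}$. This should be resolvable via the mixed-coefficient and localization arguments of~\cite{Cochran-Harvey-Leidy:2011-02}, by which $L^2$-$\rho$ invariants can be tuned to see only the isolated $J_0$ contribution.
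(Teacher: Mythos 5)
There is a genuine gap at the heart of your construction. Your argument hinges on the claim that ``standard satellite formulas'' express the defect knot $D_{P_{\vec J}}(K,L)=P(K\# L)\#-P(K)\#-P(L)$ as a connected sum of copies of $\pm J_\ell$, so that pairing up infection curves and setting $J_\ell=J_{\ell'}$ makes each defect ``visibly smoothly slice.'' No such decomposition of the \emph{knot} exists: infection/satellite operations do not distribute over connected sum, and the satellite construction does not involve covers at all (covers enter only when computing invariants). What is true is much weaker: certain invariants of $P(K)$ --- Tristram--Levine signatures, Casson--Gordon signatures via Proposition~\ref{prop:Litherlandspecialcase}, and the higher-order $\rho$-invariants of \cite{Cochran-Harvey-Leidy:2011-02} --- admit formulas in terms of signatures or $\rho_0$ of the infection knots, weighted by classes of lifts of the $\eta_\ell$. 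Cancellation of those contributions shows certain obstructions vanish on $D(K_i,K_j)$; it does not produce a slice disc, so the conclusion ``$P(K_i\# K_j)\sim P(K_i)\# P(K_j)$'' does not follow. The final step has the same problem in reverse: the classes of the lifts of an infection curve $\eta_0$ in the relevant covers/modules are determined by the pattern and are companion-independent (this is exactly the content of Litherland's identification in Proposition~\ref{prop:Litherlandspecialcase}, and the analogous statement holds in the \cite{Cochran-Harvey-Leidy:2011-02} setting), so you cannot arrange for $\eta_0$ to ``bound disjoint discs'' for the companions $K_i, K_i\# K_j$ but not for an auxiliary pair $(A,B)$. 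The companion only enters through its signature/$\rho_0$ contributions, not through switching the curve on and off.

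The paper's proof sidesteps all of this by making both sides of the desired concordance slice. The pattern $R_J$ has the special geometric feature that $R_J(U)$ \emph{and} $R_J(J)$ are smoothly slice (surger the pushed-in genus-one Seifert surface along the unknotted curve, respectively along the curve of type $-J\# J$). Iterating with companion-adapted infection knots, Proposition~\ref{prop:nsolvablenonhom} produces, for any finite list of knots, a single pattern $P$ sending every knot in the list (and $U$) to a smoothly slice knot. Applying this to $\{T_{2,3}\}\cup\{K_i\}\cup\{K_i\# K_j\}$ gives $P(K_i\# K_j)\sim U\sim P(K_i)\# P(K_j)$ tautologically, and Corollary~\ref{cor:nonhomviapnk} (via Theorem~\ref{thm:chl}) shows $P$ is not a homomorphism because $P(T_{2,3})$ is slice while $\rho_0(T_{2,3})\neq 0$ forces $P(nT_{2,3})$ to be non-slice for large $n$. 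Your proposal aims at something strictly stronger --- homomorphism-like behavior without trivializing the $K_i$ --- which is essentially the still-open Question~\ref{quest:subhom}; as written, the slicing step and the on/off mechanism for $\eta_0$ are not justified, and I see no way to repair them within the invariant-formula framework you invoke.
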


We remark that in particular one can choose $\{K_i\}_{i=1}^m$ to be any finitely generated 2-torsion subgroup of the concordance group, but it remains an interesting open question whether any non-standard pattern acts as a homomorphism when restricted to the subgroup $\{\#^n K\}_{n \in \Z}$ when $K$ represents an infinite order element of the concordance group.

We conclude by switching to the smooth category,  considering the 19 patterns which are presented by two-component links with at most 8 crossings, and almost completely verifying Hedden's conjecture in that setting. 
\begin{theoremalph}\label{thm:smallhoms}
Let $P$ be a pattern presented by a link $P(U) \cup\,  \eta$ with at most 8 crossings. Then $P$ does not induce a homomorphism on  the smooth concordance group, unless perhaps $P(U) \cup\, \eta =$ L8a9, where it is unknown even if $P$ acts by the identity. 
\end{theoremalph}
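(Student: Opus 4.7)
The plan is a finite case analysis over the $19$ candidate patterns. First I tabulate, for each two-component link $L$ in the Thistlethwaite table with at most $8$ crossings, the (at most two) candidate patterns obtained by picking one component to play the role of $\eta$, recording the winding number $w(P) = \lk(P(U), \eta)$ and a diagram of the knot $P(U) \subset S^3$. The Hopf link and two-component unlink present standard patterns and are set aside at the outset.

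The principal reduction is that any homomorphism of the smooth concordance group must send $[U]$ to itself, so $P$ can induce a homomorphism only if $P(U)$ is smoothly slice. For the large majority of the candidate patterns, $P(U)$ is a knot of small crossing number already known to be non-slice by classical invariants (the Fox--Milnor condition, a non-vanishing Tristram--Levine signature, or a $d$-invariant of its double branched cover), and these cases are settled immediately by an inspection of KnotInfo.

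For the remaining patterns, in which $P(U)$ is smoothly slice, I split by winding number. If $|w(P)| \geq 2$, I apply Theorem~\ref{thm:topobstruction}, or the stronger Proposition~\ref{prop:strongobstruction} as backup; the hypothesis that the lifts of $\eta$ to $\Sigma_p(P(U))$ generate $H_1(\Sigma_p(P(U)))$ for some prime $p \mid w$ reduces to a finite linear algebra computation from a Seifert matrix for $P(U)$ together with the linking data of $\eta$ with a Seifert surface for $P(U)$. For $w(P) = 0$ with $P(U)$ slice, the image of $P$ lies in the topologically slice knots, so I evaluate a smooth concordance invariant --- the Ozsv\'ath--Szab\'o $\tau$-invariant, the Rasmussen $s$-invariant, or a Heegaard Floer $d$-invariant of $\Sigma_2(P(K))$ --- on the satellite of a test knot such as $T_{2,3}$, to exhibit a failure of the relation $P(K) \# P(K^r) \sim P(U)$ required of any homomorphism. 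For the residual winding number $\pm 1$ cases, I use these same smooth invariants to detect $P$ failing to satisfy $P(K^r) = P(K)^r$ up to smooth concordance for a suitably chosen test knot.

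The main obstacle is the pattern presented by L8a9: this link has linking number $\pm 1$ and $P(U)$ the unknot, so Theorem~\ref{thm:topobstruction} is vacuous, and every smooth invariant applied to $P(K)$ for the simple test knots at hand turns out to be consistent with $P$ acting by the identity on smooth concordance. Consequently this pattern cannot be distinguished from the standard identity by present methods, and we record it as the uniquely undecided case.
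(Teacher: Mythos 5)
Your case analysis has a structural gap that swallows most of the theorem. For every one of the $19$ slice patterns presented by a prime two-component link with at most $8$ crossings, the knot $P(U)$ is in fact the \emph{unknot} (the first nontrivial slice knot already needs $6$ crossings, so there is no room in an $8$-crossing link for a nontrivial slice component together with a linking $\eta$). Consequently $H_1(\Sigma_p(P(U)))=0$ for every prime $p$, the hypothesis of Theorem~\ref{thm:topobstruction} (which requires that group to be \emph{nontrivial} and generated by the lifts of $\eta$) is never satisfied, and Proposition~\ref{prop:strongobstruction} is equally vacuous. This is not a peculiarity of L8a9, as your last paragraph suggests: it kills your entire $|w(P)|\geq 2$ branch, which is where the cables $C_{2,1}$, $C_{3,1}$, $C_{4,1}$ (L4a1, L6a3, L8a14) and several other patterns such as L6a1, L8a11--L8a13, L8a10, and L6a2 live. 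None of the topological (Casson--Gordon) obstructions of the paper apply to any small pattern, which is exactly why the paper's argument for Theorem~\ref{thm:smallhoms} is carried out entirely with smooth invariants: the $\tau$-formulas for $C_{p,1}$, $\M$, and $\Wh$, the crossing-change monotonicity of $\tau$ (Corollary~\ref{cor:pseudohom}), Legendrian satellite bounds via Ng--Traynor and Plamenevskaya (for L8a1b, L8a8, L8a10), and Proposition~\ref{prop:tauhom}, which says a homomorphism forces $\tau(P(K))=|w(P)|\,\tau(K)$.

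A second, related gap: your tests for the residual cases only detect failure of the \emph{pseudo}-homomorphism conditions $P(U)\sim U$ and $P(-K)\sim -P(K)$. The pattern L6a2 is slice and (negative) amphichiral, so it genuinely induces a pseudo-homomorphism and passes every test of the form ``$P(K)\#P(K^r)$ is not slice'' or ``$P(K^r)\not\sim P(K)^r$''; ruling it out requires the sharper constraint of Proposition~\ref{prop:tauhom} together with Hom's cabling formula, which your proposal never invokes (and in your decomposition this $w=3$ pattern would have been routed to the vacuous Casson--Gordon branch anyway). Finally, even in the $w=0$ and $w=\pm1$ branches where your plan is in the right spirit, ``evaluate $\tau$, $s$, or a $d$-invariant on a test satellite'' is not yet an argument: for these specific patterns one needs concrete computational inputs (Hedden's and Levine's formulas, crossing-change bounds, or the Legendrian $tb/rot$ estimates), and for L8a9 all of these return answers consistent with the identity, which is why that case is left open.
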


%

\section*{Acknowledgements}
This paper benefited greatly from thoughtful comments from Lisa Piccirillo and Mark Powell. 
We are also indebted to Wenzhao Chen for informing us of a result of Hartley on amphichiral patterns.
Some of the preparation of this paper was done while the author was supported by NSF grant DMS-1902880, and we gratefully acknowledge that support. 

\section*{Conventions and notation}

All manifolds are assumed to be compact and oriented. We use $\C_s$ to denote the smooth concordance group, $\C_t$ the topological concordance group, and $\C$ when our statements hold in either category. Unless otherwise stated, all patterns are assumed to be slice in the appropriate category. 

Given a pattern $P\colon S^1 \to S^1 \times D^2$, the class of $[P(S^1)]$ equals $m[S^1 \times \{*\}]$ in $H_1(S^1 \times D^2)$ for some $m \in \mathbb{Z}$. We call $m$ the \emph{algebraic winding number} of $P$ and write $w(P)=m$. Given a pattern $P$ with $w(p)=m$, the pattern $P^{rev}$ with reversed orientation has $w(P^{rev})=-m$ and  the property that $P^{rev}(K)$ is isotopic to $P(K)^{rev}$ for all knots $K$. In particular, $P$ induces a homomorphism of $\C$ if and only if $P^{rev}$ does. For convenience, we therefore restrict to patterns of positive winding number.

\section{A Casson-Gordon obstruction}\label{ss:cg}

Given a knot $K$ and prime power $n \in \mathbb{N}$, the first homology group $H:=H_1(\Sigma_n(K))$ of the  $n$-fold cyclic branched cover comes with some additional structure. First, there is a nondegenerate symmetric  form $\lambda \colon H \times H \to \Q/ \Z$ called the \emph{torsion linking form}. A \emph{metabolizer} for $(H, \lambda)$ is a subgroup $M \leq H$ such that $|M|^2= | H|$ and $\lambda|_{M \times M}=0$. There is a $\Z_n$ action on $H$ induced by the action of covering transformations on $\Sigma_p(K)$, and a metabolizer is called \emph{invariant} if this subgroup is set-wise preserved by the $\Z_n$-action. We remark that classical arguments (see also \cite{CG86}) imply that if $K$ is slice then  $H_1(\Sigma_n(K))$ must have an invariant metabolizer.

Our first obstruction to a pattern inducing a homomorphism comes from Casson-Gordon signature invariants.  We will not fully define these, noting only that to any knot $K$, prime power $p$, and map  $\chi \colon  H_1(\Sigma_p(K)) \to \Z_q$ of prime power order, there is an associated \emph{Casson-Gordon signature} $\sigma(K, \chi) \in \Q$ defined in terms of the Witt class of the twisted intersection form of some associated 4-manifold. 
Moreover, Casson-Gordon signatures obstruct topological sliceness as follows.
\begin{thm}[\cite{CG86}]\label{thm:cassongordon}
Suppose $K$ is a topologically slice knot. Then for every prime power $p$ there exists an invariant metabolizer $M \leq H_1(\Sigma_p(K))$  such that if $\chi$ is a prime power order character with $\chi|_M=0$ then 
$\sigma(K, \chi)=0.$
\end{thm}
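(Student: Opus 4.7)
The plan is to follow the classical Casson-Gordon strategy. Suppose $K$ bounds a topological locally flat slice disc $D \subset B^4$, and form the $p$-fold cyclic branched cover $\pi\colon V_p \to B^4$ branched along $D$. Then $\partial V_p = \Sigma_p(K)$, and the deck transformation group $\Z_p$ acts on $V_p$ extending the action on the boundary.

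The first step is to extract the invariant metabolizer. A transfer argument (or a direct Euler characteristic and Mayer-Vietoris computation exploiting the fact that $D$ is a disc) shows that $V_p$ is a rational homology ball, i.e.\ $H_*(V_p;\Q)\cong H_*(\mathrm{pt};\Q)$. Define $M := \ker\!\bigl(H_1(\Sigma_p(K);\Z) \to H_1(V_p;\Z)\bigr)$. Poincar\'e-Lefschetz duality applied to the pair $(V_p, \Sigma_p(K))$ yields the half-lives-half-dies identity $|M|^2 = |H_1(\Sigma_p(K))|$. Any two classes in $M$ annihilate one another under $\lambda$ because each bounds a 2-chain in $V_p$ and their mod-$\Z$ linking is computed by the algebraic intersection of such 2-chains. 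Finally $M$ is $\Z_p$-invariant, since the inclusion $\Sigma_p(K) \hookrightarrow V_p$ is equivariant.

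The second step is to show that $\sigma(K, \chi) = 0$ whenever $\chi$ has prime power order $q$ and $\chi|_M=0$. Here I would use Casson and Gordon's identification of $\sigma(K, \chi)$ as a signature defect of a twisted intersection form on a suitably chosen 4-manifold. Concretely, one modifies $V_p$ by attaching 2-handles along curves in $\Sigma_p(K)$ representing the obstruction to extending $\chi$ (absorbing resulting intersection-form contributions via $\pm 1$-framed blowups) to obtain a 4-manifold $W$ with $\partial W = \Sigma_p(K)$ over which $\chi$ extends to some $\tilde\chi\colon H_1(W)\to \Z_q$, while $W$ remains a rational homology ball up to controlled correction terms. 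The Casson-Gordon signature $\sigma(K, \chi)$ is then the difference $\sigma_{\tilde\chi}(W) - \sigma(W)$, where $\sigma_{\tilde\chi}(W)$ is the signature of the twisted $\mathbb{C}[\Z_q]$-intersection form on the cyclic cover of $W$ determined by $\tilde\chi$. Both signatures vanish because $W$ and its $\Z_q$-cover are rational homology balls, so $\sigma(K, \chi)=0$.

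The main technical obstacle is the character extension step: for a generic $\chi$ vanishing on $M$, there is no reason for $\chi$ to extend directly over $V_p$, since torsion in $H_1(V_p)$ can obstruct this. The careful bookkeeping required to convert $V_p$ into a 4-manifold $W$ over which $\chi$ extends while keeping the signature computation under control is the delicate heart of the original \cite{CG86} argument, and is where the prime-power hypotheses on both $p$ and $q$ become essential (they ensure that the relevant obstruction groups are $p$- or $q$-torsion and that the twisted covers behave well under the signature defect formalism).
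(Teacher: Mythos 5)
A preliminary remark: the paper does not prove this statement at all — it is quoted as a black box from Casson--Gordon \cite{CG86} (with Freedman--Quinn supplying the tools that make the argument run for locally flat slice discs) — so your sketch has to be measured against the original argument. Your first step has the right skeleton (take the $p$-fold cover $V_p$ of $B^4$ branched along the slice disc and set $M=\ker(H_1(\Sigma_p(K))\to H_1(V_p))$), but the justification at its one crucial point is not adequate: a transfer argument only identifies the $\Z_p$-invariant part of $H_*(V_p;\Q)$ with $H_*(B^4;\Q)$, and an Euler-characteristic count cannot exclude cancelling ranks in degrees $1$ and $2$. What is actually needed is Casson and Gordon's lemma that for $p$ a prime power the branched cover is a $\Z_p$-homology ball, proved by induction over the intermediate $p^k$-fold covers using that $t^{p^k}-1\equiv(t-1)^{p^k}$ mod $p$, so the deck transformation acts unipotently; this is false for general $p$ and is exactly where the prime-power hypothesis on $p$ enters. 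Granting that lemma, finiteness plus universal coefficients gives the rational homology ball, and your half-lives-half-dies metabolizer argument and the equivariance of the kernel are standard and fine.

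The second step is where your sketch departs from \cite{CG86} and has a genuine gap. Since $\chi$ kills $M$, it factors through the image of $H_1(\Sigma_p(K))$ in $H_1(V_p)$, and because $\Q/\Z$ is an injective $\Z$-module it extends to a character $\tilde\chi\colon H_1(V_p)\to\Z_{q^r}$ of possibly larger prime-power order; no $2$-handle attachments or blowups are required, and your proposed modification leaves unexplained precisely the thing it would have to control, namely the twisted intersection-form contributions of the added handles. With $(V_p,\tilde\chi)$ as the bounding pair, $\sigma(K,\chi)$ is the signature defect of $V_p$, and the remaining point — which ``rational homology ball'' alone does not give — is that the $\tilde\chi$-twisted rational homology of $V_p$ (equivalently, the nontrivial eigenspace signatures of the induced $q^r$-fold cover) vanishes; this is a second prime-power lemma of Casson--Gordon, and it is where the hypothesis on the order of $\chi$ really does its work, not merely in making ``obstruction groups torsion.'' One also needs the bookkeeping relating the invariant for the order-$q$ character to the one computed from its order-$q^r$ extension. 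In short: correct overall strategy, but the two places where the prime-power hypotheses act are exactly the places your sketch waves at rather than proves.
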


Theorem~\ref{thm:topobstruction} is a consequence of the following more general obstruction.

\begin{prop}\label{prop:strongobstruction}
Let $P$ be a pattern described by an unknot $\eta$ in the complement of $P(U)$. Let $p$ be a prime dividing the winding number of $P$,  let $H=H_1(\Sig_p(P(U)))$, and denote the first homology classes represented by the $p$ lifts of $\eta$ to $\Sig_p(P(U))$ by $z_1, \dots, z_p \in H$. 

Suppose that for every invariant metabolizer $M \leq H \oplus H \oplus -H$ there exists a character $\chi= (\chi_1 , \chi_2, \chi_3) \colon H \oplus H \oplus -H \to \Z_q$ with $q$ a prime power  and $\chi|_{M}=0$ such that
$
\{ \pm \chi_1(z_i)\}_{i=1}^p$, $\{ \pm \chi_2(z_i)\}_{i=1}^p$, and $\{ \pm \chi_3(z_i)\}_{i=1}^p $
are not identical when considered as sets with multiplicity. Then $P$ does not induce a homomorphism on $\C_t$. 
\end{prop}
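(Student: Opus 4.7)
The plan is to argue by contradiction: I assume $P$ induces a homomorphism on $\C_t$ and derive a Casson--Gordon obstruction to sliceness of a suitable ``difference'' knot. If $P$ is a homomorphism, then for any companions $K_1, K_2$ the knot
\[ J := P(K_1) \,\#\, P(K_2) \,\#\, -P(K_1 \# K_2) \]
is topologically slice, so Theorem~\ref{thm:cassongordon} provides, for the prime $p$ in the hypothesis, an invariant metabolizer $M' \leq H_1(\Sig_p(J))$ on which all prime-power order Casson--Gordon signatures vanish.

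Using the infection description of satellites, the three satellite pieces of $J$ contribute copies of $H$ to $H_1(\Sig_p(J))$ with linking-form signs $+,+,-$, giving
\[ H_1(\Sig_p(J)) \cong (H \oplus H \oplus -H) \oplus \mathcal{A}(K_1,K_2), \]
where $\mathcal{A}(K_1,K_2)$ records the Alexander-module contributions of $K_1, K_2,$ and $K_1 \# K_2$ to their branched covers. I would choose $K_1, K_2$ to have Alexander polynomials coprime to $|H|$, so that the two summands have coprime annihilators as linking-form modules; standard orthogonality then forces $M' = M \oplus M_{\mathcal{A}}$ for some invariant metabolizer $M$ of $H \oplus H \oplus -H$. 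Applying the hypothesis to $M$ produces a prime-power order character $\chi = (\chi_1, \chi_2, \chi_3)$ vanishing on $M$ with the non-identical multisets property, and extending $\chi$ by zero on $\mathcal{A}$ gives a character $\tilde\chi$ on $H_1(\Sig_p(J))$ that still vanishes on $M'$. Litherland's satellite formula for Casson--Gordon invariants, together with additivity of Tristram--Levine signatures under connected sum, then yields
\[ \sig(J, \tilde\chi) = D(\chi) + \sum_{a \in \Z_q} \bigl(n_1(a) - n_3(a)\bigr)\, \sig_{\zeta^a}(K_1) + \sum_{a \in \Z_q} \bigl(n_2(a) - n_3(a)\bigr)\, \sig_{\zeta^a}(K_2), \]
where $\zeta = e^{2\pi i / q}$, $n_k(a) = |\{i : \pm\chi_k(z_i) = a\}|$, and $D(\chi)$ depends only on $P(U)$ and $\chi$. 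The non-identical multisets condition is exactly what rules out $n_1 = n_3$ and $n_2 = n_3$ simultaneously, so at least one of the two coefficient vectors above is nonzero, and $\sig(J, \tilde\chi)$ is a nontrivial linear functional of the Tristram--Levine signature profiles of $(K_1, K_2)$.

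Since $H$ is finite, there are only finitely many invariant metabolizers $M$ of $H \oplus H \oplus -H$, hence only finitely many characters $\chi_M$ from the hypothesis and only finitely many associated linear functionals. To conclude, I would take $K_1, K_2$ as connected sums, with sufficiently large multiplicities, of a fixed finite family of auxiliary knots whose signature profiles span the relevant evaluations at the roots of unity $\zeta^a$; a linear-algebra argument then produces a single pair $(K_1, K_2)$ simultaneously realizing $\sig(J, \tilde\chi_M) \neq 0$ for every $M$, contradicting the existence of $M'$. The main obstacle is this last step: the homological splitting is standard given the coprimality hypothesis, but simultaneously realizing finitely many nonvanishing Casson--Gordon signatures demands a careful construction of auxiliary knots with controlled Alexander modules and prescribed signature jumps at the specified roots of unity.
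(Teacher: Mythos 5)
Your proposal follows essentially the same route as the paper's proof: assume $P$ induces a homomorphism, form $L = P(K_1)\,\#\,P(K_2)\,\#\,-P(K_1\# K_2)$, and defeat the Casson--Gordon sliceness criterion (Theorem~\ref{thm:cassongordon}) for every invariant metabolizer by combining the hypothesized characters with Litherland's satellite formula and additivity of Tristram--Levine signatures. Two points of comparison. First, the summand $\mathcal{A}(K_1,K_2)$ you introduce is actually trivial: because $p$ divides the winding number, the companions contribute nothing to the $p$-fold branched cover, and Litherland gives a canonical, invariant, linking-form-preserving isomorphism $H_1(\Sig_p(P(K)))\cong H_1(\Sig_p(P(U)))$ (Proposition~\ref{prop:Litherlandspecialcase}); this is exactly where the hypothesis $p \mid w(P)$ is used, so your coprime-Alexander-polynomial condition and the resulting splitting of metabolizers are unnecessary, and the realization problem reduces to prescribing signature values alone. (Your signature formula, which sees only Tristram--Levine signatures of the companions at powers of $e^{2\pi i/q}$ indexed by $\chi_k(z_i)$, is itself only valid in this $p \mid w$ situation, so it is worth being clear that the extra summand is zero rather than merely split off.) Second, the step you flag as the main obstacle is handled in the paper by a uniform choice made in advance rather than by hyperplane avoidance afterwards: bound $|\sigma(P(U),\chi)|$ by a constant $C$, then use Cha--Livingston to realize knots $J,K$ whose signatures $m_j, n_j$ at the relevant roots of unity grow so fast (each exceeding $3C$ plus $p$ times everything before it, with the $n_j$ dominating all the $m_j$) that for \emph{any} character with the non-identical multiset property the top nonvanishing coefficient forces $\sigma(L,\alpha\circ\chi)>0$; a single pair $(J,K)$ then works for every metabolizer simultaneously. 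Your alternative plan---fix one character per metabolizer (finitely many, since $H\oplus H\oplus -H$ is finite), observe each functional has a nonzero integer coefficient of absolute value at most $p$, and choose even signature profiles avoiding finitely many affine hyperplanes---is workable once you invoke the same realization theorem, and no control of Alexander modules is needed after the first point is taken into account.
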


\begin{proof}[Proof of Theorem~\ref{thm:topobstruction}, assuming Proposition~\ref{prop:strongobstruction}]
Let $P$ be as in the statement of Theorem~\ref{thm:topobstruction}  and let $H:=H_1(\Sigma_p(P(U)))$. 
Write $|H|=m^2$ for some $m>1$,  let  $q$ be a prime dividing $m$,  and let $k \in \N$ be maximal such that $q^k$ divides $m$.

For any subgroup $S$ of $G:= H \oplus H \oplus -H$, let 
$S_q$ denote the $q$ primary subgroup of $S$ and 
define the $q$-primary annihilator of $S$ to be 
\begin{align*}
A_q(S):= \{ \chi: H \oplus H \oplus -H \to \Z_{q^{6k}} \text{ such that } \chi|_{S}=0\}.
\end{align*}
Note that $|A_q(S)|=|(G/S)_q|= |G_q|/ |S_q|$. 
Now let $M \leq H \oplus H \oplus -H$ be a metabolizer for the linking form and observe that $|M_q|= q^{3k}= |A_{q}(M)|$. 
Let $H^{1}:= H \oplus 0 \oplus 0$, and note that since $|H^{(1)}_q|= q^{2k}$, we have that $|A_q(H^1)|= q^{4k}.$
Since $A_q(H^1)$ and $A_{q}(M)$ are both subgroups of $A_q(0)$, which has order $|G_q|=q^{6k}$, they must have non-zero intersection .

Let $\chi= (\chi_1, \chi_2, \chi_3)$ be a non-zero element of  $A_q(H^1) \cap A_{q}(M)$. 
Since  $\chi|_{H^1}=0$, we have that
$\{ \pm \chi_1(z_i)\}_{i=1}^p=\{0\}_{i=1}^p$. However, since $\chi$ is non-zero and by assumption the lifts of $\eta$ generate $H_1(\Sigma_p(P(U)))$, we must have either $\chi_2(z_i) \neq 0$ or $\chi_3(z_i) \neq 0$ for some $1 \leq i \leq p$. It follows that the sets 
$\{ \pm \chi_1(z_i)\}_{i=1}^p$, $\{ \pm \chi_2(z_i)\}_{i=1}^p$, and $\{ \pm \chi_3(z_i)\}_{i=1}^p $ are not identical, and Proposition~\ref{prop:strongobstruction} applies to show that $P$ does not induce a homomorphism. 
\end{proof}

While Theorem~\ref{thm:topobstruction} is a particularly simple condition to verify, 
Proposition~\ref{prop:strongobstruction} applies in a much broader range of settings,  for instance in Example~\ref{exl:composite}, when $P(U)$ is a composite knot with $H_1(\Sig_p(P(U)))$ a non-cyclic module. 

To prove Proposition~\ref{prop:strongobstruction},  we  will also need  the  following special case of Litherland's formula for the Casson-Gordon signatures of satellite knots.

\begin{prop}[\cite{Lith84}]\label{prop:Litherlandspecialcase}
Let $P$ be a pattern described by an unknot $\eta$ in the complement of $P(U)$. 
Let $p$ be a prime power dividing the winding number of $P$.
Then for any knot $K$, there is a canonical  covering transformation invariant, linking form preserving isomorphism $\alpha \colon H_1(\Sig_p(P(K))) \to H_1(\Sigma_p(P(U)))$ such that given any prime power order character $\chi \colon H_1(\Sigma_p(P(U))) \to \Z_q$, we have
\[\sigma(P(K), \alpha \circ \chi) = \sigma(P(U),\chi)+ \sum_{i=1}^p \sigma_K(\omega_q^{\chi(\eta_i)}),\text{ where } \omega_q= e^{2 \pi i/q}. 
\]
\end{prop}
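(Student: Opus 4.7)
The plan is to follow the strategy of Litherland's original proof~\cite{Lith84}, building bounding 4-manifolds for $\Sig_p(P(K))$ out of bounding 4-manifolds for $\Sig_p(P(U))$ together with $p$ copies of an ``infection cobordism'' carrying the data of $K$, and then decomposing the twisted and untwisted signatures according to this decomposition.

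First, I would establish the isomorphism $\alpha$ using the infection description of the satellite. Since $\eta$ is unknotted with exterior $S^1\times D^2$, the knot $P(K)$ is obtained from $P(U)$ by deleting $\nu(\eta)$ from $S^3$ and gluing in the exterior $E_K$ of $K$, matching a meridian-longitude pair of $\eta$ with a longitude-meridian pair of $K$. Because $p$ divides $\lk(\eta,P(U))=w(P)$, the curve $\eta$ has $p$ distinct disjoint lifts $\eta_1,\dots,\eta_p$ to $\Sig_p(P(U))$, and $\Sig_p(P(K))$ is obtained from $\Sig_p(P(U))$ by performing the infection by $K$ at each $\eta_i$ equivariantly. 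Infection does not change $H_1$ --- the gluing identifies the $H_1$-generator of the removed solid torus with that of $E_K$ --- so this produces the canonical isomorphism $\alpha\colon H_1(\Sig_p(P(K)))\to H_1(\Sig_p(P(U)))$, which is equivariant under the deck action by construction and preserves the linking form (which can be checked by comparing Bockstein data on the cobordism between these covers implicit in the construction below).

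Next, I would assemble bounding 4-manifolds and decompose their signatures. Choose $N$ so that some $W_U$ with $\partial W_U=N\cdot\Sig_p(P(U))$ carries an extension of $\chi$ to a $\Z_q$ representation of $\pi_1(W_U)$. Build $W_K$ with $\partial W_K=N\cdot\Sig_p(P(K))$ by gluing in, along each lifted toral neighborhood of $\eta_i$ appearing $N$ times in $\partial W_U$, a copy of a standard ``infection piece'' $V_K^{(i)}$: essentially, the complement of a pushed-in Seifert surface for $K$ in $D^4$, set up so that the meridian of $K$ on its new boundary is identified with the former $\eta_i$-longitude. The character extends to $W_K$ with value $\chi(z_i)$ on the meridian of the $i$-th copy of $K$. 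Novikov additivity (the gluing is along 3-manifolds on which the correction terms to signature additivity vanish) then gives
\[\sigma_{\tilde\chi}(W_K)-\sigma(W_K)\;=\;\bigl(\sigma_{\tilde\chi}(W_U)-\sigma(W_U)\bigr)+\sum_{i=1}^p\bigl(\sigma_{\tilde\chi|_{V_K^{(i)}}}(V_K^{(i)})-\sigma(V_K^{(i)})\bigr).\]
Dividing by $N$ recovers the claimed formula, provided the contribution of each $V_K^{(i)}$ is identified with $\sigma_K(\omega_q^{\chi(\eta_i)})$.

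The main obstacle is this last identification: the twisted signature defect of the infection cobordism must be shown to equal the Tristram--Levine signature of $K$ at $\omega_q^{\chi(\eta_i)}$. This is a classical Seifert-matrix computation carried out in~\cite{Lith84}: one recognizes the twisted intersection form of $V_K^{(i)}$ as (up to a stabilization that does not affect the signature defect) precisely the Hermitian form whose signature is by definition the Tristram--Levine signature at that root of unity. Once this identification is in hand, the single-infection setup considered here requires only checking that the divisibility hypothesis $p\mid w(P)$ matches Litherland's condition that $\eta$ lift cleanly to $p$ disjoint components, and the remainder of the proof is specializing~\cite{Lith84} to this case.
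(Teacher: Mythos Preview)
The paper does not supply a proof of this proposition: it is stated with attribution to Litherland~\cite{Lith84} and used as a black box in the proof of Proposition~\ref{prop:strongobstruction}. So there is no ``paper's own proof'' to compare your attempt against.

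That said, your sketch is a faithful outline of Litherland's argument. The three ingredients---(i) using $p\mid w(P)$ to ensure $\eta$ lifts to $p$ disjoint curves so that $\Sigma_p(P(K))$ is obtained from $\Sigma_p(P(U))$ by $p$ equivariant infections, giving the isomorphism $\alpha$; (ii) building a bounding $4$-manifold for (copies of) $\Sigma_p(P(K))$ by gluing infection pieces onto one for $\Sigma_p(P(U))$ and applying Novikov additivity; (iii) identifying the twisted signature defect of each infection piece with the Tristram--Levine signature $\sigma_K(\omega_q^{\chi(\eta_i)})$---are exactly the steps in~\cite{Lith84}. The places where your write-up is thin (the linking-form preservation, the vanishing of correction terms in Novikov additivity, and the Seifert-matrix identification in step (iii)) are genuine work in Litherland's paper, but you have correctly flagged them rather than glossed over them. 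If this were to stand as an independent proof you would need to fill those in; as a summary of the cited result it is accurate.
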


We remark that at first glance this result seems decidedly unhelpful in showing that $P$ does not induce a homomorphism.  Since $P$ is a slice pattern, many of the $\sigma(P(U), \chi)$ terms must vanish, leaving us with the formula
$\sigma(P(K), \alpha \circ \chi) = \sum_{i=1}^p \sigma_K(\omega_q^{\chi(\eta_i)}).$
Since the Tristram-Levine signatures are additive with respect to connected sum of knots, we see that in many cases
\begin{align*}
\sigma(P(K_1 \# K_2), \alpha \circ \chi) =\sum_{i=1}^p \sigma_{K_1}(\omega_q^{\chi(\eta_i)})+\sum_{i=1}^p \sigma_{K_2}(\omega_q^{\chi(\eta_i)})
=\sigma(P(K_1), \alpha \circ \chi)+\sigma(P(K_2), \alpha \circ \chi)).
\end{align*}
Nonetheless, we are able to prove Proposition~\ref{prop:strongobstruction} as follows.
\begin{proof}[Proof of Proposition~\ref{prop:strongobstruction}]
Let $P$ be as in the statement of the Proposition, and define 
\[C:= \max_{\chi: H_1(\Sig_p(P(U))) \to \Z_q}|\sigma(P(U), \chi) |.
\]

Inductively pick even integers $m_1, \dots,  m_{\lfloor q/2 \rfloor}$ such that 
$m_1>3C$ and $m_j> 3C + p m_{i-1}$ for $j>1$ and  even integers $n_1, \dots, n_{\lfloor q/2 \rfloor}$ such that $n_1> 3C+p m_{\lfloor q/2 \rfloor}$ and $n_j> 3C + p m_{\lfloor q/2 \rfloor} + p n_{j-1}$ for $j>1$. 
 Let $J$ and $K$ be knots such that $\sigma_J(\omega_q^j)= m_j$ and  $\sigma_K(\omega_q^j)=n_j$ for all $1 \leq j \leq \lfloor q/2 \rfloor$. This is possible by the proof of Theorem 1 of Cha-Livingston \cite{Cha-Livingston:2004}, see also \cite{Mil19} for a similar argument.

Now let $M$ be a metabolizer of $H_1(\Sigma_p(L))$. By \cite{Lith84}, there is a canonical, covering transformation invariant, linking form preserving identification  
\[ \beta: H_1(\Sigma_p(L)) \xrightarrow{\cong} H_1( \Sigma_p(P(U))) \oplus  H_1( \Sigma_p(P(U))) \oplus - H_1( \Sigma_p(P(U)).\]
Under this identification, $\beta(M)$ is an invariant metabolizer for $H \oplus H \oplus -H$. Now let $\chi= (\chi_1, \chi_2,\chi_3)$ be a character to $\Z_{q}$ vanishing on $\beta(M)$ such that the sets 
$A_1=\{ \pm \chi_1(z_i)\}_{i=1}^p$, $A_2=\{ \pm \chi_2(z_i)\}_{i=1}^p$, and $A_3=\{ \pm \chi_3(z_i)\}_{i=1}^p $ are not identical.
For $k=1,2,3$ and $1 \leq j \leq \lfloor q/2 \rfloor$, define 
\[ \delta_k(j):= \#\{1 \leq i \leq p : \chi_k(z_i) = \pm j \in \Z_q \}.\]

By Proposition~\ref{prop:Litherlandspecialcase}, we have that 
\begin{align*}
\sigma(L, \alpha \circ \chi)&= 
\sigma(P(J), \chi_1)+
\sigma(P(K), \chi_2)-
\sigma(P(K\#J), \chi_3) \\
&= C_\chi+ \sum_{i=1}^{p} \sigma_J(\omega_q^{\chi_1(z_i)})
+ \sum_{i=1}^{p} \sigma_K(\omega_q^{\chi_2(z_i)})
- \sum_{i=1}^{p} \sigma_{J \#K} (\omega_q^{\chi_3(z_i)}).  \\
&= C_\chi + \sum_{j=1}^{\lfloor q/2 \rfloor} (\delta_1(j)- \delta_3(j)) m_j + \sum_{j=1}^{\lfloor q/2 \rfloor} (\delta_2(j)- \delta_3(j)) n_j,
\end{align*}
where $C_\chi= \sigma(P(U), \chi_1)+
\sigma(P(U), \chi_2)-
\sigma(P(U), \chi_3).$ Note that $|C_\chi| \leq 3C$.

We split our argument into cases as follows. Suppose first that $A_2\neq A_3$. Let $j_0$ be the maximal $j$ with $\delta_2(j) \neq \delta_3(j)$ and assume for convenience that $\delta_2(j_0)>\delta_3(j_0)$.  (The argument for $\delta_3(j_0)>\delta_2(j_0)$ is exactly analogous).
Then
\begin{align*}
\sigma(L, \alpha \circ \chi)
&=  C_\chi + \sum_{j=1}^{\lfloor q/2 \rfloor} (\delta_1(j)- \delta_3(j)) m_j + \sum_{j=1}^{j_0-1} (\delta_2(j)- \delta_3(j)) n_j +(\delta_2(j_0)- \delta_3(j_0)) n_{j_0} \\
&\geq -3C- p m_{\lfloor q/2 \rfloor} - p n_{j_0-1}+n_{j_0}>0, \text{ as desired.}
\end{align*}

Now suppose that $A_2=A_3$ and hence that $A_1 \neq A_3$.  Let $j_0$ be the maximal $j$ with $\delta_1(j) \neq \delta_3(j)$ and as before assume for convenience that $\delta_1(j_0)>\delta_3(j_0)$. 
In this case, we have that 
\begin{align*}
\sigma(L, \alpha \circ \chi)&=  C_\chi + \sum_{j=1}^{j_0-1} (\delta_1(j)- \delta_3(j)) m_j +(\delta_1(j_0)- \delta_3(j_0)) m_{j_0}  \\
& \geq -3C -p m_{j_0-1}+ m_{j_0}>0, \text{ as desired.} \qedhere
\end{align*}
\end{proof}

\section{Examples of non-homomorphism satellite maps}\label{ss:cgex}

An easy way to guarantee that the conditions of Theorem~\ref{thm:topobstruction} are satisfied is to choose a slice knot $P(U)$ whose Alexander module is generated by the class of the winding number 0 curve $\eta$. One can then modify $\eta$ to get a pattern of any winding number.
 More specifically, let $P_n$ be the winding number $n$ pattern of Figure~\ref{fig:pn}, described by an unknot $\eta$ in the complement of $P_n(U)$. Observe that $P_n(U)=6_1$ is slice for all $n \in \mathbb{N}$. 
\begin{figure}[h!]
  \includegraphics[align=c,height=5cm]{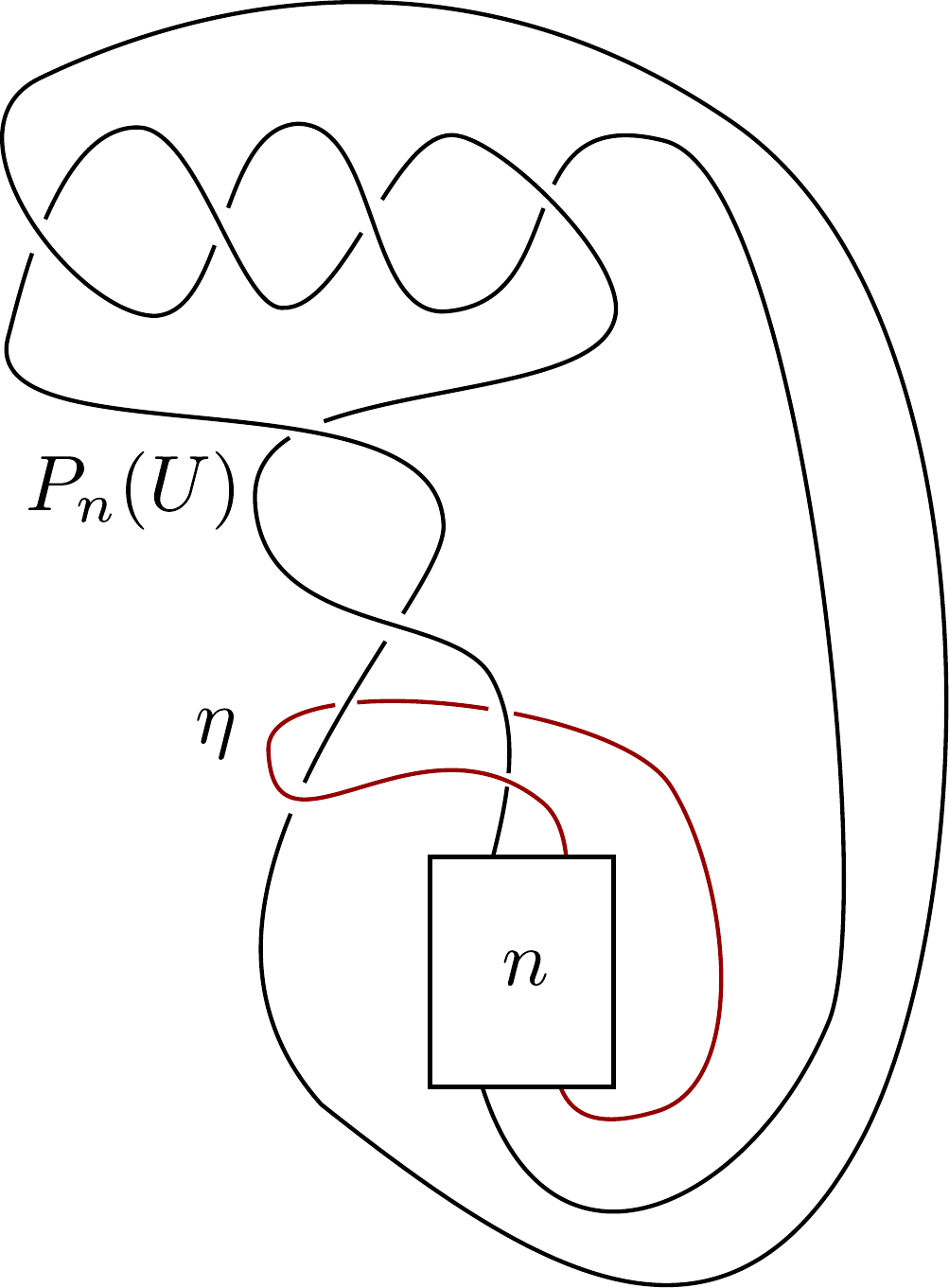}
  \caption{A winding number $n$ pattern $P_n$ in the solid torus $S^3 - \nu(\eta)$.}
  \label{fig:pn}
\end{figure}

\begin{prop}\label{prop: exampleH1}
For any $p$ dividing $n$,  $H_1(\Sigma_p(P_n(U)))$ is a cyclic $\Z[\Z_p]$-module of size $(2^{p}-1)^2$ which is generated by the lifts of $\eta$ to $\Sigma_p(P_n(U))$. 
\end{prop}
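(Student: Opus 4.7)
The plan is to reduce the proposition to a computation with the Alexander module of $P_n(U)=6_1$ and a verification that $\eta$ represents a generator of it. Using a genus-one Seifert matrix $V$ for $6_1$, the Alexander matrix $tV-V^T$ contains a unit entry coming from the $\pm 1$ linking of the Seifert bands, and a short row/column reduction yields that $H_1(X_\infty(6_1);\Z)$ is isomorphic as a $\Lambda:=\Z[t^{\pm 1}]$-module to $\Lambda/\Delta(t)$ with $\Delta(t)=(2t-1)(t-2)$. Applying the identity $\prod_{j=0}^{p-1}(x-\omega_p^j)=x^p-1$ at $x=1/2$ and $x=2$ to this factored polynomial gives $\prod_{j=1}^{p-1}\Delta(\omega_p^j)=(2^p-1)^2$, and hence $|H_1(\Sigma_p(6_1))|=(2^p-1)^2$ via the classical formula $|H_1(\Sigma_p(K))|=\prod_{j=1}^{p-1}|\Delta(\omega_p^j)|$.

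Next, I would invoke the standard identification $H_1(\Sigma_p(K);\Z)\cong H_1(X_\infty(K);\Z)/(t^p-1)H_1(X_\infty(K);\Z)$ from the Wang sequence (after killing the lifted meridian) to conclude that $H_1(\Sigma_p(6_1))$ is a cyclic $\Z[\Z_p]=\Lambda/(t^p-1)$-module. Since $p\mid n=\lk(\eta,P_n(U))$, the curve $\eta$ admits $p$ disjoint closed lifts $\tilde\eta_1,\dots,\tilde\eta_p$ to $\Sigma_p(P_n(U))$, permuted transitively by the $\Z_p$ deck group, so the $\Z$-submodule they generate is the $\Z[\Z_p]$-cyclic submodule $\Z[\Z_p]\cdot[\tilde\eta_1]$. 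The proposition thus reduces to showing that $[\tilde\eta_1]$ generates $\Lambda/(\Delta(t),t^p-1)$, or equivalently that $[\eta]$ generates the cyclic Alexander module $\Lambda/\Delta(t)$.

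To verify the generation claim I would read the diagram in Figure~\ref{fig:pn} directly, expressing $[\eta]$ in the cyclic presentation $\Lambda/\Delta(t)$ using a standard Seifert-surface computation. Since $\Delta(t)=(2t-1)(t-2)$ factors into two linear polynomials that are coprime over $\Q$ (their resultant is $3$), $[\eta]$ generates $\Lambda/\Delta(t)$ if and only if neither $(2t-1)[\eta]$ nor $(t-2)[\eta]$ vanishes in $\Lambda/\Delta(t)$, a finite check determined by the position of $\eta$ in Figure~\ref{fig:pn}. I expect this final step to be the main obstacle: while the other pieces --- the Alexander-matrix reduction, the Wang-sequence identification, and the order count --- are routine, the generation claim depends on a careful reading of the specific diagram, and the author has presumably chosen $P_n$ precisely so that this check succeeds.
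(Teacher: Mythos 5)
Your outline gets two of the three assertions by a legitimately different (and cleaner) route than the paper: since the Alexander module of $6_1$ is cyclic, $H_1(\Sigma_p(P_n(U)))\cong \Lambda/(\Delta(t),t^p-1)$ is automatically a cyclic $\Z[\Z_p]$-module, and the order $(2^p-1)^2$ follows from $\prod_{j=1}^{p-1}|\Delta_{6_1}(\omega_p^j)|$, whereas the paper computes the order as the determinant of an explicit $p\times p$ linking matrix via a recursion and generating functions. But there is a genuine gap at the heart of the proposition, namely the claim that the lifts of $\eta$ generate. Your step ``or equivalently that $[\eta]$ generates the cyclic Alexander module $\Lambda/\Delta(t)$'' is ill-posed: here $\lk(\eta,P_n(U))=n\neq 0$, so $\eta$ has no closed lift to the infinite cyclic cover $X_\infty$ and no class in $H_1(X_\infty)$ at all; its closed lifts exist only in the covers with $p\mid n$, and their classes in $H_1(\Sigma_p(P_n(U)))$ cannot be read off from any ``Alexander-module class of $\eta$.'' (The picture you invoke --- a winding number $0$ curve generating the Alexander module, then modified to winding number $n$ --- is exactly the paper's motivating heuristic, stated just before the proposition, but it is not a proof for the modified curve.) Beyond this, you explicitly defer the diagram-dependent verification, which is precisely the content the paper supplies: it lifts a surgery description of $P_n(U)$ to an equivariant surgery diagram for $\Sigma_p(P_n(U))$, blows down the lift of the $-1/k$-framed curve, and observes that each lift $\eta_i$ is homologous to the meridian of the lifted surgery curve $\alpha_i$, these meridians generating $H_1$ and being cyclically permuted by the deck group. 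Your argument, as written, establishes cyclicity and the order but does not establish generation by the lifts of $\eta$, which is the part actually needed for Theorem~\ref{thm:topobstruction}.

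A secondary issue: even within the framework you set up, the proposed test ``$[\eta]$ generates $\Lambda/\Delta(t)$ iff neither $(2t-1)[\eta]$ nor $(t-2)[\eta]$ vanishes'' is not a valid generation criterion. For example, $3$ times a generator satisfies both nonvanishing conditions yet does not generate, since $\Lambda/(\Delta(t),3)\neq 0$ (indeed the resultant of the two factors is $3$, which is not a unit). The generation check must be carried out honestly, either in the finite module $\Lambda/(\Delta(t),t^p-1)\cong H_1(\Sigma_p(P_n(U)))$ or, as the paper does, directly from a presentation of the branched cover in which the classes of the lifts $\eta_i$ are visible.
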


\begin{proof}
Let $p$ divide $n$. 
Figure~\ref{fig:surgerypn} gives a surgery description of $P_n(U)$ on the left which is simplified in the center. 
For $p$ dividing $n$, we obtain a surgery diagram for $\Sigma_p(P_n(U))$ with $p+1$ surgery curves, as illustrated for $p=3$ in Figure~\ref{fig:surgerypn}. 
\begin{figure}[h!]  \includegraphics[align=c,height=4cm]{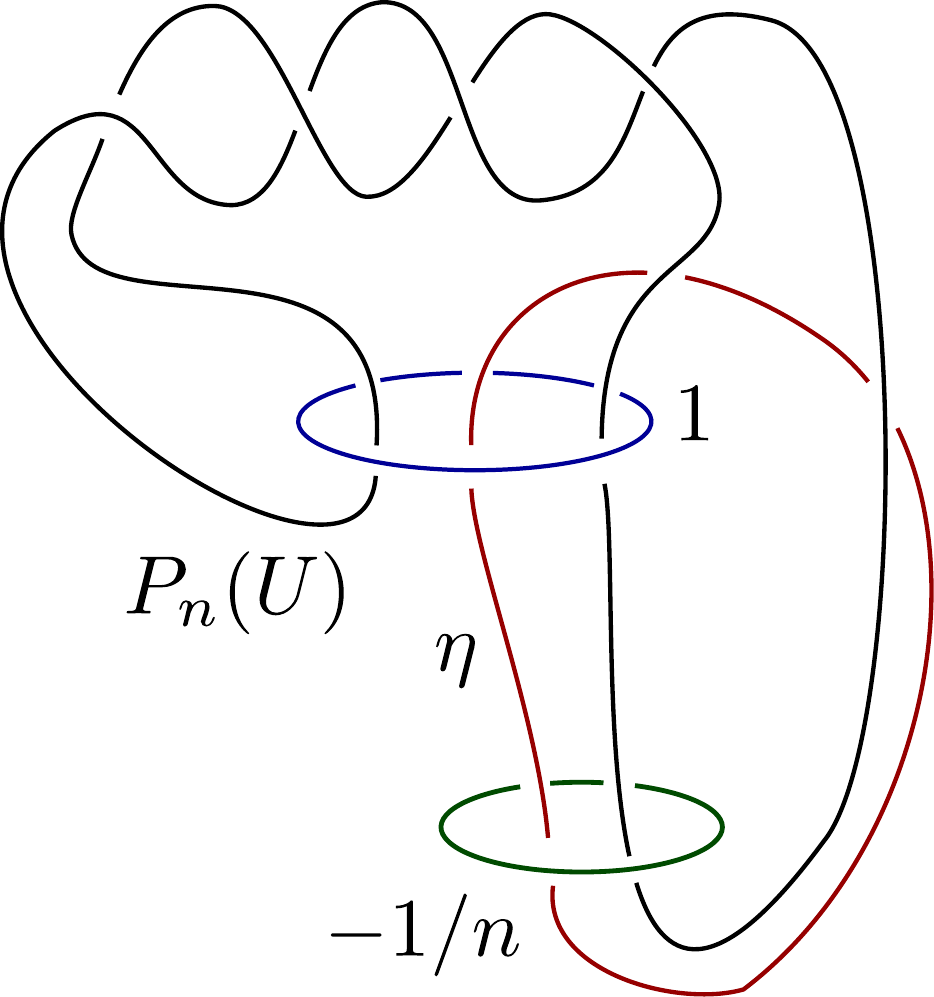}
    \hspace*{.25in}
  \includegraphics[align=c,height=4cm]{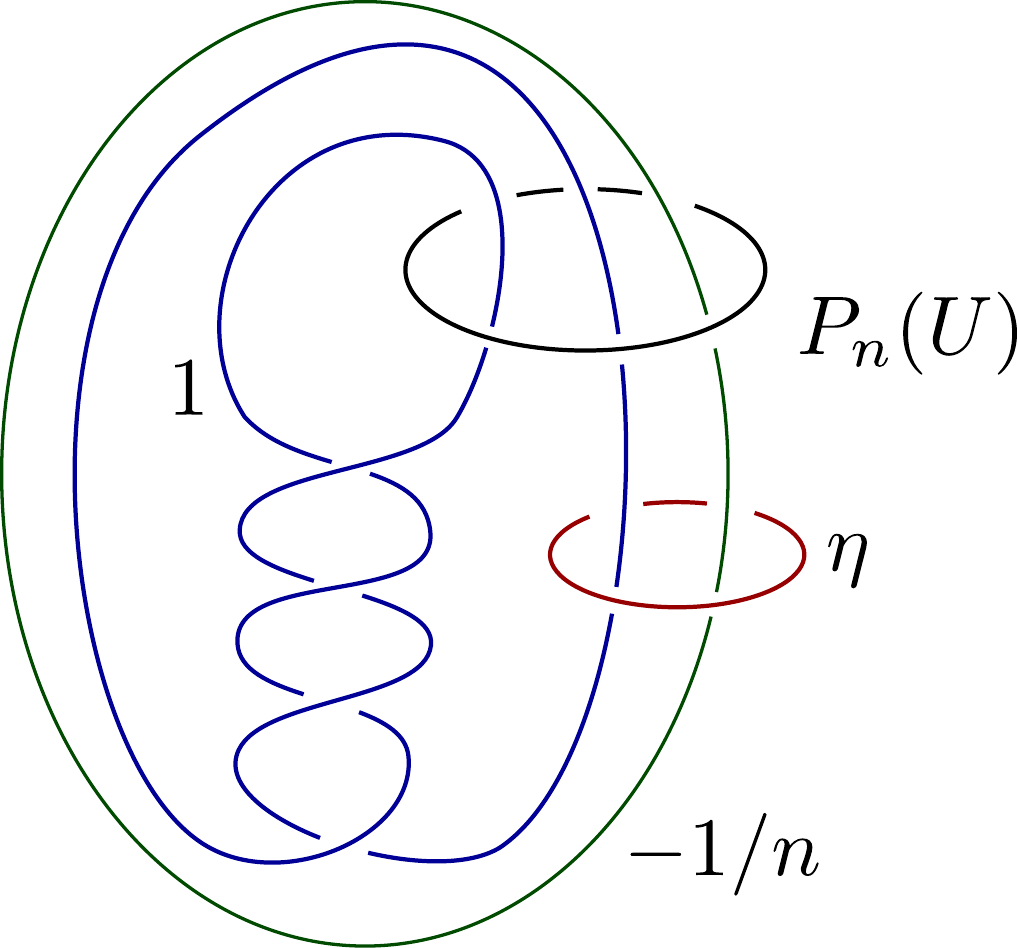}
      \hspace*{.25in}
    \includegraphics[align=c,height=5cm]{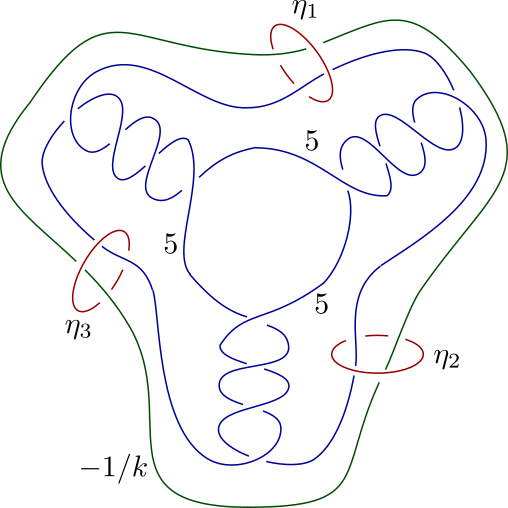}
\caption{A surgery description for $P_n(U)$ (left) is isotoped to an alternate description (center) which lifts to give a surgery diagram for $\Sigma_p(P_n(U))$ for $p$ dividing $n$ (right, depicted for $p=3$ and $n=3k$).}
\label{fig:surgerypn}
\end{figure}

Since the blue $\alpha$ curve has writhe $-4$ and framing $+1$ in the center diagram  and its $p$ lifts  $\alpha_1, \dots, \alpha_p$  will have writhe $0$,  these lifts must be $+5$ framed. Also for $i \neq j$ we have
\[\lk(\alpha_i, \alpha_j)= \left\{ 
\begin{array}{cl}
-2 & \text{ if }  p>2 \text{ and }  j \equiv i  \pm 1 \mod p \\
0 & \text{ if }p>2 \text{ and }   j \not \equiv i  \pm 1 \mod p\\ 
-4 & \text{ if } p=2
\end{array}
\right.
\]
Note that the lifts $\eta_1, \dots, \eta_p$ of the red $\eta$ curve satisfy $\lk(\alpha_i, \eta_j)= \delta_{i,j}$. 
Finally,  the single lift  $\beta$ of the green curve has framing $-1/k$, where $k=n/p$. Note that $\beta$ does not link any of the $\alpha_i$ curves, so we can blow down $\beta$ to obtain a new surgery description without changing the framing of the $\alpha_i$ curves or the linking of the $\alpha_i$ curves with $\eta_j$ curves (though while complicating the diagram significantly!)
After this blow-down, we therefore see that $H_1(\Sigma_p(P_n(U)))$ is generated as a group by the meridians of $\alpha_1, \dots, \alpha_p$, which are cyclically permuted by the covering transformation induced action. So $H_1(\Sigma_p(P_n(U)))$ is a cyclic $\Z[\Z_p]$-module. Also, observe that   each $\eta_i$ is homologous to the meridian of $\alpha_i$ and hence is a $\Z[\Z_p]$-generator for $H_1(\Sigma_p(P_n(U)))$. 

It only remains to show that $|H_1(\Sigma_p(P_n(U)))|=(2^{p}-1)^2$. 
Observe that the group $H_1(\Sigma_p(P_n(U)))$ is presented by the linking matrix of the surgery description. 
For $p=2$, this is $\left[\begin{array}{cc} 5 & -4\\ -4&5 \end{array} \right]$, which is of order $9=(2^2-1)^2$. 
For $p>2$, this is a  $p \times p$ matrix $A(p)$ with 
\[A(p)_{i,j}= \left\{
 \begin{array}{cl}
5 & \text{ if } i=j \\
-2 & \text{ if } i \equiv  j\pm1 \mod p\\
0 & \text{ else}
\end{array}
\right.
.\]
We can check by hand that $\det(A(3))=49= (2^3-1)^2$,  $\det(A(4))= 225= (2^4-1)^2$, and $\det(A(5))=961= (2^5-1)^2$. 
For $p \geq 4$  we define $B(p)$  to be the $p \times p$ matrix with 
\[B(p)_{i,j}= \left\{
 \begin{array}{cl}
5 & \text{ if } i=j \\
-2 & \text{ if } i =  j\pm1 \\
0 & \text{ else}
\end{array}
\right.
.\]
For $p \geq 6$, it then follows from two cofactor expansions that 
\begin{align}\label{eqn:detap}
\det(A(p))= 5 b_{p-1} -8 b_{p-2}-2^{p+1}, \text{ where } b_p:=\det(B(p)),
\end{align}

Observe that $b_4=341$, $b_5=1365$, and that for $p \geq 6$  we can perform two cofactor expansions to show that $b_p= 5b_{p-1}-4b_{p-2}$. Some work with generating functions then shows that for $p \geq 4$ we have $b_p= \frac{1}{3}(2^{2p+2}-1)$. Substituting this expression into Equation~\ref{eqn:detap} gives that
\begin{align*}
\det(A(p))= \frac{5}{3}(2^{2p}-1) -\frac{8}{3}(2^{2p-2}-1)-2^{p+1}= (2^p-1)^2.\quad \quad \quad\qedhere
\end{align*} 
\end{proof}

\begin{cor}\label{cor:pnnothom}
The map induced  by $P_n$ on $\mathcal{C}$ is not a homomorphism for $|n| \neq 1$. 
\end{cor}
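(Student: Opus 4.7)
The plan is to derive Corollary~\ref{cor:pnnothom} as an almost immediate consequence of Theorem~\ref{thm:topobstruction} and Proposition~\ref{prop: exampleH1}, with one small step to pass from the topological to the smooth category.

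First I would reduce to the case $n \geq 2$. By the reversal convention laid out in the Conventions section, $P_n$ induces a homomorphism on $\mathcal{C}$ if and only if $P_n^{\text{rev}}$ does, so there is no loss of generality in taking $n$ positive. Together with the hypothesis $|n| \neq 1$, this lets us assume $n \geq 2$.

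Next I would check that the hypotheses of Theorem~\ref{thm:topobstruction} are satisfied by $P_n$. The knot $P_n(U) = 6_1$ is slice, so $P_n$ is a slice pattern. Because $n \geq 2$, we may choose a prime $p$ dividing $n$. Proposition~\ref{prop: exampleH1} tells us that $H_1(\Sigma_p(P_n(U)))$ is a group of order $(2^p-1)^2 \geq 9$, in particular nontrivial, and is generated by the lifts of $\eta$. These are precisely the two conditions required to apply Theorem~\ref{thm:topobstruction}, which therefore guarantees that $P_n$ does not induce a homomorphism on the topological concordance group $\mathcal{C}_t$.

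Finally, I would upgrade this to the smooth statement by observing that the natural map $\mathcal{C}_s \to \mathcal{C}_t$ is a surjective group homomorphism: any satellite map that is a homomorphism on $\mathcal{C}_s$ would descend to a homomorphism on $\mathcal{C}_t$, so the topological obstruction rules out the smooth case as well. No serious obstacle arises in the argument; Proposition~\ref{prop: exampleH1} was set up exactly so that the hypotheses of Theorem~\ref{thm:topobstruction} would be automatic, and the corollary is essentially a bookkeeping step assembling the two results.
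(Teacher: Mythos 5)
Your proposal is correct and takes essentially the same route as the paper, whose entire proof is the observation that Proposition~\ref{prop: exampleH1} supplies, for any prime $p$ dividing $n$ (available since $|n|\neq 1$), exactly the hypotheses of Theorem~\ref{thm:topobstruction}, so $P_n$ is not a homomorphism on $\mathcal{C}_t$ and hence a fortiori not on $\mathcal{C}_s$. Your preliminary reduction to $n\geq 2$ via reversal is harmless but not needed, since Theorem~\ref{thm:topobstruction} and Proposition~\ref{prop: exampleH1} apply directly to negative $n$ as well.
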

\begin{proof}
This follows immediately from Theorem~\ref{thm:topobstruction} in light of Proposition~\ref{prop: exampleH1}. 
\end{proof}

Note that $P_1$ is  geometric winding number 1, and hence acts by connected sum with $P_1(U)= 6_1 \sim U$. However, $P_{-1}$ is not geometric winding number $\pm 1$. While $P_{-1}$  is not topologically concordant to a core of $S^1 \times D^2$ in $(S^1 \times D^2) \times I$ and hence does not obviously act trivially,  to date there  no known ways to show that a slice pattern of winding number $\pm1$ pattern  does not induce a standard map (i.e. identity or reversal) on $\C_t$. 

\begin{prob}\label{prob:pminus1}
Determine whether $P_{-1}$ acts by the identity on $\C_t$ and, if not, determine whether it acts by a homomorphism. 
\end{prob}

\begin{exl}\label{exl:composite}
Let $P$ be the winding number 2 pattern given in Figure~\ref{fig:genpropexample}, and note that $P(U)= 3_1 \#-3_1$.
\begin{figure}[h!]  \includegraphics[align=c,height=3cm]{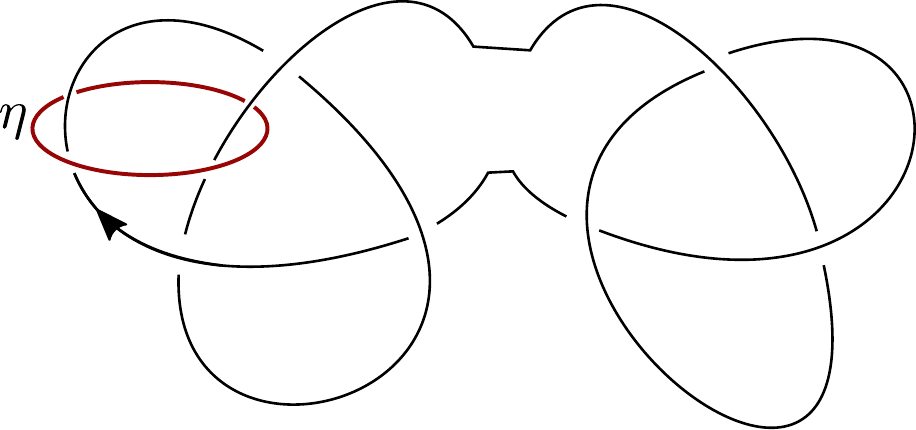}
\caption{A winding number 2 pattern $P$ with $P(U)= 3_1 \# -3_1$.}\label{fig:genpropexample}
\end{figure}
It is straightforward to verify, for example by building a surgery diagram for $\Sigma_2(P(U))$ as in Proposition~\ref{prop: exampleH1}, that $H=H_1(\Sig_2(P(U))) \cong \Z_3  \oplus \Z_3$, with generators $x$ and $y$ such that the linking form $\lambda$ is given by the matrix $\frac{1}{3}\left[ \begin{array}{cc} 1 & 0 \\ 0 & -1 \end{array} \right]$. Moreover, the curve $\eta$ lifts to $\eta_1$ and $\eta_2$ in $\Sig_2(P(U))$, where $[\eta_1]=x$ and $[ \eta_2]=-x$ in $H$. In particular, the lifts of $\eta$ to $\Sig_2(P(U))$ certainly do not generate $H$, and so we cannot apply Theorem~\ref{thm:topobstruction}. Let $x_1, y_1, x_2, y_2, x_3,$ and $y_3$ be the natural generators for $\mathcal{H}=H \oplus H \oplus -H$, where  $\Lambda= \lambda \oplus \lambda \oplus -\lambda$, the linking form on $\mathcal{H}$, is given with respect to our basis by  
\[
\frac{1}{3}\left(\left[ \begin{array}{cc} 1 & 0 \\ 0 & -1 \end{array} \right]
\oplus
\left[ \begin{array}{cc} 1 & 0 \\ 0 & -1 \end{array} \right]
\oplus
\left[ \begin{array}{cc} -1 & 0 \\ 0 & 1 \end{array} \right]
\right).
\]
A straightforward if tedious analysis of the order $27$ subgroups of $\mathcal{H}$ gives us the following list of 48 metabolizers for $\Lambda$, where $\epsilon=(\epsilon_1, \epsilon_2, \epsilon_3) \in (\pm1)^3$.
\begin{align*}
M_1^{\epsilon}&=
 \langle x_1 + \epsilon_1 y_1, x_2 + \epsilon_2 y_2, x_3 + \epsilon_3 y_3 \rangle, \,
 M_2^{\epsilon}=
 \langle x_1 + \epsilon_1 y_1, x_2 + \epsilon_2 x_3, y_2 + \epsilon_3 y_3 \rangle, \\
 M_3^{\epsilon}&=
 \langle x_1 + \epsilon_1 y_2, x_2 + \epsilon_2 y_1, x_3 + \epsilon_3 y_3 \rangle, \,
 M_4^{\epsilon}=
 \langle x_1 + \epsilon_1 y_2, x_2 + \epsilon_2 x_3, y_1 + \epsilon_3 y_3 \rangle, \\
 M_5^{\epsilon}&=
 \langle x_1 + \epsilon_1 x_3, x_2 + \epsilon_2 y_2, y_1+ \epsilon_3 y_3 \rangle, \,
 M_6^{\epsilon}=
 \langle x_1 + \epsilon_1 x_3, x_2 + \epsilon_2 y_1, y_2 + \epsilon_3 y_3 \rangle. 
\end{align*}
We now construct characters to $\Z_3$ vanishing on each $M_j^{\epsilon}$ satisfying the conditions of Proposition~\ref{prop:strongobstruction}. 
\begin{enumerate}
\item If $M= M_1^{\epsilon}$ or $M= M_2^{\epsilon}$, let $\chi=(\chi_1, 0, 0)$, where $\chi_1(x_1)= - \epsilon_1$ and $\chi_1(y_1)=1$. 
Then our collections $\{\chi_j(x_j), \chi_j(-x_j)\}$ for $j=1, 2, 3$  are 
$\{- \epsilon_1, \epsilon_1\}$, $\{0,0\}$ and $\{0,0\}$.
\item If $M=M_3^{\epsilon}$ or $M= M_4^{\epsilon}$, let $\chi= (\chi_1, \chi_2, 0)$ where $\chi_1$ sends $x_1$ to $-\epsilon_1$ and $y_1$ to 0 and $\chi_2$ sends  $x_2$ to 0 and $y_2$ to 1. 
Then our collections $\{\chi_j(x_j), \chi_j(-x_j)\}$ for $j=1, 2, 3$ are  
$\{- \epsilon_1, \epsilon_1\}$, $\{0,0\}$ and $\{0,0\}$.
\item If $M=M_5^\epsilon$ or $M=M_6^{\epsilon}$, let $\chi= (\chi_1, \chi_2, 0)$, where $\chi_1$ sends $x_1$ to 0 and $y_1$ to 1 and $\chi_2$ sends $x_2$ to $-\epsilon_2$ and $y_2$ to 0. 
Then our collections $\{\chi_j(x_j), \chi_j(-x_j)\}$ for $j=1, 2, 3$ are  
$\{0,0\}$, $\{- \epsilon_2, \epsilon_2\}$,  and $\{0,0\}$.
\end{enumerate}

Note that in this example it is important that we only need to consider metabolizers rather than arbitrary subgroups of the appropriate order, since certainly any  character $\chi$ that vanishes on the order 27 subgroup  $\langle x_1, x_2, x_3 \rangle$ will have $\{\chi_i(x_j), \chi_i(-x_j)\}=\{0,0\}$ for all $j=1,2,3$. 
\end{exl}

\begin{rem}
Proposition~\ref{prop:strongobstruction} can never be applied to a pattern $P$ when any of the following conditions hold:
\begin{enumerate}
\item The curve $\eta$ is in the second derived subgroup of $S^3 \smallsetminus P(U)$, and hence lifts to a null-homologous curve in every cyclic branched cover of $P(U)$.
\item The knot $P(U)$ has $\Delta_{P(U)}(t)=1$. 
\item The winding number of $P$ is $\pm 1$.
\end{enumerate}
As discussed before Problem~\ref{prob:pminus1}, the goal of obstructing a pattern from being a homomorphism in case (3) is quite ambitious. However, in the next section we give examples of patterns for each $n \in \N$ which are described by curves lying in the $n$th derived subgroup of $S^3 \smallsetminus P(U)$ and yet which do not induce homomorphisms on $\C_t$. 
\end{rem}

\section{Other topological obstructions}\label{ss:cot}

We consider patterns $R_J$, which are described in Figure~\ref{fig:pattern946}. 
\begin{figure}[h!]
  \includegraphics[height=4cm]{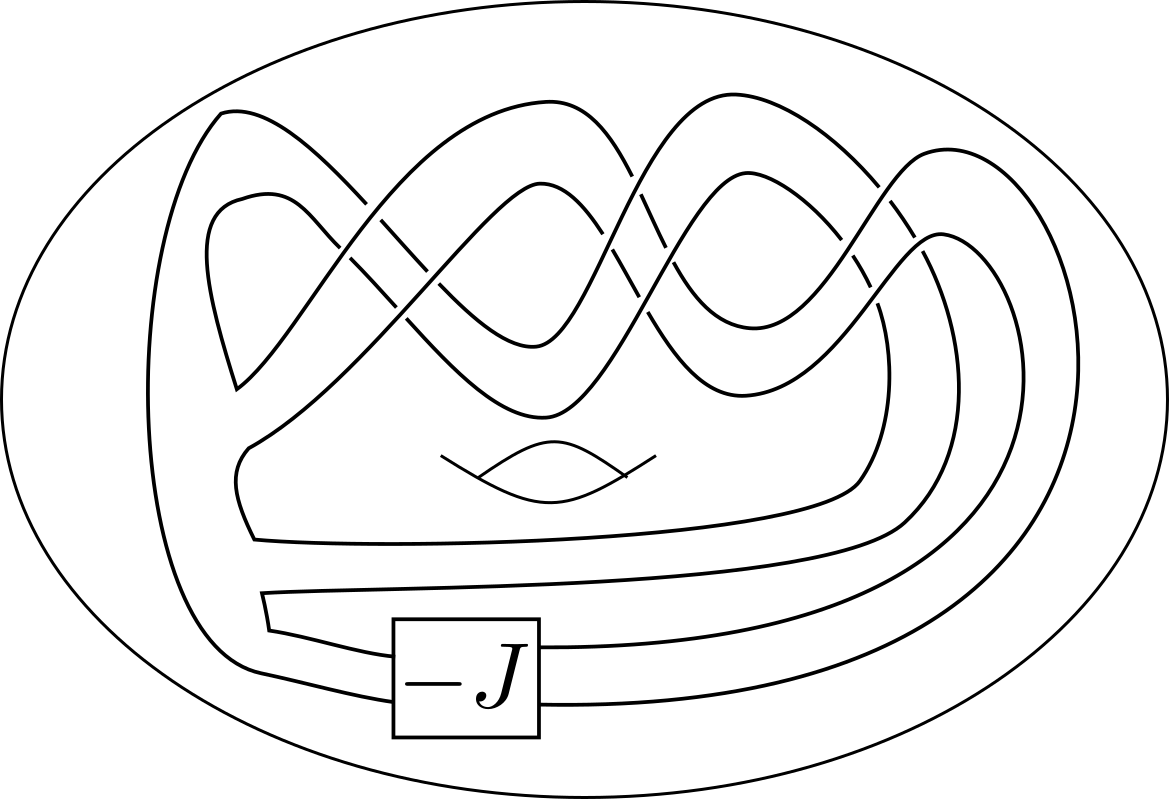} 
  \quad \quad
  \includegraphics[height=4cm]{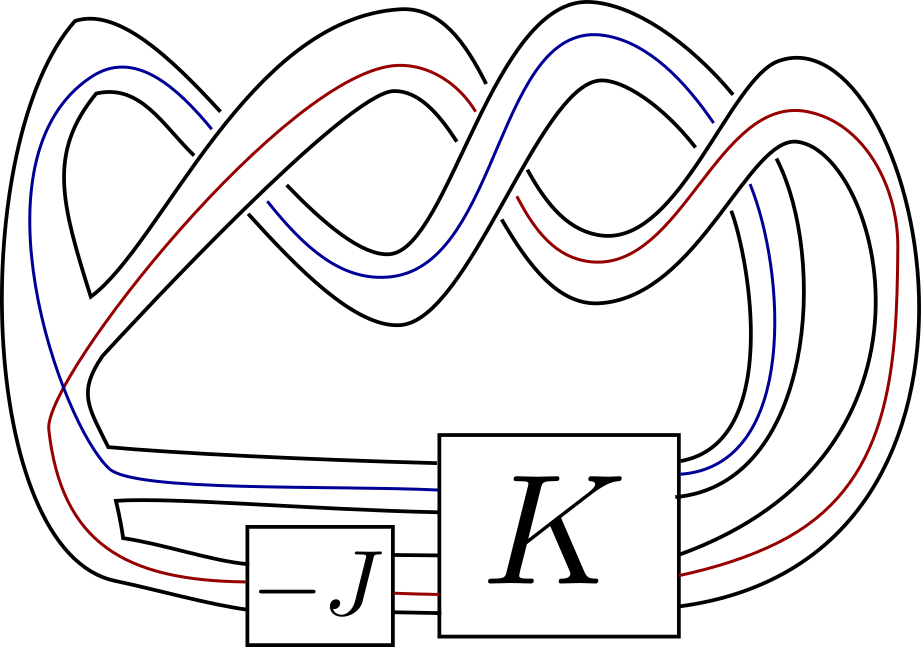}
  \label{fig:pattern946}
  \caption{The pattern $R_J$ (left) and the knot $R_J(K)$ (right), with two curves drawn on its genus 1 Seifert surface.}
\end{figure}
Observe that $R_J(U)$ and $R_J(J)$ can both be seen to be smoothly slice as follows. The knot $R_J(K)$ always has a genus 1 Seifert surface with two 0-framed curves,  shown in blue and red on the right of Figure~\ref{fig:pattern946}. When $K=U$, the blue curve is an unknot and in particular is smoothly slice, so surgery of the pushed-in Seifert surface gives a smooth slice disc in $B^4$ for $R_J(U)$. Similarly, when $K=J$, the red curve has knot type $-J \# J$, which again is smoothly slice and so which can be surgered along to give a slice disc for $R_J(J)$. We now prove that there are many choices of knots $\{J_i\}$ such that arbitrarily many compositions of $R_{J_i}$ maps still do not induce homomorphisms, and as a result give examples of non-homomorphism patterns which map deep into the $n$-solvable filtration of \cite{Cochran-Orr-Teichner:1999-1}.

\begin{rem}
In fact, the pattern $R_J$ never induces a homomorphism. 
We leave the details of this argument to the interested reader, noting that the classes  of $H_1(\Sigma_2(R_J(U)))$ represented by the lifts of $\eta$  are independent of $J$. It is not hard to verify as in Proposition~\ref{prop: exampleH1}  that $H_1(\Sigma_2(R_J(U)))$  is isomorphic to $H:=\Z_3\langle x \rangle \oplus \Z_3 \langle y \rangle$, and has linking form $\lambda$  given with respect to these generators by the matrix $\frac{1}{3} \left[\begin{array}{cc} 0 & -1 \\ -1 & 0 \end{array}\right]$. Moreover, the infection curve $\eta$ lifts to curves representing $x+y$ and $-x-y$ in $H_1(\Sigma_2(R_J(U)))$. An analysis of the metabolizers of $(H, \lambda) \oplus (H, \lambda) \oplus (H,-\lambda)$ as in Example~\ref{exl:composite} now shows that the conditions of Proposition~\ref{prop:strongobstruction} hold. 

For appropriate choices of $\{J_i\}_{i \in \Z}$ (e.g. with increasingly large values of $\sigma_{J_i}(e^{2 \pi i/3})$), it is not hard to prove that we get infinitely many winding number 0 patterns, distinct in their action on $\C_t$, which are obstructed from acting as homomorphisms by Proposition~\ref{prop:strongobstruction}. 
\end{rem}

Theorem~\ref{thm:nsolvhom} follows immediately from the following proposition. 

\begin{prop}\label{prop:nsolvablenonhom}
For any $n \in \N$ and any choices of knots $K_1, \dots, K_n$ there exists a winding number 0 pattern $P$ such that 
\begin{enumerate}
\item $P(K)$ is smoothly slice for each $K=U, K_1, \dots, K_n$. 
\item The image of $P$ is contained in $\mathcal{F}_{n+1}$, the $(n+1)$th level of the solvable filtration. 
\item There exists $C>0$ such that if 
$|\rho_0(K)| = \left| \int_{S^1} \sigma_\omega(K) d \omega \right|>C$
then $P(K) \notin \mathcal{F}_{n+2}$, and in particular is not topologically slice.
\item $P$ does not induce a homomorphism $\C_t \to \C_t$ (or even a homomorphism $\C_t \to C_t/ \mathcal{F}_{n+2}$.)
\end{enumerate}
\end{prop}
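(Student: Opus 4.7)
The plan is a higher-derived analog of the Casson-Gordon argument for $R_J$ sketched in the Remark above. I would build $P$ by iterated infection from an $R_J$-type base so that its image lands deep in the solvable filtration, and then replace Casson-Gordon signatures with the $L^2$-$\rho$ invariants of \cite{Cochran-Harvey-Leidy:2011-02}, using a higher-derived analog of Litherland's formula (Proposition~\ref{prop:Litherlandspecialcase}) to run the final obstruction.

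For the construction, define $P$ inductively: let $P^{(0)} = R_{J_0}$ for an auxiliary knot $J_0$ to be chosen later, and for $1 \leq i \leq n$ obtain $P^{(i)}$ by infecting $P^{(i-1)}$ along a curve $\eta_i$ lying in the $i$-th term of the derived series of $\pi_1\bigl(S^3 \smallsetminus P^{(i-1)}(U)\bigr)$, using $K_i$ as the infecting knot; arrange $\eta_i$ to sit on a fresh genus one Seifert surface whose two derivatives follow the $R_{K_i}$-template (one unknot, plus one curve that becomes concordant to $-K_i \# K_i$ when the outer companion equals $K_i$). Set $P = P^{(n)}$. Claim (1) then follows inductively: surgery on the unknotted derivative at each stage turns a slice disk for $P^{(i-1)}(U)$ into a slice disk for $P^{(i)}(U)$, while the $K_i$-adapted derivative at the $i$-th stage provides the slice disk for $P(K_i)$. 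Claim (2) is the standard fact that infection along a curve in the $i$-th derived subgroup raises the solvable level by one, so iterating $n+1$ times lands in $\mathcal{F}_{n+1}$.

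For (3), I would invoke the higher-order signature theorem of \cite{Cochran-Harvey-Leidy:2011-02} to produce a representation $\phi$ of $\pi_1(M_{P(K)})$ into a PTFA group of derived length $n+2$, constructed from the iterated infection data, satisfying an identity of the form
\[
\rho(P(K), \phi) \;=\; \rho(P(U), \phi) \;+\; c\,\rho_0(K)
\]
for some nonzero rational $c$ depending on the $\eta$-lifts in the final cover. The Cheeger-Gromov universal bound on $|\rho(N, \phi)|$ for $N \in \mathcal{F}_{n+2}$ then yields the desired constant $C$.

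Obstructing the homomorphism property (4) is the main difficulty. A naive attempt setting $K_1 = K$, $K_2 = -K$ with $|\rho_0(K)|$ large fails, since $\rho_0(-K) = -\rho_0(K)$ and $\rho(P(U), \phi) = 0$, so the candidate $\rho$-sum collapses exactly to zero. Instead I would run the higher-derived analog of the Remark's $R_J$ argument: pick $J, K$ with large and independent $|\rho_0(J)|$, $|\rho_0(K)|$ and form $L := P(J) \# P(K) \# -P(J \# K)$; if $P$ were a homomorphism modulo $\mathcal{F}_{n+2}$, then $L \in \mathcal{F}_{n+2}$, so by the CHL amenable signature theorem every invariant metabolizer of the higher-order linking form on the relevant cover of $L$ would admit a compatible $\phi$ with $|\rho(L,\phi)|$ uniformly bounded. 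Splitting $\rho(L, \phi)$ across the three connected summands via the higher-derived Litherland formula and running a metabolizer enumeration analogous to Example~\ref{exl:composite} should produce, for each invariant metabolizer, a $\phi$ whose three restrictions give non-cancelling contributions linear in $\rho_0(J)$ and $\rho_0(K)$; making these large enough violates the bound. The bulk of the work lies in this enumeration, since at derived level $n+2$ the relevant $\Z[\Gamma]$-module is far more intricate than the $(\Z_3 \oplus \Z_3)^3$ handled in Example~\ref{exl:composite}.
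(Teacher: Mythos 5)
Your overall architecture (iterated $R_J$-type construction plus the Cochran--Harvey--Leidy machinery) is the right family of ideas, but as written the proposal has two genuine gaps. First, the construction: by following the ``$R_{K_i}$-template'' at stage $i$ you tie $K_i$ itself into the new piece, but in an iterated construction the companion that the $i$-th stage actually sees when the global input is $K_i$ is not $K_i$ --- it is the image of $K_i$ under the earlier stages. So the distinguished derivative at stage $i$ does not become $-K_i \# K_i$; it becomes a curve whose knot type mixes $-K_i$ with the already-satellited knot, and sliceness of $P(K_i)$ does not follow. The paper's construction avoids this precisely by defining the $i$-th pattern as $R_{Q_{i-1}(K_i)}$, where $Q_{i-1}$ is the composition of the previous stages, so that the derivative becomes $-Q_{i-1}(K_i)\# Q_{i-1}(K_i)$ and the outer stages carry $U$ to $U$. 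Relatedly, your justification of (2) is incomplete: the ``standard fact'' does not start the induction, since an arbitrary companion $K$ need not even be $0$-solvable, and placing $\eta_i$ in the $i$-th derived subgroup is not the hypothesis of Theorem~\ref{thm:chl} (which concerns compositions of patterns whose defining curves merely have linking number zero). One needs the input from \cite{DMOP16} that the genus-one, algebraically slice knot produced by the innermost stage is $1$-solvable for every $K$; Theorem~\ref{thm:chl} applied to the remaining $n$ stages then gives $\mathcal{F}_{n+1}$.

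The more serious gap is (4). Your plan rests on a ``higher-derived Litherland formula,'' an amenable signature theorem, and a metabolizer enumeration for a higher-order linking form at derived level $n+2$, none of which is established here; you yourself note that the bulk of the work remains, so as written this is a program rather than a proof, and it is far from clear the enumeration could be carried out in this generality. It is also unnecessary: once (1) and (3) are in place, (4) follows from additivity of $\rho_0$ under connected sum of companions. Arrange the innermost stage so that $P(T_{2,3})$ is slice (as in (1)) and note $\rho_0(T_{2,3})\neq 0$; then $\rho_0(\#^m T_{2,3})=m\,\rho_0(T_{2,3})$ exceeds the constant $C$ of (3) for $m$ large, so $P(\#^m T_{2,3})\notin \mathcal{F}_{n+2}$, whereas a homomorphism $\C_t \to \C_t/\mathcal{F}_{n+2}$ would force $P(\#^m T_{2,3})$ to agree with $\#^m P(T_{2,3})$, which is trivial, modulo $\mathcal{F}_{n+2}$. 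This is exactly Corollary~\ref{cor:nonhomviapnk}. Your ``naive attempt'' discussion only rules out cancelling $K$ against $-K$ and misses this one-knot argument, which requires no new signature identity and no metabolizer analysis at all.
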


We will construct the patterns of Proposition~\ref{prop:nsolvablenonhom} by using compositition of patterns.

\begin{defn}
Given patterns $P \colon S^1 \to S^1 \times D^2$ and $Q: S^1\colon S^1 \times D^2$, we define the \emph{composite pattern} $P \circ Q$ as follows. 
Let $i_Q\colon S^1 \times D^2 \to S^1 \times D^2$ be an embedding of a standard tubular neighborhood of $Q(S^1)$. 
Then $P \circ Q$ is the  pattern
\[P \circ Q \colon S^1 \xrightarrow{P} S^1 \times D^2 \xrightarrow{i_Q} S^1 \times D^2.
\]
We remark that $(P\circ Q)(K)$ is always isotopic to $P(Q(K))$.
\end{defn}

The following special case of a result of Cochran-Harvey-Leidy~\cite{Cochran-Harvey-Leidy:2011-02}  implies that any composition of $R_J$ maps is highly nontrivial on concordance, even modulo terms of the $n$-solvable filtration of \cite{Cochran-Orr-Teichner:1999-1}. It uses the Blanchfield pairing $\Bl$ on the Alexander module of a knot, which takes values in $\Q(t)/ \Z[t^{\pm1}]$. We will not work with the Blanchfield pairing in detail, and therefore omit its definition: it suffices for our purposes to know that if the class of $\eta$ generates the Alexander module then $\Bl(\eta, \eta)$ must be non-zero.

\begin{thm}[\cite{Cochran-Harvey-Leidy:2011-02}]\label{thm:chl}
For each $i=1,...n$, let $R_i$ be a slice knot and $\eta_i$ be an unknotted curve in the complement of $R_i$ such that $\lk(R_i, \eta_i)=0$ and $\Bl(\eta_i, \eta_i) \neq 0$. Let $P_i$ be the pattern obtained by considering $R_i$ in the solid torus $S^3 \smallsetminus \nu(\eta_i)$, and let $P=P_n \circ \dots \circ P_1$.  

For any $k \in \mathbb{N}$, if $K \in \mathcal{F}_k$ then $P(K) \in \mathcal{F}_{n+k}$. 
Also, there exists $C>0$ such that if $K$ is a knot with $|\rho_0(K)|>C$ then $P(K) \notin \mathcal{F}_{n+1}$, and hence is not slice. 
\end{thm}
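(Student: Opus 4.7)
The strategy is to establish the two assertions separately: the inclusion $P(K) \in \mathcal{F}_{n+k}$ by induction on the number of compositions, and the non-triviality statement via higher-order $L^2$-$\rho$-invariants.

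For the first claim I would induct on $n$. The base case $n=1$ amounts to showing that if $K \in \mathcal{F}_k$ then $P_1(K)$, which is the infection of the slice knot $R_1$ by $K$ along the null-homologous curve $\eta_1$, lies in $\mathcal{F}_{k+1}$. This is a now-standard satellite construction for the solvable filtration: one assembles a $4$-manifold by gluing a slice-disc exterior for $R_1$ to a $(k+1)$-solution for $K$ along a tubular neighborhood of $\eta_1$, and verifies the required vanishing of intersection forms on the $(k+1)$-st derived cover. The essential hypothesis is that $\eta_1 \in \pi_1(S^3 \smallsetminus R_1)^{(1)}$, which follows from $\lk(R_1,\eta_1)=0$. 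The inductive step is immediate from the definition $P_i(K) = R_{i,\eta_i}(K)$: since $P_{n-1}\circ\dots\circ P_1(K) \in \mathcal{F}_{n-1+k}$, one more infection along $\eta_n$ into the slice knot $R_n$ pushes us into $\mathcal{F}_{n+k}$.

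For the second claim I would invoke the higher-order $\rho$-invariant machinery of Cochran--Orr--Teichner and its refinement by Cochran--Harvey--Leidy. Suppose for contradiction that $P(K) \in \mathcal{F}_{n+1}$, witnessed by an $(n+1)$-solution $W$. By the Cochran--Orr--Teichner vanishing theorem, for any PTFA representation $\phi \colon \pi_1(W) \to \Gamma$ extending a character of $H_1(P(K))$ that vanishes on a suitable chain of metabolizers, the $L^2$-signature defect $\rho(P(K),\phi)$ must vanish. On the other hand, the infection formula expresses
\[
\rho(P(K), \phi) \;=\; \sum_{i=1}^n \rho(R_i, \phi_i) \;+\; \sum_{j} \rho_0(K),
\]
where $\phi_i$ is the induced representation on $\pi_1(S^3\smallsetminus R_i)$ and the second sum collects one copy of $\rho_0(K) = \int_{S^1}\sigma_\omega(K)\,d\omega$ for each $\eta_i$ along which the relevant character restricts nontrivially. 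Setting $C := \sum_i |\rho(R_i, \phi_i)|_{\max}$, where the maximum is taken over the finitely many PTFA characters of bounded order produced by the CHL construction, one concludes $|\rho_0(K)| \le C$, contradicting the hypothesis once this bound is exceeded.

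The main obstacle is the metabolizer-chasing argument: one must show that for \emph{every} choice of metabolizer at each stage of the tower of higher-order Alexander modules built from $W$, there survives a representation in the tower on which $\eta_i$ acts nontrivially and thus forces a genuine $\rho_0(K)$ contribution into the formula. This is precisely where the hypothesis $\Bl(\eta_i,\eta_i)\neq 0$ is essential: it guarantees that $\eta_i$ is not annihilated by the classical Blanchfield form, and the Cochran--Harvey--Leidy technology leverages this nonvanishing inductively up the derived series to produce characters surviving any metabolizer. Once such a surviving character tower exists at every stage, the desired contradiction (and hence the constant $C$) follows from the finite list of representations that can occur.
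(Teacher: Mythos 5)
First, note that the paper does not prove this statement at all: Theorem~\ref{thm:chl} is quoted as a special case of a theorem of Cochran--Harvey--Leidy, so there is no internal argument to compare against; what you have written is an attempted reconstruction of the CHL proof. Your overall architecture is the right one (part (1) by assembling solutions along the infection tori, part (2) by von Neumann $\rho$-invariants plus a metabolizer-avoidance argument driven by $\Bl(\eta_i,\eta_i)\neq 0$), but as a proof it has genuine gaps. The most serious is your definition of the constant $C$ as a maximum of $|\rho(R_i,\phi_i)|$ over ``the finitely many PTFA characters of bounded order produced by the CHL construction.'' No such finiteness is available: the representations that arise depend on the unknown $(n+1)$-solution for $P(K)$ and on choices of self-annihilating submodules of higher-order Alexander modules, and they range over an infinite family. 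The uniform constant in CHL comes instead from the Cheeger--Gromov universal bound: for each fixed closed $3$-manifold $M_{R_i}$ there is a constant bounding $|\rho(M_{R_i},\phi)|$ over \emph{all} representations $\phi$, and $C$ is (essentially) the sum of these bounds. Without that input your $C$ is not well defined and the final inequality $|\rho_0(K)|\le C$ does not follow.

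Second, the step you label ``the main obstacle''--- that for \emph{every} $(n{+}1)$-solution and every induced chain of metabolizers there survives a coefficient system that is nontrivial on the innermost infection curve, because the classical nonvanishing $\Bl(\eta_i,\eta_i)\neq 0$ propagates to nonvanishing of the higher-order Blanchfield forms at each stage --- is precisely the technical heart of the Cochran--Harvey--Leidy theorem (their higher-order Blanchfield duality), and in your write-up it is only asserted, not argued; so the sketch does not constitute a proof of part (2). Two smaller corrections: in part (1) one glues a $k$-solution for $K$ (a $(k{+}1)$-solution is not available by hypothesis) to the slice-disc exterior of $R_1$, and the point is that the resulting $4$-manifold is a $(k{+}1)$-solution for the satellite because $\eta_1\in\pi_1(S^3\smallsetminus R_1)^{(1)}$ pushes the relevant surfaces one step deeper in the derived series; and in your signature formula the companion contributes a single term $\rho(M_K,\phi|)$, which equals $\rho_0(K)$ when $\phi$ is nontrivial on the meridian of $K$ (PTFA groups being torsion-free) and $0$ otherwise --- it is not a sum of copies of $\rho_0(K)$ indexed by the $\eta_i$.
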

It is well-known that  the pattern given by $(R, \eta)$ in Figure~\ref{fig:pattern946} satisfies the Blanchfield pairing condition as well as evidently having $\lk(R,\eta)=0$~\cite{Cochran-Harvey-Leidy:2011-02}.

\begin{cor}\label{cor:nonhomviapnk}
Let $P$ be a composition of $n$ patterns $P_i$ described by unknotted curves $\eta_i$ in the complement of $P_i(U)$ such that  $[\eta_i] \in \pi_1(S^3 \smallsetminus \nu(P_i(U)))^{(1)}$ and $\Bl(\eta_i, \eta_i) \neq 0$ for all $i=1, \dots n$.
Suppose that there is some knot $K$ with $\rho_0(K) \neq 0$ such that $P(K)$ is slice. 
Then $P$ does not induce a homomorphism on $\mathcal{C}$, even modulo $\mathcal{F}_{n+1}$. 
\end{cor}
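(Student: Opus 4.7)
The plan is to argue by contradiction. I will suppose that $P$ induces a homomorphism $\C_t \to \C_t/\mathcal{F}_{n+1}$ and derive a contradiction by combining Theorem~\ref{thm:chl} with the additivity of $\rho_0$ under connected sum.

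First I would confirm that each $P_i(U)$, and hence inductively $P(U) = P_n(P_{n-1}(\cdots P_1(U)\cdots))$, is slice; this is automatic from the paper's standing convention that all patterns are slice. Combined with the hypothesis that $P(K)$ is slice for our chosen $K$ with $\rho_0(K) \neq 0$, this gives $[P(K)] = 0$ in $\C_t/\mathcal{F}_{n+1}$. The assumed homomorphism property then yields
\[
[P(\#^m K)] = m\,[P(K)] = 0 \in \C_t/\mathcal{F}_{n+1}
\]
for every $m \in \N$, i.e.\ $P(\#^m K) \in \mathcal{F}_{n+1}$ for every $m$.

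Next I would apply Theorem~\ref{thm:chl} to the composition $P = P_n \circ \cdots \circ P_1$. The hypothesis $[\eta_i] \in \pi_1(S^3 \smallsetminus \nu(P_i(U)))^{(1)}$ is equivalent to $\lk(\eta_i, P_i(U)) = 0$, and together with $\Bl(\eta_i, \eta_i) \neq 0$ this supplies exactly the hypotheses of that theorem. It then produces a constant $C > 0$ such that any knot $L$ with $|\rho_0(L)| > C$ satisfies $P(L) \notin \mathcal{F}_{n+1}$. Since $\rho_0$ is additive under connected sum, $\rho_0(\#^m K) = m\,\rho_0(K)$, and choosing $m$ large enough that $m|\rho_0(K)| > C$ produces $P(\#^m K) \notin \mathcal{F}_{n+1}$, contradicting the conclusion of the previous paragraph.

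There is no real obstacle here: the argument is essentially a bookkeeping exercise marrying the obstruction of Theorem~\ref{thm:chl} to the group-theoretic structure forced by a homomorphism. The only point meriting a sentence of verification is that the derived-subgroup condition on $\eta_i$ matches the linking-number hypothesis of Theorem~\ref{thm:chl}, which is a standard consequence of Alexander duality in the knot complement.
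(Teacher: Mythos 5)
Your argument is correct and is essentially the paper's own proof: both invoke Theorem~\ref{thm:chl} (after noting that the derived-subgroup condition on $\eta_i$ gives $\lk(\eta_i,P_i(U))=0$), use additivity of $\rho_0$ to make $|\rho_0(\#^m K)|$ exceed the constant $C$, and contradict the vanishing of $[P(\#^m K)]$ in $\C_t/\mathcal{F}_{n+1}$ forced by the assumed homomorphism and the sliceness of $P(K)$. No gaps; the extra verification you flag is exactly the routine point the paper leaves implicit.
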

\begin{proof}
Let $C$ be as in the statement of Theorem~\ref{thm:chl}. 
Since $\rho_0(nK)=n\rho_0(K)$ and $|\rho_0(K)|>0$, by taking $n$ sufficiently large we have that $|\rho_0(nK)|>C$ and hence, by Theorem~\ref{thm:chl}, that $P(nK) \neq 0 \in \mathcal{C}/\mathcal{F}_{n+1}$ and in particular is not topologically slice. Therefore $P$ is not a homomorphism, since $P(K)$ is smoothly slice. 
\end{proof}
This corollary implies that any composition of $R_{J_i}$ patterns is not a homomorphism, as long as the `innermost pattern' is based on a knot $J_1$ with $\rho_0(J_1) \neq 0$. Proposition~\ref{prop:nsolvablenonhom} now follows quickly. 
\begin{proof}[Proof of Proposition~\ref{prop:nsolvablenonhom}]
Let $P_0= R_{T_{2,3}}$ and $Q_0= P_0$. 
Inductively for $i=1, \dots, n$, let $P_i= R_{Q_{i-1}(K_i)}$ and let $Q_i=P_i \circ Q_{i-1}$. Let 
$P= Q_n= P_n \circ \dots \circ P_1 \circ P_0.$
 Observe that since $P_i(U) \sim U$ for all $i=0, \dots, n$, we have that $P(U) \sim U$. Also, for each $K_i$ we have that 
 \begin{align*}
 P(K_i)&= (P_n \circ \dots \circ P_{i+1})(P_i(Q_{i-1}(K_i))) \\
& = (P_n \circ \dots \circ P_{i+1})(R_{Q_{i-1}(K_i)}(Q_{i-1}(K_i))) \sim (P_n \circ \dots \circ P_{i+1})(U) \sim U. 
 \end{align*}
 For any knot $K$, the knot $P_0(K)$ is genus 1 and algebraically slice, hence by \cite{DMOP16} is 1-solvable. Therefore, Theorem~\ref{thm:chl} implies that
$P(K)= (P_n \circ \dots \circ P_1)(P_0(K))$ is $(n+1)$-solvable, and we have established condition (2). 
Theorem~\ref{thm:chl} also implies (3) , and since $P(T_{2,3}) \sim U$ and $\rho_0(T_{2,3}) \neq 0$,  Corollary~\ref{cor:nonhomviapnk} implies that (4) holds as well. 
\end{proof}

We can also use  iterated satellite constructions to give the first examples of non-standard patterns which behave like homomorphisms on arbitrary finite sets of knots.
\begin{proof}[Proof of Theorem~\ref{thm:homlike}]
Let  $J_1, \dots, J_n$ be any finite list of knots. Apply Proposition~\ref{prop:nsolvablenonhom} to the collection $\{T_{2,3}\} \cup \{J_i\}_{i=1}^{n} \cup \{J_i \# J_j\}_{i,j=1}^n$ to obtain a pattern $P$ such that for any $i$ and $j$
\begin{align}\label{eqn:homlike}
P(J_i \# J_j) \sim U \sim U \# U \sim P(J_i) \# P(J_j).\end{align}
Corollary~\ref{cor:nonhomviapnk} implies that $P$ does not induce a homomorphism of $\C_t$.
\end{proof}
 We remark that the map induced by $P$ is provably not even a homomorphism on the subgroup generated by the $\{J_i\}_{i=1}^m$, so long as one of the $J_i$ has $\rho_0(J_i) \neq 0$.  Forcing homomorphism-like behavior by sending many knots to the trivial class is not particularly exciting, but the fact that these examples are the first of their type should indicate the wide-open nature of Conjecture~\ref{conj:nohom} and prompt the following question. 

\begin{quest}\label{quest:subhom}
Does there exist a non-standard pattern $P$ which acts by a homomorphism when restricted to some infinite subgroup of $\C$?
\end{quest}
We remark that in Example~\ref{exl:L6a4} we exhibit a non-standard pattern which preserves amphichirality, thereby preserving all known examples of 2-torsion in $\mathcal{C}$. However, it does not seem likely that such a pattern $P$ will have $P(K_1 \# K_2) \sim P(K_1) \# P(K_2)$ for $K_1$ and $K_2$ non-concordant amphichiral knots, and so the above question remains open. 

We close by observing that all the obstructions discussed so far rely only on the homotopy class of $\eta$ in the complement of $P(U)$. It is an interesting open question whether the map $P \colon \C_t \to \C_t$ is determined by this homotopy class (see Problem 3.5 of \cite{AIM19}). If this were so, it would imply that our failure to give  examples of patterns with $P(U)$ unknotted which do not induce homomorphisms on $\C_t$ is unsurprising, at least when $|w(P)| \leq 1$: when $P(U)=U$ the linking number of $P$ with $\eta$ determines the homotopy class.  

On the other hand, we find it surprising that one cannot obstruct the $(p,1)$ cable maps from inducing homomorphisms of $\C_t$, and state that as a worthwhile problem.
\begin{prob}
Determine whether the cable $C_{p,1}$ induces a homomorphism of $\C_t$ for $p>1$. 
\end{prob}

\section{Small patterns acting on $\C_{s}$}\label{section:small}

We conclude by considering patterns of small crossing number. Since the first nontrivial slice knot has 6 crossings (either the Stevedore's knot $6_1$ or the square knot $T_{2,3} \# - T_{2,3}$), it is perhaps unsurprising that all of the patterns we consider have $P(U)=U$. As a consequence, none of our topological obstructions apply and so in this section we work only in the smooth category, in particular letting $\sim$ denote the equivalence relation of smooth concordance.

We rely heavily on the $\tau$-invariant of Heegaard-Floer homology \cite{Ozsvath-Szabo:2003-1}, which vanishes on smoothly slice knots. We begin by reviewing the formula for the $\tau$-invariant of $P(K)$ for certain prototypical patterns $P$ of each winding number and arbitrary knots $K$. 

\begin{thm}[\cite{Hom2014-Tau-of-Cables}]
Let $p>1$. Then 
\begin{align*}
\tau(C_{p,1}(K))&=
\left\{
\begin{array}{cc}
p \tau(K), & \epsilon(K)\in\{0,1\}\\
p \tau(K) + (p-1), & \epsilon(K)=-1\\
\end{array}
\right. 
\end{align*}
\end{thm}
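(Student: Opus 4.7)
The plan is to compute $\tau(C_{p,1}(K))$ via the knot Floer complex $\mathit{CFK}^\infty(K)$ together with a cabling formula for the Floer complex of a $(p,1)$-cable. First I would reduce $\mathit{CFK}^\infty(K)$ to a horizontally and vertically simplified basis, in which there is a distinguished generator $x_0$ realizing $\tau(K)$. The three values of $\epsilon(K)$ record the local structure of the complex at $x_0$: whether $x_0$ is the foot of a vertical arrow ($\epsilon(K)=1$), the head of a horizontal arrow ($\epsilon(K)=-1$), or an isolated summand isomorphic to $\mathit{CFK}^\infty(U)$ ($\epsilon(K)=0$).

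Next I would apply the bordered Heegaard Floer cabling formula to express $\widehat{\mathit{CFK}}(C_{p,1}(K))$ as a pairing between an invariant of the exterior of $K$ (determined by the reduced $\mathit{CFK}^\infty(K)$) and an invariant of the $(p,1)$-cable pattern in the solid torus. Because the cabling slope is $1$, the pattern side is particularly tractable. The resulting filtered complex then lets one read off $\tau(C_{p,1}(K))$ as the minimal Alexander grading of a generator whose class survives to the generator of $\widehat{HF}(S^3) \cong \mathbb{F}$.

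The case analysis then proceeds as follows. When $\epsilon(K)\in\{0,1\}$, the canonical $p$-fold lift of $x_0$ is a cycle that survives in homology, with Alexander grading exactly $p\tau(K)$; in the $\epsilon(K)=0$ case both this quantity and $p\tau(K)$ equal zero, since $\epsilon(K)=0$ forces $\tau(K)=0$, and the $\mathit{CFK}^\infty(U)$-summand of $K$ contributes a $\mathit{CFK}^\infty(T_{p,1})$-summand after cabling. When $\epsilon(K)=-1$, the natural candidate lift of $x_0$ is killed by a differential originating in the horizontal arrow at $x_0$; tracing the cancellations through the pattern complex, the surviving representative sits exactly $(p-1)$ Alexander levels higher, which accounts for the $(p-1)$ correction.

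The main obstacle is verifying that the identified generator genuinely survives to a nontrivial homology class, i.e. ruling out unexpected cancellations coming from the portion of $\mathit{CFK}^\infty(K)$ away from $x_0$. This is precisely what $\epsilon(K)$ was designed to detect: it encapsulates exactly the local data about $x_0$ that is relevant to the behavior of $\tau$ under satelliting, and bounds the extent to which far-away parts of the complex can interfere. Establishing the invariance rigorously requires a careful bookkeeping argument within the bordered Floer framework, together with Hom's stabilization results showing that adding acyclic summands to $\mathit{CFK}^\infty(K)$ does not change the output; this is the technical heart of the argument.
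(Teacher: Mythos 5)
This statement is quoted in the paper as Hom's theorem and is not proved there, so the only meaningful comparison is with Hom's published argument in \cite{Hom2014-Tau-of-Cables}. Your outline does follow the architecture of that proof: pass to a horizontally/vertically simplified basis for $\mathit{CFK}^\infty(K)$, use the Lipshitz--Ozsv\'ath--Thurston algorithm to obtain the type-$D$ module of the framed knot complement, pair it with the type-$A$ module of the $(p,1)$-pattern in the solid torus, and read off $\tau$ of the cable, with $\epsilon(K)$ governing the case split. So the machinery and the overall strategy are the right ones.

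Two concrete problems remain, though. First, your dictionary for $\epsilon$ is wrong as stated. If $x_0$ is the distinguished generator realizing $\tau(K)$ (i.e.\ generating the vertical homology), then in a vertically simplified basis $x_0$ is not the endpoint of any vertical arrow, so ``$x_0$ is the foot of a vertical arrow'' cannot characterize $\epsilon(K)=1$; what $\epsilon$ records is the interaction of $x_0$ with the \emph{horizontal} differential: $\epsilon=1$ when $x_0$ is the target of a horizontal arrow, $\epsilon=-1$ when it is the source, and $\epsilon=0$ when it is neither. (Sanity check: for $T_{2,3}$ one has $\partial b = c + U a$, the vertical homology is generated by $a$, which is the target of the horizontal arrow, and $\epsilon(T_{2,3})=+1$.) Taken literally, your convention would exchange the two cases of the formula, attaching the $(p-1)$ correction to the wrong value of $\epsilon$. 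Second, the substance of the theorem is exactly what you defer to ``careful bookkeeping'': computing $\mathit{CFA}$ of the cable pattern, writing down $\mathit{CFD}$ of the complement including the unstable chain (whose shape depends on the framing relative to $2\tau(K)$), taking homology of the box tensor product, and verifying that the summands of $\mathit{CFK}^\infty(K)$ away from $x_0$ contribute only acyclic pieces so that the identified class genuinely survives. In particular, the $(p-1)$ shift in the $\epsilon(K)=-1$ case emerges from this pairing computation---from the unstable chain and the coefficient maps abutting $x_0$---rather than from a single local cancellation at the horizontal arrow. As it stands, the proposal correctly identifies the framework and the answer but does not yet constitute a proof; alternatively, a shorter modern derivation could be given via the immersed-curve reformulation of bordered Floer homology, but that too requires carrying out the corresponding computation rather than citing it.
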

Note that this gives an easy proof that $C_{p,1}$ does not induce a homomorphism on $\C_s$ as follows:
Let $K$ be any knot with $\epsilon(K)=+1$ (e.g. $K=T_{2,3}$). Then $\epsilon(-K)=-1$ and $C_{p,1}(K) \# C_{p,1}(-K)$ is not slice, since
\[ \tau(C_{p,1}(K) \# C_{p,1}(-K)) = \tau(C_{p,1}(K))+ \tau(C_{p,1}(-K))= 
p\tau(K) + p \tau(-K) + p-1=p-1 \neq 0.
\]

We also have the following similar results, which give analogous simple proofs that the Mazur and Whitehead patterns do not induce homomorphisms on $\C_s$. 

\begin{thm}[\cite{Lev16}]
Let $\M$ denote the positive Mazur pattern. Then 
\begin{align*}
\tau(\M(K))=
\left\{
\begin{array}{cc}
\tau(K), &  \tau(K) \leq 0 \text{ and } \epsilon(K) \in \{0,1\}\\
\tau(K) + 1, & \text{ else}
\end{array}
\right. .
\end{align*}
\end{thm}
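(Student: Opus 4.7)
My plan is to compute $\tau(\M(K))$ by bordered Heegaard Floer homology, in the spirit of Lipshitz--Ozsv\'ath--Thurston. Cut the exterior of $\M(K)$ along the pattern torus into two bordered pieces: the complement $X_{\M} = (S^1 \times D^2) \smallsetminus \nu(\M)$ of the Mazur pattern in a solid torus, and the usual knot exterior $X_K = S^3 \smallsetminus \nu(K)$. By the bordered pairing theorem, the bifiltered complex $CFK^{-}(\M(K))$ is filtered quasi-isomorphic to a box tensor product of a type $DA$ bimodule recording $X_{\M}$ with the type $D$ structure associated to $CFK^{-}(K)$.

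First I would extract the pattern's bordered bimodule from an explicit genus-one Heegaard diagram for $\M$ in $S^1 \times D^2$; this is a one-time finite computation producing a small algebraic object whose generators and structure maps can be written down completely. Next I would reduce $CFK^{-}(K)$ to the horizontally and vertically simplified form used by Hom, in which the $\tau$-level generator $x_0$ sits at Alexander filtration $\tau(K)$ and its local structure is completely captured by $\epsilon(K)$: either $x_0$ is an isolated summand (when $\epsilon(K) = 0$, which forces $\tau(K) = 0$), lies at the tail of a length-one horizontal arrow (when $\epsilon(K) = +1$), or lies at the head of such an arrow (when $\epsilon(K) = -1$). Third, I would perform the box tensor product case by case and identify, in each paired complex, the generator of maximal Alexander filtration that survives to $\widehat{HF}(S^3) \cong \mathbb{F}$; that filtration level is $\tau(\M(K))$.

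The arithmetic of the cases is then governed entirely by how the pattern bimodule interacts with the local picture near $x_0$. In the subcases $(\tau(K),\epsilon(K)) = (0,0)$ and $(0,+1)$ one expects a ``$\tau$-preserving'' sector of the pattern bimodule to apply, and the surviving generator sits at filtration $\tau(K)$. In each remaining subcase a length-one arrow (of Alexander weight one) in the pattern bimodule is activated by the local configuration at $x_0$, and the surviving generator is pushed up by exactly one Alexander level, yielding $\tau(\M(K)) = \tau(K) + 1$. The cases $\tau(K) > 0$, $\tau(K) < 0$, and $(\tau(K),\epsilon(K)) = (0,-1)$ each produce the shifted answer but via three somewhat distinct local configurations.

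The hard part is the bordered computation of the bimodule for $\M$ itself: choosing a workable Heegaard diagram, matching generators to idempotents, and verifying the type $DA$ structure relations before one can even run the pairing. A lighter-weight alternative would be to work in the Hanselman--Rasmussen--Watson immersed-multicurve model, viewing $CFK^{-}(K)$ as an immersed curve on the marked torus and describing $\M$ as a geometric operation on such curves; this makes the $\epsilon$-trichotomy visually transparent but still rests on a single explicit computation of the curve-theoretic action of the Mazur pattern.
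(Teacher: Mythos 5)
This theorem is quoted by the paper from \cite{Lev16} rather than proved in it, and your overall strategy---pair a bordered bimodule for the Mazur complement with the type $D$ structure coming from Hom's reduced model of $CFK^\infty(K)$, then read off $\tau$ from the box tensor product---is exactly the route Levine takes, so the plan is aimed in the right direction. However, as written your case analysis would prove a \emph{different} formula than the one stated. You claim the unshifted answer $\tau(\M(K))=\tau(K)$ only in the subcases $(\tau(K),\epsilon(K))=(0,0)$ and $(0,+1)$, and assert that ``$\tau(K)<0$'' always produces the shifted answer $\tau(K)+1$. The theorem says the unshifted answer occurs whenever $\tau(K)\le 0$ and $\epsilon(K)\in\{0,1\}$; in particular a knot with $\tau(K)<0$ and $\epsilon(K)=+1$ must give $\tau(\M(K))=\tau(K)$, contradicting your grouping. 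The shift is governed by the condition ``$\tau(K)>0$ or $\epsilon(K)=-1$,'' not by the sign of $\tau$ alone, and nothing in your outline justifies discarding the $\tau<0$, $\epsilon=+1$ configuration (it is not rendered vacuous by any standard property of $\epsilon$; only $\epsilon=0\Rightarrow\tau=0$ is available). This indicates the proposal has not actually identified which local configurations of the unstable/stable chains in $\widehat{CFD}(S^3\smallsetminus\nu(K))$ activate the $+1$ shift, which is the entire content of the computation.

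Two further points. First, your description of Hom's simplified basis is off: $\epsilon(K)=\pm1$ records whether the vertically distinguished generator is the head or tail of \emph{some} horizontal arrow, whose length is arbitrary (and $\epsilon=0$ does not mean the generator is an isolated direct summand of the complex); since the lengths of the adjacent stable chains enter the paired complex, you cannot assume a length-one arrow. Second, everything substantive---the computation of the $DA$ bimodule (or, in the immersed-curve picture, of the curve-level action) of the Mazur pattern, and the verification of which generator survives in each of the resulting tensor products---is deferred, so even after correcting the case division this remains an outline of Levine's argument rather than a proof.
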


\begin{thm}[\cite{Hedden:2007-1}]
Let $\Wh$ denote the positive Whitehead double pattern. Then 
\begin{align*}
\tau(\Wh(K))=
\left\{
\begin{array}{cc}
0, &  \tau(K) \leq 0 \\
 1, & \tau(K)>0
 \end{array}
\right. .
\end{align*}
\end{thm}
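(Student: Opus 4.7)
The plan is to combine the Ozsv\'ath--Szab\'o genus bound for $\tau$ with a direct Heegaard Floer computation that tracks the fate of a distinguished generator in the knot Floer complex of $\Wh(K)$.

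First, I would observe that $\Wh(K)$ admits an obvious genus-one Seifert surface (obtained by plumbing two bands through the clasp), so the inequality $|\tau| \leq g$ immediately forces $\tau(\Wh(K)) \leq 1$. The entire content of the theorem is to pin down whether the value is $0$ or $1$ in terms of the sign of $\tau(K)$. To do this, I would follow \cite{Hedden:2007-1} and build an explicit doubly-pointed Heegaard diagram for $\Wh(K)$ out of a diagram for $K$, by inserting a local piece near the clasp. In such a diagram the intersection points split into two classes: a distinguished ``clasp'' generator $x_0$ sitting in Alexander grading $1$ and Maslov grading $0$, together with intersection points that are in bijection with the generators of $\widehat{CFK}(K)$ (with a uniform grading shift).

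The heart of the proof is to decide whether $x_0$ represents a nontrivial class in $\widehat{HFK}(\Wh(K), 1)$. A local analysis of the holomorphic disks in the diagram shows that the differentials into and out of $x_0$ are controlled by the $U$-equivariant structure of $CFK^\infty(K)$, and more precisely by how the filtration on $CFK^\infty(K)$ restricts to Alexander grading $\geq 0$. One then matches the resulting algebraic condition to the definition of $\tau$: $x_0$ survives to $\widehat{HFK}(\Wh(K),1)$ precisely when the ``$\tau$-realizing'' cycle of $\widehat{CFK}(K)$ lies in strictly positive Alexander grading, which is exactly the condition $\tau(K) > 0$. If $x_0$ survives, then $\widehat{HFK}(\Wh(K), 1) \neq 0$ and hence $\tau(\Wh(K)) = 1$; otherwise the top surviving Alexander grading is $0$, giving $\tau(\Wh(K)) = 0$.

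The main obstacle is the Floer-theoretic bookkeeping in the second step: enumerating which holomorphic disks in the diagram for $\Wh(K)$ contribute to the differential on (and into) $x_0$, tracking the Alexander and Maslov grading shifts, and verifying that the resulting combinatorial condition genuinely matches the sign of $\tau(K)$ rather than, say, some finer invariant. Modern treatments streamline this via the bordered satellite formula, in which one need only compute the type-$D$ module of the Whitehead pattern (a small object with two generators in the relevant grading) and read $\tau(\Wh(K))$ off the box tensor product with $CFK^-(K)$; this reorganizes the same Floer-theoretic bookkeeping in a more algebraic form but does not avoid it.
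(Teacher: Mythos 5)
First, note that the paper does not prove this statement at all: it is quoted directly from Hedden's paper on Whitehead doubles and used as a black box, so the only meaningful comparison is with Hedden's original argument, which your sketch is implicitly trying to reproduce.

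There is a genuine gap at the final step of your outline. You argue that the clasp generator $x_0$ survives in homology precisely when $\tau(K)>0$, and then conclude ``if $x_0$ survives, then $\widehat{HFK}(\Wh(K),1)\neq 0$ and hence $\tau(\Wh(K))=1$; otherwise the top surviving Alexander grading is $0$, giving $\tau(\Wh(K))=0$.'' That inference is false: nonvanishing of knot Floer homology in the top Alexander grading detects the Seifert genus, not $\tau$. Since $\Wh(K)$ has genus one for every nontrivial companion $K$, the group $\widehat{HFK}(\Wh(K),1)$ is nonzero for \emph{every} nontrivial $K$ -- in particular for $K$ the figure-eight knot, where $\tau(\Wh(K))=0$. (Hedden in fact computes $\widehat{HFK}(D_+(K,t),1)$ and it is never zero for nontrivial $K$; its rank, and even its Maslov gradings, do not by themselves determine $\tau$.) So the dichotomy you propose cannot distinguish the two cases of the theorem, and the ``matching of the algebraic condition to the definition of $\tau$'' is exactly the content that is missing. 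What is actually needed is information about the filtered chain homotopy type of $\widehat{CF}(S^3)$ induced by $\Wh(K)$, i.e.\ whether the map $H_*(\mathcal{F}(\Wh(K),0))\to \widehat{HF}(S^3)$ is onto; Hedden establishes this by identifying the relevant subquotient complexes of the doubled knot's filtration with $\widehat{HF}$ of large surgeries on $K$ (together with the elementary bound $0\le\tau(\Wh(K))\le 1$ coming from the fact that one positive-to-negative crossing change in the clasp unknots $\Wh(K)$), and then invoking his results on how $\tau(K)$ governs those surgery groups and maps. Your proposal would become correct if the survival criterion for $x_0$ were replaced by this analysis of the inclusion-induced maps on the filtration (equivalently, in the bordered reformulation you mention, one must extract $\tau$ from the full box tensor product, not merely from the rank of the top Alexander grading).
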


Somewhat surprisingly, even though these `prototypical' patterns do not induce homomorphisms, we can still use these formulae for $\tau$ to prove the following.

\begin{prop}\label{prop:tauhom}
Let $P$ be a pattern that induces a homomorphism on $\mathcal{C}_s$.  
Then for any knot $K$, \[\tau(P(K))= |w(P)| \tau(K).\]
\end{prop}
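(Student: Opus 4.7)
The plan is to exploit the fact that $\tau$ is itself a concordance homomorphism $\C_s \to \Z$. Since $P$ is assumed to induce a homomorphism on $\C_s$, the composite $K \mapsto \tau(P(K))$ is then a homomorphism from $\C_s$ to $\Z$. In particular, for any knot $K$ and every $n \in \N$ we have $P(\#^n K) \sim \#^n P(K)$ and hence $\tau(P(\#^n K)) = n \tau(P(K))$; taking $K = U$ recovers $\tau(P(U)) = 0$, as expected.

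Next I would invoke a general satellite bound on $\tau$: for any pattern $P$ there exists a constant $C_P$, depending only on $P$, such that
\[|\tau(P(L)) - w(P) \tau(L)| \leq C_P \qquad \text{for every knot } L.\]
This is a result of Van Cott generalizing Hedden's cable formulas; crucially it requires no homomorphism hypothesis on $P$. Applying it to $L = \#^n K$ and using $\tau(\#^n K) = n \tau(K)$ gives $|n\tau(P(K)) - n\, w(P) \tau(K)| \leq C_P$, so dividing by $n$ and letting $n \to \infty$ forces $\tau(P(K)) = w(P) \tau(K)$. Under the standing convention $w(P) \geq 0$ this coincides with $|w(P)|\tau(K)$.

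The main obstacle is the satellite bound itself: if one prefers not to invoke Van Cott as a black box, it can be derived from a bounded-genus cobordism between $P(K)$ and an appropriate cable of $K$, together with Hom's cable formula and the standard inequality $|\tau(J)| \leq g_4(J)$. The conceptual point is that the homomorphism hypothesis amplifies a merely bounded satellite error $|\tau(P(K)) - w(P)\tau(K)|$ into an exact equality by the usual $n$-fold connect-sum trick, and this amplification is the only genuinely nontrivial step in the argument.
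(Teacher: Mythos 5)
Your proposal is correct and takes essentially the same route as the paper: the paper obtains the bounded error $|\tau(P(K)) - w(P)\tau(K)| \leq c$ by comparing $P$ to the same-winding-number model pattern $C_{p,1}$ (or $\Wh$ when $w(P)=0$) via the genus bound of \cite{CH17} together with the $\tau$-formulas of Hom/Hedden and $|\tau| \leq g_4$, and then amplifies boundedness to equality with exactly your $n$-fold connected-sum argument (split into $\epsilon(K)$ cases only because of the correction term in Hom's formula, which your constant $C_P$ absorbs). The one quibble is attributing the black-box satellite bound to Van Cott, whose estimates concern $C_{p,q}$-cables and are noted in Section~\ref{section:small} to be ill-suited here; your fallback derivation via a bounded-genus comparison with the cable is the right justification and is precisely the paper's proof.
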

\begin{proof}
Let $Q=C_{p,1}$ (if $p:=w(P) \geq 1$) and $\Wh$ (if $p=0$.). We give the argument for $Q=C_{p,1}$, but an exactly analogous one works for $\Wh$.
 
Since $P$ and $Q$ have the same winding number, by \cite{CH17} there exists a constant $c>0$ such that
for all $K$, 
\[|\tau(Q(K) \# -P(K))| \leq g_4(Q(K) \#-P(K)) \leq c.\]
Suppose now that $\epsilon(K)=-1$. It follows from the basic properties of $\epsilon$ that $\epsilon(nK)=-1$ for any $n>0$. 
We therefore have the following, using in the first equality that $P$ is a homomorphism:
\begin{align*}
c &\geq |\tau(Q(nK) \# - P(nK)) | = |p \tau(nK)+(p-1) - \tau(nP(K))| 
=|n[p \tau(K)-  \tau(P(K))] + p-1|.
\end{align*}
Letting $n \to \infty$, we see that we must have $\tau(P(K))= p\tau(K)$. 
The argument for $K$ with $\epsilon(K)=+1$ or $\epsilon(K)=0$ is analogous. 
\end{proof}
We remark that it is perhaps an interesting problem to show the same result for Rasmussen's $s$-invariant, which shares many but not all formal properties with $\tau$. Inspection of the proof shows that it would suffice to show that  for each $p$ there exists a constant $C(p)$ such that $|s(C_{p,1}(K)) - p s(K)| \leq C(p)$ for all knots $K$.  The work of Van Cott~\cite{VanCott10} gives bounds for $s(C_{p,q}(K))$ as $q \to \infty$, which seem ill-suited to the case of interest. Of course, if one believes Conjecture~\ref{conj:nohom}, then the $s$-invariant analogue of Proposition~\ref{prop:tauhom} would be trivially true, independently of the behavior of $s$ under cabling. 
\\

It will be useful for us to have a much weaker notion of preserving group structure. 
\begin{defn}
An \emph{pseudo-homomorphism} of a group $G$ is a map $\phi \colon G \to G$ such that $\phi(e_G)=e_G$ and $\phi(g^{-1})=\phi(g)^{-1}$ for all $g \in G$.
\end{defn}
We can rephrase this in our context in a somewhat surprising way. For any pattern $P$ and knot $K$, we have that $-P(K)$ is isotopic to $(-P)(-K)$. It follows that  $P$ induces a pseudo-homomorphism on $\C$  if and only if $P(U) \sim U$ and
$(-P)(K)= -P(-K) \sim P(K)$ for all $K$. 
 
\begin{cor}\label{cor:pseudohom}
Let $P$ be a winding number $p$ pattern. 
Suppose that 
\begin{enumerate}
\item[($p>1$)]
$P$ can be changed to $C_{p,1}$ with any number of crossing changes  $(+)$ to $(-)$  and strictly fewer than $\frac{p-1}{2}$ crossing changes $(-)$ to $(+)$.
\item[($p=1$)]
$P$ can be changed to $\M$ with any number of crossing changes, all $(+)$ to $(-)$. 
\item[($p=0$)]
$P$ can be changed to $\Wh$ with any number of crossing changes, all $(+)$ to $(-)$.
\end{enumerate}
Then  $P$ does not induce a pseudo-homomorphism on $\C_s$. 
\end{cor}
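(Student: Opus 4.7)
The plan is to assume $P$ does induce a pseudo-homomorphism on $\C_s$ and derive a contradiction using the $\tau$-invariant. By the reformulation just before the corollary, a pseudo-homomorphism satisfies $P(K) \sim -P(-K)$ for every knot $K$, which in particular forces
\[ \tau(P(K)) = -\tau(P(-K)). \]
The main tool will be the standard crossing change inequality for $\tau$: a $(+)\to(-)$ crossing change can decrease $\tau$ by $0$ or $1$, and a $(-)\to(+)$ change can increase it by $0$ or $1$. Since each crossing change performed inside the pattern produces the same crossing change in $P(K)$ for every companion $K$, the hypotheses of the corollary translate into the uniform bounds
\[ \tau(Q(K)) - b \leq \tau(P(K)) \leq \tau(Q(K)) + a \qquad \text{for every knot } K, \]
where $Q$ is the relevant prototype ($C_{p,1}$, $\M$, or $\Wh$), $a$ is the number of $(+)\to(-)$ changes, and $b$ is the number of $(-)\to(+)$ changes used to pass from $P$ to $Q$.

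For $p > 1$ I would test with $K = T_{2,3}$, so that $\tau(K) = 1$, $\epsilon(K) = +1$, $\tau(-K) = -1$, $\epsilon(-K) = -1$. The cable formula then gives $\tau(C_{p,1}(K)) = p$ and $\tau(C_{p,1}(-K)) = p(-1) + (p-1) = -1$. Combining the lower bound at $K$ with the pseudo-homomorphism identity and the lower bound at $-K$ yields
\[ p - b \leq \tau(P(K)) = -\tau(P(-K)) \leq -(-1 - b) = 1 + b, \]
which forces $b \geq (p-1)/2$ and contradicts the hypothesis $b < (p-1)/2$.

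For $p = 1$ and $p = 0$ the hypothesis is the stronger $b = 0$, so the bounds reduce to $\tau(P(K)) \geq \tau(Q(K))$ and $\tau(P(-K)) \geq \tau(Q(-K))$. Taking $K = T_{2,3}$ again, the Mazur formula gives $\tau(\M(K)) = \tau(K) + 1 = 2$ (since $\tau(K) > 0$) and $\tau(\M(-K)) = \tau(-K) + 1 = 0$ (since $\epsilon(-K) = -1 \notin \{0,1\}$), while the Whitehead formula gives $\tau(\Wh(K)) = 1$ and $\tau(\Wh(-K)) = 0$. In either case $\tau(P(K)) > 0 \geq -\tau(P(-K))$, contradicting the pseudo-homomorphism identity.

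The argument is not really hard, so the main obstacle is only bookkeeping: keeping straight the direction of the crossing-change inequality (which makes the role of $a$ and $b$ asymmetric and explains why the hypothesis only needs to control $b$), the distinction between $-P$ and $-K$, and the source of the $K$--versus--$(-K)$ asymmetry in each prototype formula. That asymmetry, coming from the $\epsilon$-dependence for $C_{p,1}$ and $\M$ and the $\tau$-dependence for $\Wh$, is precisely the feature used earlier to show the prototype patterns themselves are not homomorphisms, and it is what lets the obstruction propagate to any pattern sufficiently close (in the crossing-change sense) to a prototype.
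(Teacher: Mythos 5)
Your proposal is correct and follows essentially the same route as the paper: apply the crossing-change inequality for $\tau$ inside the solid torus to compare $\tau(P(\pm T_{2,3}))$ with the prototype values $\tau(C_{p,1}(\pm T_{2,3}))$, $\tau(\M(\pm T_{2,3}))$, $\tau(\Wh(\pm T_{2,3}))$, and contradict the pseudo-homomorphism condition, which the paper phrases equivalently as $\tau(P(T_{2,3}) \# P(-T_{2,3})) > 0$. Your write-up simply fills in the bookkeeping (the $a$/$b$ bounds and the $\epsilon$-asymmetry of the prototype formulas) that the paper's one-line proof leaves implicit.
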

\begin{proof}
Any of the above conditions implies that $\tau(P(T_{2,3}) \# P(-T_{2,3})) >0$, since if $K_+$ and $K_-$ differ by changing a single crossing from $(+)$ to $(-)$ then 
\[ \tau(K_-) \leq \tau(K_+) \leq \tau(K_-)+1. \qedhere\]
\end{proof}
We remark that Proposition~\ref{prop:tauhom} along with the crossing change inequality for $\tau$ implies that if $P$ can be changed to $C_{p,1}$ with any number of crossing changes  $(+)$ to $(-)$  and strictly fewer than $(p-1)$ crossing changes $(-)$ to $(+)$, then $P$ is not a homomorphism. We will see in Example~\ref{exl:L6a4} that this weaker assumption does not obstruct $P$ from inducing a pseudo-homomorphism. \\

The reference tables of LinkInfo~\cite{LinkInfo} give 30  prime 2-component links which have diagrams with no more than 8 crossings, considered independently of orientation and without considering mirror images. 
By picking an unknotted component $\eta$ of such a link, we obtain a pattern $P$ in the solid torus $S^3 \smallsetminus \nu(\eta)$. 
There are 19 choices which define a slice pattern, coming from 18 different links. The link L8a1 is asymmetric, as detected by the multivariable Alexander polynomial, and hence defines two patterns which we call L8a1a and L8a1b. 

 Two of these patterns are standard, as depicted in Figure~\ref{fig:concordanttocore}. 
\begin{figure}[h!]
  \includegraphics[height=2.5cm]{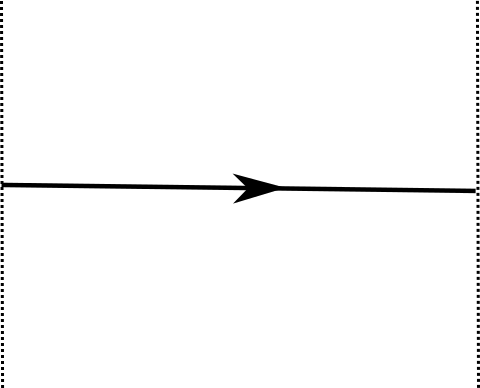} \quad \quad
  \includegraphics[height=2.5cm]{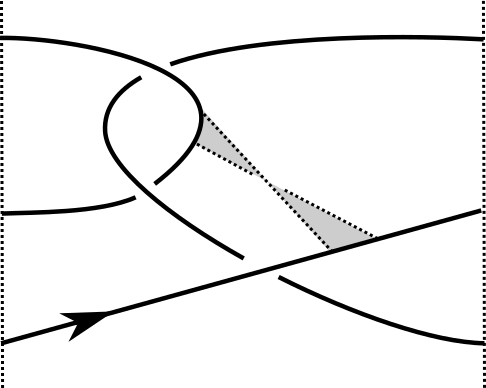}
  \caption{Small patterns concordant to a core: L2a1 (left) and L7a5 (right).}
  \label{fig:concordanttocore}
\end{figure}
(Note that in this section, for efficiency's sake we depict patterns as living in $D^2 \times I$. An untwisted identification of $D^2 \times  \{0\}$ with $D^2 \times \{1\}$ gives the pattern in the solid torus.) 
 
 Corollary~\ref{cor:pseudohom} immediately implies that 12 of the remaining 17 do not induce pseudo-homomorphisms: the necessary crossing changes are illustrated in Figure~\ref{fig:manyexl}.
\begin{figure}[h!]
$
\begin{array}{cccc}
  \includegraphics[height=2.5cm]{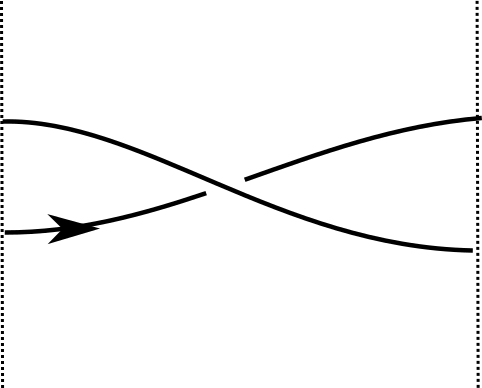} &  \includegraphics[height=2.5cm]{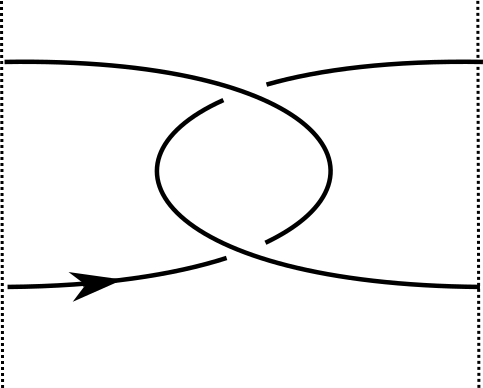} &  \includegraphics[height=2.5cm]{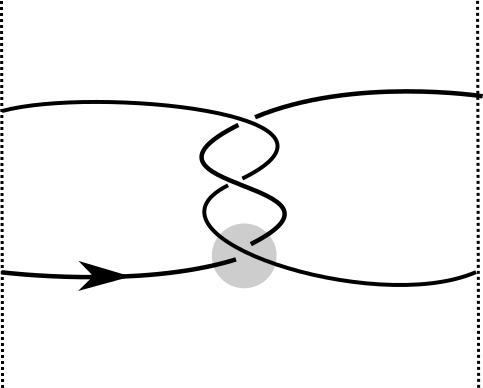}&   \includegraphics[height=2.5cm]{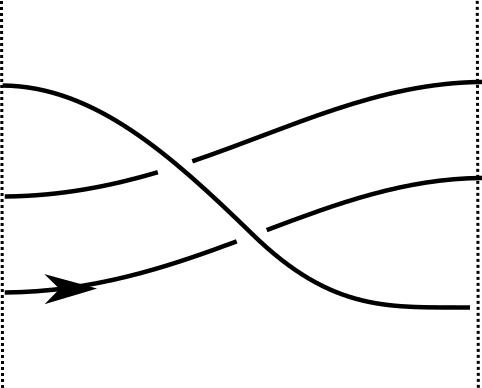}   \\
  \text{L4a1}=C_{2,1}&\text{L5a1=}\Wh & \text{L6a1}  & \text{L6a3}=C_{3,1}\\
    \includegraphics[height=2.5cm]{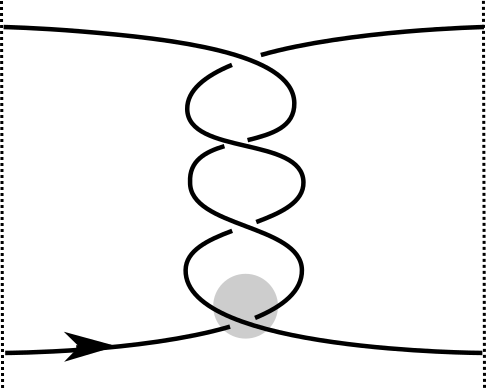}  &   \includegraphics[height=2.5cm]{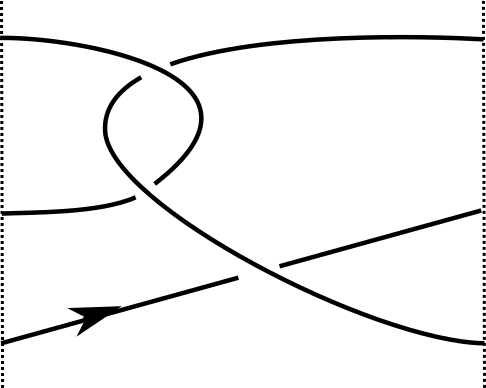} &
  \includegraphics[height=2.5cm]{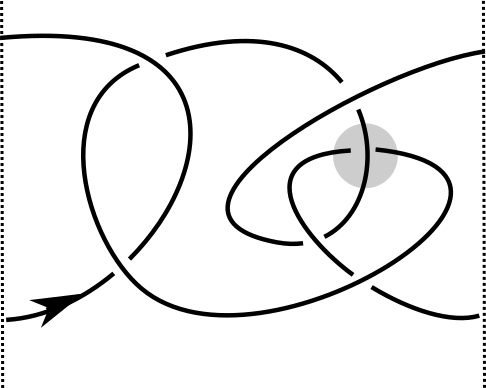}  &   \includegraphics[height=2.5cm]{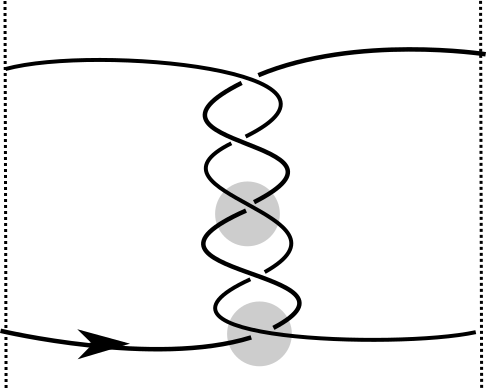}  \\
    \text{L7a4} & \text{L7a6}=\M & \text{L8a1a} & \text{L8a6}\\
      \includegraphics[height=2.5cm]{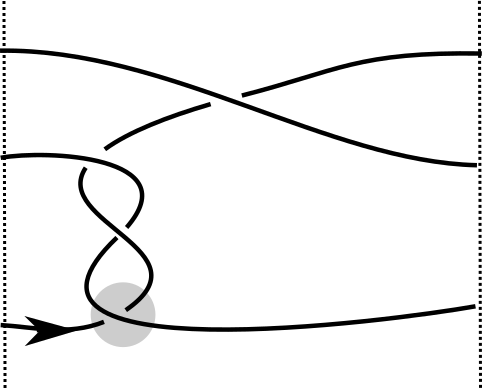} &   \includegraphics[height=2.5cm]{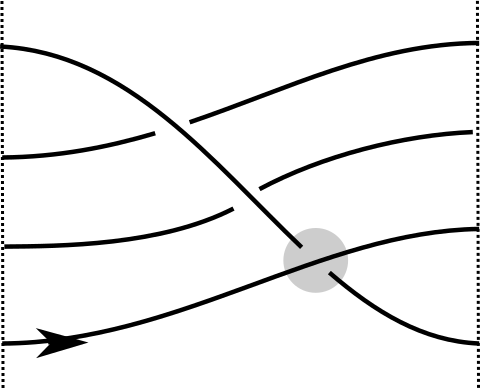} &   \includegraphics[height=2.5cm]{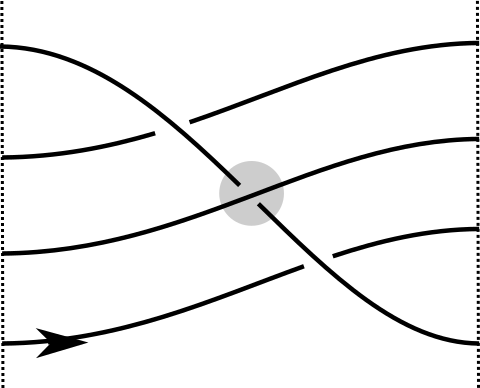} & \includegraphics[height=2.5cm]{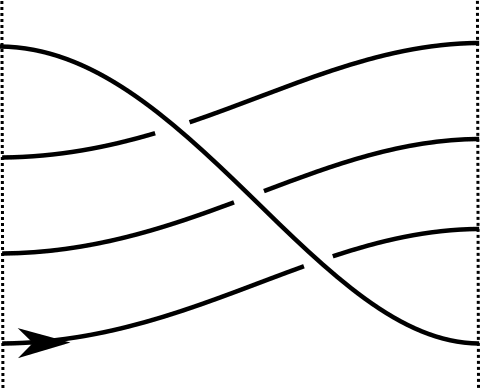}\\
      \text{L8a11} & \text{L8a12} & \text{L8a13} & \text{L8a14}=C_{4,1}
    \end{array} 
$
\caption{Small patterns satisfying the hypotheses of Corollary~\ref{cor:pseudohom}.}\label{fig:manyexl}
\end{figure}

This leaves us with 5 patterns to consider individually. We now give specific arguments to show that L8a1b, L8a8, and L8a10, depicted in  Figure~\ref{fig:special}, do not induce pseudo-homomorphisms. 
\begin{figure}[h!]
  \includegraphics[height=2.5cm]{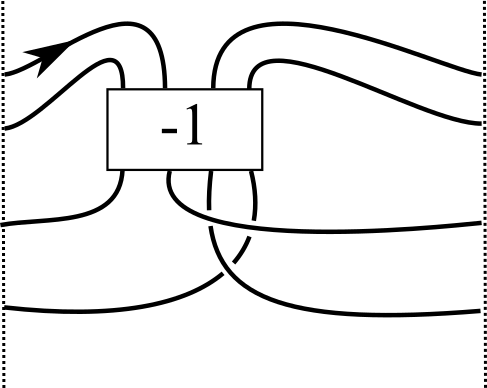} \quad \quad
  \includegraphics[height=2.5cm]{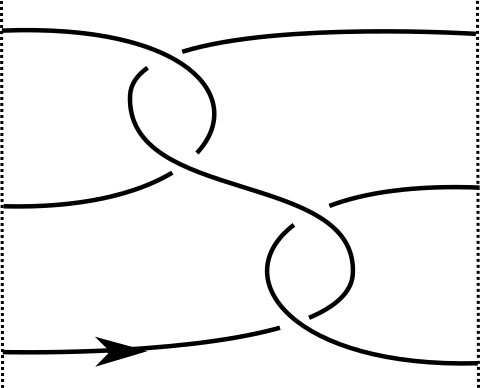}  \quad \quad
  \includegraphics[height=2.5cm]{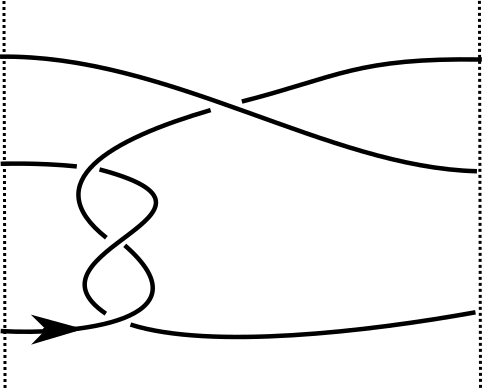}
  \caption{More small patterns: L8a1b (left), L8a8 (center), and L8a10 (right).}
  \label{fig:special}
\end{figure}

\begin{exl}[L8a1b does not induce a pseudo-homomorphism]
The crossing change inequality for $\tau$ generalizes to give the following result. (Note that one obtains a $(+)$ to $(-)$ crossing change by doing a $+1$ twist along a small linking number 0, geometric linking number 2 curve.)

\begin{prop}[\cite{OS03abs}]
Let $K$ be a knot in $S^3$ and $\eta$ be an unknot in the complement of $K$ such that $\lk(K, \eta)=0$. 
Let $K^+$ be the knot obtained from $K$ by doing a $+1$-twist along $\eta$. Then 
$\tau(K^+) \leq \tau(K) \leq \tau(K^+)+1.
$
\end{prop}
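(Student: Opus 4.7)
The plan is to realize the $+1$-twist as a Dehn surgery, construct a 4-dimensional cobordism from $K$ to $K^+$ inside a blown-up 4-ball, and apply an adjunction-type inequality for $\tau$.

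First, I would observe that since $\eta$ is unknotted, $(-1)$-surgery on $\eta \subset S^3$ yields $S^3$ again, and the image of $K$ under this surgery is precisely $K^+$. The trace of this surgery is a smooth cobordism $W$ from $S^3$ to $S^3$: namely $S^3 \times I$ with a single 2-handle attached to $\eta \times \{1\}$ with framing $-1$. Because $\eta$ is unknotted with negative framing, $W \cong \overline{\mathbb{CP}^2} \setminus (\mathring{B}^4 \sqcup \mathring{B}^4)$; in particular $b_2^+(W)=0$, and $H_2(W) \cong \mathbb{Z}$ is generated by a sphere $E$ with $E \cdot E = -1$. The product cylinder $A := K \times I$ is a smoothly embedded annulus in $W$ with $\partial A = -K \sqcup K^+$, and the hypothesis $\lk(K,\eta)=0$ forces $[A]\cdot E = 0$; via Poincar\'e--Lefschetz duality this means $[A]=0$ in $H_2(W,\partial W)$.

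Next, I would invoke the Ozsv\'ath--Szab\'o adjunction inequality for $\tau$ in negative-definite cobordisms: if $\Sigma$ is a smoothly, properly embedded, oriented, null-homologous genus-$g$ surface from $K_0$ to $K_1$ in a cobordism $W$ from $S^3$ to $S^3$ with $b_2^+(W)=0$, then $\tau(K_1) \leq \tau(K_0)+g$. Applied to the genus-zero annulus $A$, this immediately yields $\tau(K^+) \leq \tau(K)$. For the reverse bound, the orientation-reversed cobordism $\overline{W}$ has $b_2^+=1$, so this clean form of the inequality no longer applies. Instead, working inside $W$ itself, I would tube the annulus $A$ to the exceptional sphere $E$ along a short arc to obtain a genus-one surface $\Sigma'$ with $\partial \Sigma' = -K \sqcup K^+$ and $[\Sigma']=[E] \in H_2(W,\partial W)$. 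The general form of the $\tau$-adjunction inequality, incorporating the contribution of $[\Sigma]^2$ and of pairing with an appropriate $c_1$, then yields $\tau(K) \leq \tau(K^+) + 1$; the ``$+1$'' is exactly $-E\cdot E$.

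The main obstacle I foresee is invoking the precise generalized adjunction inequality in the final step: the null-homologous negative-definite case is textbook, but extracting the sharp ``$+1$'' from the tube-to-$E$ surface requires care with orientations and $\mathrm{spin}^c$-structures on $W$. A cleaner alternative would be to argue directly at the level of the knot Floer complex: the surgery exact triangle for $\eta$ produces a mapping cone relating $CFK^{\infty}(K)$ and $CFK^{\infty}(K^+)$, and the one-sided $\tau$-shift can then be read off from the asymmetry of that triangle, exactly matching the asymmetry between the $(-1)$-surgery and its mirror $(+1)$-surgery.
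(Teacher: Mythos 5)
The paper itself gives no proof of this proposition --- it is quoted from Ozsv\'ath--Szab\'o --- so your sketch should be measured against the standard argument for the cited result. Your first half is exactly that argument and is fine: realize the $+1$-twist as the trace $W$ of $(-1)$-surgery on $\eta$, note that $W$ is a negative-definite cobordism between two copies of $S^3$, that the track annulus $A=K\times I$ is null-homologous because $\lk(K,\eta)=0$, and apply the $\tau$ genus bound for null-homologous surfaces in negative-definite cobordisms to get $\tau(K^+)\le\tau(K)$.

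The second inequality is where there is a genuine gap, for two reasons. Geometrically, tubing the annulus to $E$ along an arc does not give a genus-one surface: connected sum with a sphere adds no genus, and in fact $A$ and $E$ cannot in general be made disjoint at all --- they meet in as many points as $K$ meets the twisting disk, a number that is only algebraically zero --- so building an embedded surface in class $[E]$ forces you to resolve those intersections, at a genus cost that grows with the geometric intersection number rather than being the advertised $-E\cdot E$. More fundamentally, the direction of the inequality is wrong for your strategy: every adjunction-type bound for $\tau$ coming from a surface in $W$, whatever its class, genus, or $\mathrm{spin}^c$ decoration, bounds $\tau$ of the \emph{outgoing} knot $K^+$ in terms of $\tau$ of the incoming knot $K$, because the inequality follows the direction of the cobordism map. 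No choice of surface inside $W$ can output $\tau(K)\le\tau(K^+)+1$; for that you must traverse the cobordism backwards, and, as you yourself note, the reversed cobordism has $b_2^+=1$, precisely where this machinery gives nothing. (When $\eta$ bounds a disk meeting $K$ in only two points the missing inequality is cheap, since then $K$ and $K^+$ differ by a single crossing change and $g_4(K\#-K^+)\le 1$; but for a general twisting circle the sharp $+1$ really does require analyzing the maps induced by the $\pm1$-framed two-handle attachment on the filtered complexes --- the mapping-cone style argument you only gesture at in your final sentence, which would need to be carried out to complete the proof.)
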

Now, observe there is a $+1$-twist along a linking number 0 unknot that takes the pattern L8a1b to the positive Whitehead pattern, as illustrated in Figure~\ref{fig:L8a1b}.
\begin{figure}[h!]
  \includegraphics[height=2.5cm]{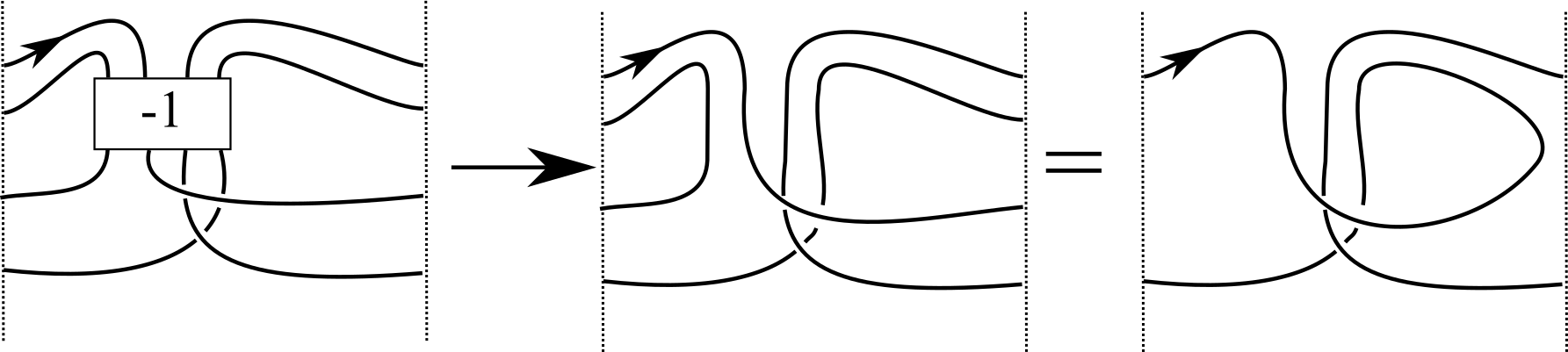}
  \caption{Twisting L8a1b (left) to $\Wh$ (right).}
  \label{fig:L8a1b}
\end{figure}
It follows that for any $K$,  $\tau(\text{L8a1b(K)}) \geq \tau(\Wh(K))$, and so the arguments of the proof of Corollary~\ref{cor:pseudohom} apply to show that L8a1b does not induce a pseudo-homomorphism
\end{exl}

\begin{exl}[L8a8 does not induce a pseudo-homomorphism]\label{exl:8a8}
Since a single (+) to (-) crossing change takes $P=\text{L8a8}$ to a core, we immediately have that $\tau(P(K)) \geq \tau(K)$ for all knots $K$. 
We will now show that this is not always equality, and therefore that for some knot $J$
\[ \tau(P(J) \# P(-J))= \tau(P(J)) + \tau(P(-J)) \geq \tau(P(J)) + \tau(-J)> \tau(J)+ \tau(-J)=0.\]

Observe that L8a8 has a Legendrian diagram (on the left of Figure~\ref{fig:L8a8}) with Thurston-Bennequin number and rotation number equal to 
\begin{align*}
tb(\mathcal{P})&= \text{writhe}(\mathcal{P})- \#(\text{right cusps})=4-2=2.\\
rot(\mathcal{P})&=\frac{ \#(\text{down cusps})- \#(\text{up cusps})}{2}=\frac{2-2}{2}=0.
\end{align*}

There is also a Legendrian diagram  for $J= T_{2,3}$ with 
\begin{align*}
tb(\mathcal{J})&= \text{writhe}(\mathcal{J})- \#(\text{right cusps})=3-3=0.\\
rot(\mathcal{J})&=\frac{ \#(\text{down cusps})- \#(\text{up cusps})}{2}=\frac{4-2}{2}=1.
\end{align*}

\begin{figure}[h!]
  \includegraphics[height=2.5cm]{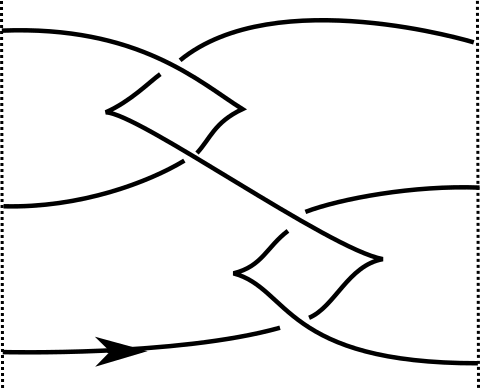} \qquad \qquad
    \includegraphics[height=2.5cm]{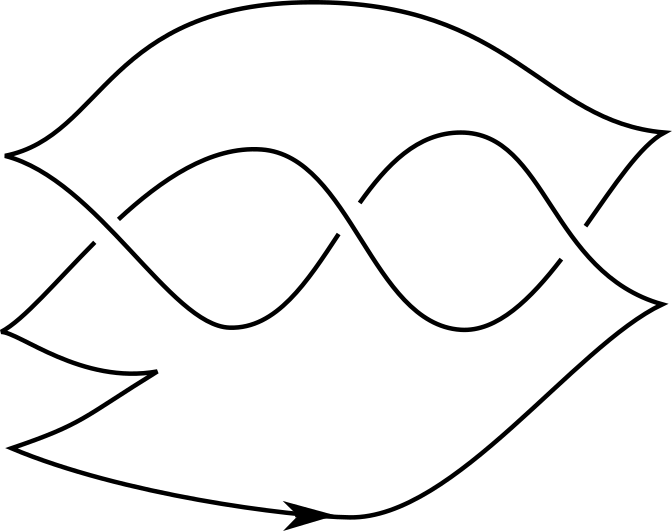}
  \caption{Legendrian diagrams of L8a8 (left) and a right-handed trefoil $J$ (right).}
  \label{fig:L8a8}
\end{figure}

As shown by Ng-Traynor~\cite{NgTraynor}, since $tb(\mathcal{J})=0$, we obtain a Legendrian diagram $\mathcal{P(J)}$ for $P(J)$ with 
\begin{align*}
tb(\mathcal{P(J)})&= w(P)^2 tb(\mathcal{J}) + tb(\mathcal{P})
= (1)^2 \cdot 0 + 2=2\\
rot(\mathcal{P(J)})&=w(P) rot(\mathcal{J})+ rot(\mathcal{P})
=1 \cdot 1+0=1.
\end{align*}
We now apply the following result of Plamenevskaya. 
\begin{thm}[\cite{Plam-tbtau}]\label{thm:tbtau}
Let $\mathcal{K}$ be a Legendrian representative of $K$. 
Then
\[tb(\mathcal{K})+|rot(\mathcal{K}| \leq 2 \tau(K)-1. 
\]
\end{thm}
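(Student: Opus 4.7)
The plan is to deduce the Legendrian bound from a sharper bound on transverse knots, which in turn comes from Plamenevskaya's transverse invariant in knot Floer homology together with the filtration characterization of $\tau$.

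First, I would pass from Legendrian to transverse. Given the Legendrian knot $\mathcal{K}$, form its positive and negative transverse push-offs $T_{\pm}$, which are transverse representatives of $K$ in $(S^3, \xi_{std})$ satisfying $sl(T_{\pm}) = tb(\mathcal{K}) \mp rot(\mathcal{K})$. It therefore suffices to prove the transverse version
\[
sl(T) \;\leq\; 2\tau(K) - 1
\]
for any transverse representative $T$ of $K$; applying this to both $T_{+}$ and $T_{-}$ and combining yields $tb(\mathcal{K}) + |rot(\mathcal{K})| \leq 2\tau(K) - 1$.

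Second, I would construct a distinguished cycle $\hat{\theta}(T) \in \widehat{CFK}(-S^3, K)$ associated to the transverse knot $T$. Using an open book decomposition of $(S^3, \xi_{std})$ compatible with $T$ (i.e.\ one in which $T$ sits as a braid about the binding), one builds a Heegaard diagram adapted to $T$ and singles out a specific intersection point. The essential computation I would carry out is that the Alexander grading of this cycle equals $\tfrac{sl(T)+1}{2}$, which is a direct translation of the Bennequin-type count for the self-linking number into the grading formula for knot Floer generators.

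Third, I would establish the non-vanishing statement: the image of $\hat{\theta}(T)$ under the natural map $\widehat{HFK}(-S^3, K) \to \widehat{HF}(-S^3) \cong \mathbb{F}$ is non-zero. This is the main obstacle and is where the contact-geometric input enters. The idea is to identify this image with the Ozsv\'ath--Szab\'o contact invariant of $(S^3, \xi_{std})$, which is known to be non-trivial. Equivalently, one can verify that the chosen generator cannot be a boundary by inspecting the holomorphic disks in the adapted diagram, since any such disk would have to cross a basepoint forced to lie outside the relevant region.

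Finally, I would invoke the filtration description of $\tau$: up to sign conventions, $\tau(K)$ is the minimal Alexander grading at which a cycle of $\widehat{CFK}(-S^3, K)$ survives to generate $\widehat{HF}(-S^3)$. Since $\hat{\theta}(T)$ survives and lives in grading $\tfrac{sl(T)+1}{2}$, this grading must be at most $\tau(K)$, giving $sl(T) \leq 2\tau(K) - 1$ as required. The hard step is the non-vanishing; the grading computation and the Legendrian-to-transverse reduction are essentially bookkeeping once that is in hand.
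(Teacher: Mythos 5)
You should first note that the paper does not prove this statement at all---it is imported verbatim from Plamenevskaya \cite{Plam-tbtau}---so the comparison is with her original argument, and your route is genuinely different from it. Plamenevskaya's proof is four-dimensional and uses no transverse invariant in Floer homology (none existed at the time): she passes to the transverse push-off, presents it as a closed braid, and, following Rudolph's proof of the slice--Bennequin inequality, produces from the braid data a surface in a blown-up four-ball with boundary $K$; the bound then follows from Ozsv\'ath--Szab\'o's adjunction-style inequality for $\tau$ for surfaces in such negative-definite pieces. Your plan---build a transverse invariant $\hat\theta(T)\in\widehat{HFK}(-S^3,K)$ from a braid/open book presentation, compute its Alexander grading in terms of $\operatorname{sl}(T)$, show its image in $\widehat{HF}(-S^3)$ is the nonvanishing contact class of the standard tight structure, and conclude from the filtration description of $\tau$---is essentially how the bound was later reproved (the grid invariants of Ozsv\'ath--Szab\'o--Thurston, the Lisca--Ozsv\'ath--Stipsicz--Szab\'o invariant, Baldwin--Vela-Vick--V\'ertesi). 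What Plamenevskaya's approach buys is economy: it needs only $\tau$'s adjunction property, which was already available. What yours buys is a conceptual statement ("the transverse invariant maps to the contact class"), but each of your middle steps is itself a substantial theorem postdating the result, so as a proof it is an outline resting on heavier machinery; also note that there is no natural map from all of $\widehat{HFK}(-S^3,K)$ to $\widehat{HF}(-S^3)$---the map exists because the invariant is represented by a cycle in an extreme filtration-level subcomplex of the filtered complex.

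There is also one point where your bookkeeping, taken literally, reverses the inequality. You characterize $\tau(K)$ as the \emph{minimal} Alexander grading at which a cycle of $\widehat{CFK}(-S^3,K)$ survives to generate $\widehat{HF}(-S^3)$; but then a surviving cycle in grading $(\operatorname{sl}(T)+1)/2$ would give $\tau(K)\le(\operatorname{sl}(T)+1)/2$, i.e.\ $\operatorname{sl}(T)\ge 2\tau(K)-1$, the opposite of what is wanted. The correct mechanism (the same one behind Hedden's ``$c(\xi)\neq 0$ implies $\tau=g$'' for fibered knots) is: a cycle lying in filtration level $a$ of the $K$-filtered complex $\widehat{CF}(-S^3)$ whose image generates $\widehat{HF}(-S^3)$ certifies $\tau(m(K))\le a$, equivalently $\tau(K)\ge -a$, because dually such a cycle pairs nontrivially with the generator of $\widehat{HF}(S^3)$ while annihilating all filtration levels of the complex for $K\subset S^3$ below $-a$. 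So to obtain $\tau(K)\ge(\operatorname{sl}(T)+1)/2$ you need the invariant to sit in level $-(\operatorname{sl}(T)+1)/2$ in this convention, or equivalently you must state the grading formula and the filtration characterization for the mirror consistently. This is precisely the ``sign conventions'' you deferred, but since the inequality flips if your two stated conventions are combined as written, it must be pinned down; once it is, the strategy does go through.
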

So for $J=T_{2,3}$ we have 
\[\tau(P(J)) \geq (1/2)(tb(\mathcal{P(J)})+ rot(\mathcal{P(J)})+1)= (1/2)(2+1+1)=2 >1= \tau(J). 
\]
\end{exl}

\begin{exl}[L8a10 does not induce a pseudo-homomorphism]
Let $K=T_{2,3}$. We will use the alternate definition of pseudo-homomorphism and show that L8a10$(K)$ and $(-\text{L8a10})(K)$ are not concordant. 
Since a single $(-)$ to $(+)$ crossing change takes L8a10 to L6a4, we have that 
\[ \tau(\text{L8a10}(T_{2,3})) \leq \tau(\text{L6a4}(T_{2,3})) \leq 3 \tau(T_{2,3})-1=2.\]
\begin{figure}[h!]
  \includegraphics[height=2.5cm]{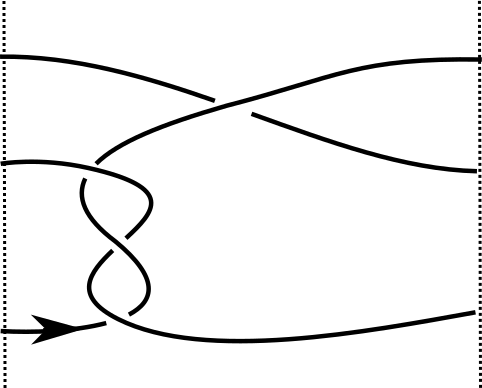} \quad \quad
  \includegraphics[height=2.5cm]{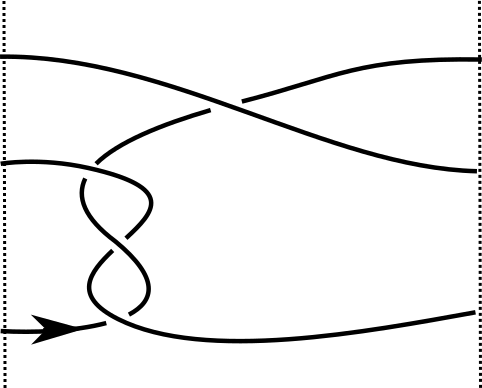} \quad \quad
  \includegraphics[height=2.5cm]{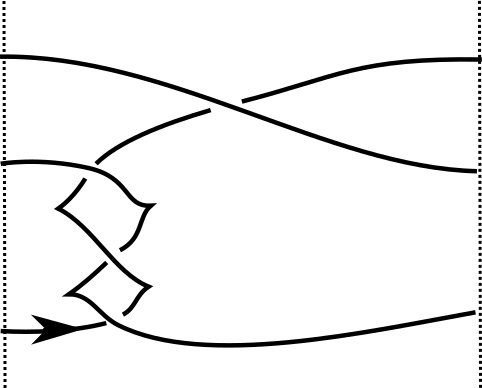}
  \label{fig:8a10}
  \caption{The patterns $-$L8a10 (left) and $R$ (center), and a Legendrian realization $\mathcal{R}$ of $R$  with $tb(\mathcal{R})=2$ and $rot(\mathcal{R})=0$ (right).}
\end{figure}
A single $(+)$ to $(-)$ crossing change takes $-$L8a10 to the pattern $R$, as depicted in Figure~\ref{fig:8a10}. It follows that 
$\tau(-\text{L8a10}(K)) \geq  \tau(R(K).$
We now argue as in Example~\ref{exl:8a8}, using the Legendrian realization $\mathcal{R}$ of $R$ on the right of Figure~\ref{fig:8a10} to say that there is a Legendrian diagram $\mathcal{R(K)}$ for $R(K)$ with 
\begin{align*}
tb(\mathcal{R(K)}) =3^2 \cdot 0 +2=2 \text{ and }
rot(\mathcal{R(K)}) = 3 \cdot 1+0=3.
\end{align*}
It therefore follows by Theorem~\ref{thm:tbtau} that 
\[\tau(-\text{L8a10}(K)) \geq \tau(R(K)) \geq \frac{2+ 3 +1}{2}=3.
\]
So $\text{L8a10}(K)$ and $(-\text{L8a10})(K)$ are not concordant and L8a10 does not induce a pseudo-homomorphism. 
\end{exl}

The two remaining patterns are L6a2 and L8a9, both of which can be easily seen to induce pseudo-homomorphisms, since each is slice and amphichiral. 
\begin{figure}[h!]
$\begin{array}{cc}
  \includegraphics[height=2.5cm]{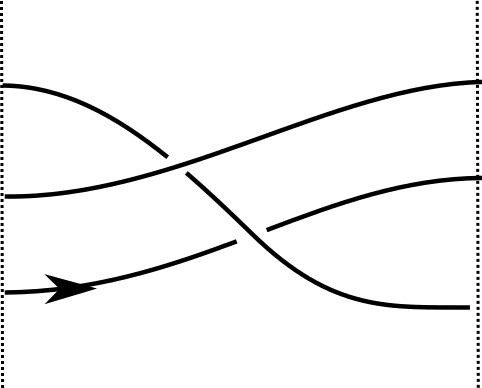} &   \includegraphics[height=2.5cm]{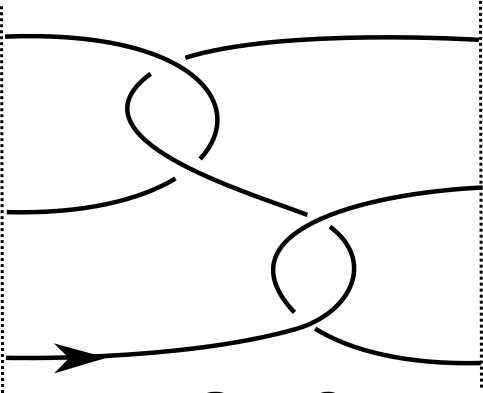}
  \end{array}$
  \caption{L6a2 (left) and L8a9 (right) induce pseudo-homomorphisms.}
  \label{fig:6a4}
\end{figure}

\begin{exl}[L6a2 induces a pseudo-homomorphism but not a homomorphism]\label{exl:L6a4}

 L6a4 induces a pseudo-homomorphism since L6a2$(U)$ is slice and the pattern L6a4 is isotopic to -L6a4. 
Now let $K$ be a knot with $\epsilon(K)=+1$.
Since a single crossing change $(+)$ to $(-)$ takes L6a2 to $C_{3,-1}$, we have that 
\[\tau(\text{L6a4}(K)) \leq \tau(C_{3,-1})(K)+1= (3\tau(K) -2) +1 = 3 \tau(K)-1< 3 \tau(K).\]
So Proposition~\ref{prop:tauhom} implies that L6a2 does not induce a homomorphism.

\end{exl}

It is not hard  to generalize L6a2 to produce patterns of each odd winding number which induce pseudo-homomorphisms yet not homomorphisms of $\C_s$.  This leads us to the following question about the existence of `non-standard pseudo-homomorphisms'.
We remark that this question also relates to whether all torsion elements of the concordance group are represented by negative amphichiral knots (Question 1.94, \cite{Kirby78}.) 

\begin{quest}\label{quest:pseudohom}
Let $P$ be a pattern inducing a pseudo-homomorphism on the concordance group. 
Must  $P$ be concordant in $(S^1 \times D^2) \times I$ to a pattern $Q$ with the property that $Q(-K)$ is isotopic to $-Q(K)$ for all $K$? \end{quest}

By work of Hartley~\cite{Hartley}, the winding number of a pattern with $P$ isotopic to $-P$ must either be 0 or odd. Since winding number is a concordance invariant of patterns,  an affirmative answer to Question~\ref{quest:pseudohom} would imply Conjecture~\ref{conj:nohom} for patterns of non-zero even winding number.

\begin{exl}[The pattern induced by L8a9]\label{exl:8a9}
We are left to consider $P$=L8a9.  Since $P=-P$ we see that this pattern induces a pseudo-homomorphism.
However, $P(K)$ and $K$ are very difficult to distinguish: in particular, since either a  $(+)$ to $(-)$ or a $(-)$ to $(+)$ crossing change takes $P$ to a core, we have that $\tau(P(K))= \tau(K)$ for all knots $K$.  One can also check that Rasmussen's $s$-invariant and many other smooth concordance invariants are similarly incapable of showing that $K$ and $P(K)$ are not concordant. 
However, it is straightforward to verify that $P_{+1}(U)$, the knot in $S^3$ obtained by doing a $+1$ twist along the meridian of the solid torus that $P$ lies within, is not even topologically slice and so that $P$ is not concordant to a core.

We are therefore left with the following questions: does  L8a9 act by the identity and, if not, does it induce a non-standard homomorphism of $\C_s$?
\end{exl}

\bibliography{research}

\bibliographystyle{alpha}
\end{document}